\newcommand{\mb}{\mathbb}
\newcommand{\mc}{\mathcal}
\newcommand{\Value}[2]{V_{#1}^{#2}} % #1 is the policy, #2 is the state
\newcommand{\ValRob}[1]{V_{#1}^\star} % #1 is the policy
\newtheorem{definition}{Definition}
\newtheorem{corollary}{Corollary}
\newtheorem{proposition}{Proposition}
\newtheorem{assumption}{Assumption}
\newtheorem{remark}{Remark}
\newtheorem{theorem}{Theorem}
\title{Bayesian Ambiguity Contraction-based Adaptive Robust Markov Decision Processes for Adversarial Surveillance Missions\footnote{A preliminary version of this paper titled "Adaptive Robust Markov Decision Process for Wide-Area Surveillance with Collaborative Combat Aircraft" (Control ID: 4355104) will be presented at the 2026 AIAA SciTech Forum, Orlando, FL, January 12-16, 2026 \cite{previous_paper}.}}
\author{Jimin Choi\footnote{Ph.D. Student, Department of Aerospace Engineering, AIAA Student Member} and Max Z. Li\footnote{Assistant Professor, Department of Aerospace Engineering, Department of Civil and Environmental Engineering, Department of Industrial and Operations Engineering, AIAA Member}}
\affil{University of Michigan, Ann Arbor, MI 48109, USA}
\begin{document}

\maketitle
\setcounter{footnote}{0}
\renewcommand{\thefootnote}{\arabic{footnote}}
\begin{abstract}
Collaborative Combat Aircraft (CCAs) are envisioned to enable autonomous Intelligence, Surveillance, and Reconnaissance (ISR) missions in contested environments, where adversaries may act strategically to deceive or evade detection. These missions pose challenges due to model uncertainty and the need for safe, real-time decision-making. Robust Markov Decision Processes (RMDPs) provide worst-case guarantees but are limited by static ambiguity sets that capture initial uncertainty without adapting to new observations. This paper presents an \emph{adaptive} RMDP framework tailored to ISR missions with CCAs. We introduce a mission-specific formulation in which aircraft alternate between movement and sensing states. Adversarial tactics are modeled as a finite set of transition kernels, each capturing assumptions about how adversarial sensing or environmental conditions affect rewards. Our approach incrementally refines policies by eliminating inconsistent threat models, allowing agents to shift from conservative to aggressive behaviors while maintaining robustness. We provide theoretical guarantees showing that the adaptive planner converges as credible sets contract to the true threat and maintains safety under uncertainty. Experiments under Gaussian and non-Gaussian threat models across diverse network topologies show higher mission rewards and fewer exposure events compared to nominal and static robust planners.
\end{abstract}

\section*{Nomenclature}
\renewcommand\arraystretch{1.0}
\begin{longtable*}{@{}l @{\quad=\quad} l@{}}
$\mathcal S$ & Node set in the ISR graph \\
$\mathcal N(v)$ & Neighbor set of node v \\
$\mathcal S'$ & Two phase state space $(v,\mathrm S),(v,\mathrm M)$ \\
$\Theta$ & Threat type set \\
$U_v^t$ & Credible threat set at node v \\
$U_{s,a}^t$ & Local ambiguity set for $(s,a)$ \\
$b_{t,v}$ & Posterior belief at node v \\
$o_{v,t}, z_{v,t}, \eta_{v,t}$ & Observation, exposure score, exposure flag \\
$e_t, d_t$ & Persistent and cumulative exposure states \\
$n_t(v)$ & Novelty measure at node v \\
$\gamma$ & Discount factor \\
$\alpha, \beta$ & Exposure decay and novelty decay \\
$c_{\mathrm{sense}}, c_{\mathrm{move}}$ & Sensing and movement costs \\
$\lambda_{\mathrm{imm}}, \lambda_{\mathrm{pers}}, \lambda_{\mathrm{cum}}, \lambda_{\mathrm{nov}}$ 
& Reward weights \\
$p(o \mid a, \theta)$ & Observation distribution \\
$q(z \mid a, \theta)$ & Exposure score distribution \\
$p_{\mathrm{exp}}(a,\theta)$ & Exposure probability \\
$\phi_{s,a}$ & Map from threat type to transition kernel \\
$r_{\mathrm{sense}}, r_{\mathrm{move}}$ & Sensing and movement rewards \\
$V$ & Value function \\
$\pi_t$ & Policy at time $t$ \\
$\mathcal T_{\mathrm r}, \mathcal T_{\mathrm n}$ & Robust and nominal Bellman operators \\
\end{longtable*}

\section{Introduction}
Collaborative Combat Aircraft (CCA) has been widely proposed as a key enabler in future air combat and surveillance operations, especially in contested or communication-denied environments \cite{Gunzinger2024CCA,Penney2022CCA}. These platforms are expected to function autonomously or semi-autonomously, collaborating with other assets to perform complex tasks such as electronic warfare, precision targeting, and wide-area Intelligence, Surveillance, and Reconnaissance (ISR). This vision has been realized in the U.S. Air Force's Next-Generation Air Dominance (NGAD) program, which positions CCAs as uncrewed force multipliers capable of performing distributed, high-risk missions in dynamic adversarial settings \cite{CRS2024CCA}.

ISR has received growing attention among these roles due to its potential for distributed sensing and persistent situational awareness under limited centralized coordination. However, this optimistic outlook on autonomous ISR often underestimates the adversarial and deceptive nature of, and within real-world environments. Adversarial threat actors may employ mobile surface-to-air missile systems, intermittently activate radar sensors, or emit false signals to obscure their true positions and capabilities \cite{Adamy2006EW}. These behaviors challenge common and critical assumptions in autonomous planning: the availability of accurate models and stable dynamics. In such settings, ISR platforms operate under partial observability and epistemic uncertainty about adversary behavior and environmental transitions.

At its core, this challenge represents a sequential decision-making problem under model uncertainty. Reinforcement learning (RL) has been widely explored as a solution, offering adaptability and the ability to generalize through data-driven policies. There has been increasing interest in using RL in aerospace applications such as autonomous navigation, flight control, and multi-agent mission planning \cite{MA2025103342, doi:10.2514/6.2025-1933,10.1145/3301273}. However, RL methods typically require extensive exploration and lack safety guarantees, making them impractical for safety-critical missions \cite{kirschner2020distributionally}. As an alternative, Robust Markov Decision Processes (RMDPs) offer worst-case performance guarantees by optimizing over predefined ambiguity sets of transition models \cite{iyengar2005robust,ref:Nilim-05,wiesemann2013robust}. Nonetheless, conventional RMDP approaches rely on static ambiguity sets and do not incorporate new information gathered during mission execution, limiting their efficiency in dynamic or adversarial environments. In practice, when conducting ISR missions, as agents observe how threats sense, react, or reveal themselves, their understanding of the underlying adversarial model evolves. Reducing this uncertainty requires exploratory actions that may increase exposure, whereas acting conservatively limits the ability to refine the threat model.

This dynamic tension naturally gives rise to the exploration–exploitation dilemma specific to ISR missions, where agents must balance improving their understanding of adversarial behavior with maintaining safety and mission effectiveness. Recent studies on autonomous vehicle coordination in hazardous or uncertain domains have demonstrated the benefits of explicitly addressing this trade-off through uncertainty-aware and adaptive decision-making frameworks \cite{bilevel2025,uavugv2025bandit,airground2025}. Although prior works offer practical strategies for handling exploration and risk, they do not explicitly formalize this trade-off within a robust sequential decision-making framework appropriate for ISR missions. Motivated by this gap, we propose an ISR mission–tailored adaptive RMDP framework that builds on the online robust planning formulation in \cite{sun2025online} and naturally balances exploration and exploitation through uncertainty-aware policy adaptation. In doing so, agents reason over structured adversarial behaviors and dynamically adjust their strategies to maintain safety and coverage in contested environments. 

The remainder of the manuscript is organized as follows. \Cref{sec:literature} provides the literature review and \Cref{sec:preliminaries} presents the relevant background. \Cref{sec:problem_statement} provides a detailed description of the mission setting and adaptive ISR mission strategy. \Cref{sec:methodology} introduces the adaptive robust planning framework, including the graph RMDP, Bayesian belief updates, and ambiguity contraction. Theoretical guarantees on convergence, safety, and asymptotic optimality are presented in \Cref{sec:theory}. Simulation results across varying threat models and graph structures are shown in \Cref{sec:experiments}. \Cref{sec:conclusion} summarizes findings and outlines directions for future work.

\section{Literature Review}\label{sec:literature}
\subsection{Evolving ISR Missions under Uncertainty}
ISR missions play a crucial role in maintaining tactical superiority in modern operations \cite{Chizek2003,Smagh2020}. In distributed missions with limited communications or in contested environments where electronic warfare is active, ISR platforms are expected to operate with a high level of autonomy and coordination \cite{AFDN25-1}. Recent studies have increasingly focused on enhancing ISR effectiveness through adaptive and data-driven sensing with uncrewed aerial vehicles (UAVs). Learning-based approaches enable autonomous ISR systems to prioritize sensing tasks, allocate resources dynamically, and adapt mission objectives in response to evolving operational conditions \cite{10.1117/12.2619117}. In parallel, adaptive control frameworks are designed to sustain persistent surveillance under changing mission and environmental conditions, allowing multi-swarm UAV systems to reconfigure their coverage and maintain situational awareness over time \cite{persistent2019}. These advances highlight the growing need for ISR systems that can make decisions and adapt autonomously amid uncertain threats and rapidly changing battle conditions.

Among these capabilities, persistent surveillance forms the core function of ISR operations, ensuring continuous situational awareness across time and space \cite{4526242}. Achieving this persistence requires sustained coordination among diverse heterogeneous sensing assets. UAVs provide wide-area awareness but often lose visibility behind buildings or terrain, while uncrewed ground vehicles (UGVs) provide local ground-level sensing in those areas \citep{drones6040094}. Recent studies have introduced cooperative UAV–UGV frameworks and energy-aware path planning methods to enhance mission endurance and maintain continuous surveillance in uncertain environments \citep{9222146,9697374}. Despite advances in multi-agent routing and coverage optimization, surveillance missions remain fundamentally constrained by environmental uncertainty and adversarial disruptions that threaten the continuity of observation \cite{Egorov_Kochenderfer_Uudmae_2016}. Addressing this challenge requires robust decision-making frameworks that sustain persistent, reliable surveillance despite uncertainty.

\subsection{Applications of MDPs in Aviation}
Decision-making in aviation involves sequential reasoning under uncertainty, and the MDP has become a fundamental framework for modeling such problems. MDP-based approaches have been applied across various aviation domains. For instance, MDP and partially observable MDP (POMDP) formulations have been employed to design automated collision avoidance systems that optimize between safety and flight efficiency \cite{doi:10.2514/6.2010-8040, 6301081, 7991332}. In distributed ATM, multi-agent MDPs are used to coordinate advisories and resolve drone conflicts efficiently in real time \cite{doi:10.2514/1.G001822, 9594329,8900719}. Moreover, MDP-based frameworks support flight safety management systems that detect and mitigate loss-of-control scenarios during critical phases, such as takeoff \cite{doi:10.2514/1.G001743}, as well as aircraft routing strategy that optimizes trajectories under dynamic constraints \cite{tb_atm}. 

Despite their wide range of applications, conventional MDP-based models typically rely on known, stationary transition probabilities, an assumption that rarely holds in real-world aviation settings \cite{Suilen2025}. When the true system behavior diverges from these nominal models, policy performance can degrade or even become unstable \cite{pmlr-v162-wang22at}. To address this issue, RMDP formulations explicitly capture uncertainty in the transition dynamics and optimize decision policies that remain effective under worst-case realizations \cite{ref:Nilim-05}. Such robustness represents a promising direction for safety-critical aviation missions, where reliable decisions in dynamic environments are essential for mission success and operational safety.

\subsection{Developments in RMDPs}
In practice, the transition kernel of a Markov decision process is rarely known exactly, and policies optimized under estimated dynamics often exhibit substantial performance degradation when deployed out of sample~\citep{wiesemann2013robust}. To mitigate this sensitivity, the RMDP framework has emerged as a principled approach to improve generalization performance across uncertain environments~\citep{ref:Nilim-05,iyengar2005robust,le2007robust}. Most existing studies on RMDPs focus on the offline setting, where the decision maker seeks to infer an optimal policy from a fixed dataset of trajectories generated by an unknown behavior policy, without further interaction with the environment~\citep{li2025towards}. In this regime, pessimistic value estimation has proven to yield statistically efficient algorithms for offline reinforcement learning~\citep{blanchet2023double}. One study introduces an online robust planning approach that achieves sublinear regret by incorporating optimism into the policy update under rectangular uncertainty, marking an early effort toward adaptive robust decision-making \cite{sun2025online}.

However, when data are collected online, the distribution generating the data typically differs from the distribution that determines policy performance—a discrepancy known as the sim-to-real gap in reinforcement learning~\citep{9308468}. Addressing this gap requires algorithms that explicitly trade off exploration and robustness to uncertainty in the transition kernel. While recent studies have begun to extend the RMDP framework to online learning, these methods either rely on an alternative notion of robust regret~\citep{10.5555/3702676.3702729} or require a technical loop-free assumption on the structure of the MDP~\citep{sun2025online}. To summarize, the loop-free assumption excludes cyclic dynamics that arise naturally in most practical Markov decision processes. A key open challenge lies in designing algorithms that adaptively learn policies under evolving uncertainty while ensuring sublinear regret with respect to the true data-generating process.

\section{Preliminaries}\label{sec:preliminaries}
We consider an infinite-horizon Markov Decision Process (MDP) given by a six-tuple $\left(\mathcal{S}, \mathcal{A}, P, r, \gamma, s_0\right)$ comprising of a finite state space $\mathcal{S}=\{1,\ldots,S\}$,  a finite action space $\mathcal{A}=\{1,\ldots,A\}$, a transition kernel $P: \mathcal{S} \times \mathcal{A} \rightarrow \Delta(\mathcal{S})$, a reward-per-stage function $r: \mathcal{S} \times \mathcal{A} \rightarrow \mathbb{R}$, a discount factor $\gamma\in(0,1)$, and an initial state $s_0\in\mc S$. 
Note that $(\mathcal{S}, \mathcal{A}, P, r, \gamma, s_0)$ describes a controlled discrete-time stochastic system, where the state at time~$t$ and the action applied at time~$t$ are denoted as random variables $S_t$ and $A_t$, respectively. 
If the system is in state $s_t\in\mc S$ at time $t$ and action $a_t\in\mc A$ is applied, then an immediate reward $r(s_t,a_t)$ is incurred, and the system moves to state $s_{t+1}$ at time $t+1$ with probability $P(s_{t+1}|s_t,a_t)$. We denote the underlying probability distribution over all of the possible state and action sequences as $\mb P^P_\pi$.

We denote the expectation operator with respect to $\mb P^{P}_\pi $ by $\mb E_\pi^P[\cdot]$. Note that the stochastic process $\{S_t\}^\infty_{t=0}$ is a time-homogeneous Markov chain under $\mb P^{P}_\pi$ with transition probabilities $\mb P^{P}_\pi(S_{t+1}=s'|S_t=s)=\sum_{a\in\mc A} P(s'|s,a)\pi(a|s)$. 
Throughout this paper, we assess the desirability of a policy by its expected net present value, which is captured directly in the definition of the value function.
The value function $\Value{\pi}{P}\in\mb R^{S}$ corresponding to a transition kernel $P$ and a stationary policy $\pi$ is defined through
    $$\Value{\pi}{P}(s)=\lim_{T\to\infty}\mb E_\pi^P\left[\sum_{t=0}^{T} \gamma^t r(S_t, A_t) \mid S_0=s\right].$$
The policy evaluation problem consists in evaluating the value function~$\Value{\pi}{P}(s)$ for a fixed policy~$\pi$ and initial state~$s$, whereas the policy improvement problem seeks a policy that maximizes $V_\pi^P(s)$.

In practice, the transition kernel $P$ is not known precisely a \emph{priori} \cite{wiesemann2013robust}. Moreover, transition models estimated from limited data are highly sensitive to misspecification, and small estimation errors can significantly degrade policy performance \cite{ref:Nilim-05}. Thus, it is natural to use the RMDP framework to find a policy that maintains performance across a set of plausible transition kernels. Under the RMDP framework, we suppose that the transition kernel~$P$ is only known to belong to an ambiguity set $\mc P\subseteq \Delta(\mc S)^{S\times A}$, where $\Delta(\mc S)$ denotes the probability distributions over states. We assess the desirability of a policy by its \emph{worst-case} expected value.
The worst-case value function $\Value{\pi}{\star}\in\mb R^S$ associated with a given policy $\pi$ and an ambiguity set $\mc P$ is then defined through
\begin{equation}\label{expr:DMDP}
    \Value{\pi}{\star}(s)=\min_{P\in\mc P} \Value{\pi}{P}(s).
\end{equation}
The \textit{robust policy evaluation problem} then consists in evaluating the worst-case value function~$\ValRob{\pi}(s)$ for a fixed policy~$\pi$ and initial state~$s$, and the \textit{robust policy improvement problem} aims to solve
\begin{align}\label{def:policy:learning:objective}
    \max_{\pi\in\Pi}\min_{P\in\mc P}\Value{\pi}{P}(s).
\end{align}
The structure of the ambiguity set $\mc P$ largely determines the difficulty of solving the robust policy evaluation and optimal policy computation. These problems become relatively easy if the ambiguity set is rectangular (see Definition~\ref{def:rect}) because dynamic programming principle holds for these types of RMDP problems.
Under the rectangularity assumption of the ambiguity set, we construct the robust Bellman operator by replacing the expectation over a fixed transition model with the worst-case expectation over the set of plausible transition kernels. This modification still preserves contraction and fixed-point properties that are standard in the literature~\citep{wiesemann2013robust}. Intuitively, contraction ensures that repeated Bellman updates progressively bring value estimates closer together rather than diverging, while the fixed-point property guarantees that this process converges to a unique stable solution representing the long-run value of each state. We introduce the formal definition of rectangular ambiguity sets below.

\begin{definition}[Rectangular ambiguity sets]\label{def:rect}
    A set $\mc P\subseteq \Delta(\mc S)^{S\times A}$ of transition matrices is called $\ $
    \begin{enumerate}[label = (\roman*), leftmargin=15pt]
        \item \textrm{$(s,a)$-rectangular}
            if $\mc P=\prod_{(s,a) \in \mc S\times\mathcal{A}} \mathcal{P}_{s,a}$ for some $\mathcal{P}_{s,a} \subseteq \Delta(\mc S)$, $(s,a) \in \mc S\times\mathcal{A}$~\cite{ref:Nilim-05,iyengar2005robust};
        \item \textrm{$s$-rectangular} if $\mc P=\prod_{s \in \mathcal{S}} \mathcal{P}_s $ for some $\mathcal{P}_s \subseteq \Delta(\mc S)^{A}, s \in \mathcal{S}$~\citep{le2007robust}.
    \end{enumerate}
\end{definition}
Here, $\Delta(\mc S)$ denotes the probability distributions over next states, and $\Delta(\mc S)^{A}$ is the Cartesian product of $A$ copies of $\Delta(\mc S)$, assigning one transition distribution to every action at state $s$. Under $(s,a)$–rectangularity, uncertainty is modeled independently at each state–action pair, whereas under $s$–rectangularity it is specified at the state level and shared across actions taken at that state.

Such independence structures arise naturally in many real systems. In persistent surveillance, detection at each location is primarily determined by local environmental conditions, line-of-sight, and sensor posture, yielding probabilistic coverage models driven by local geometry and visibility \cite{6334453}. In multi-robot coverage, sensing performance within an operating region is modeled with respect to the robot’s configuration and the environment in that region, consistent with decentralized, region-based coverage formulations \cite{10.1177/0278364916688103}.

In our ensuing discussions, we will consider RMDPs with $(s,a)$-rectangular sets. Recall that $(s,a)$-rectangularity requires that the adversary selects the next-state distribution independently for each state–action pair. By contrast, under $s$-rectangularity, the adversary's choice of the next-state distribution may depend on the decision maker’s entire policy. Thus, $(s,a)$-rectangularity grants the adversary greater flexibility, leading to a more conservative optimal robust policy for the decision maker as compared to the $s$-rectangularity case.

\section{Contributions of Work}
The main contributions of our work are as follows:
\begin{itemize}
    \item \emph{Mission-specific RMDP formulation:} We develop a two-phase, graph-based RMDP that captures the operational structure of CCA conducting ISR missions. Our model captures key operational factors such as spatial constraints, exposure risks, and sensing limitations, allowing ISR missions to be viewed as sequential decision-making problems that balance information gathering and survivability.
    
    \item \emph{Adaptive online robust planning:} We tailor and extend the adaptive RMDP framework \cite{sun2025online} for ISR missions by integrating Bayesian inference–based ambiguity contraction and relaxing the loop-free assumption. This mechanism refines local uncertainty using online sensing data, allowing policies to transition naturally from conservative to aggressive behavior and revealing an inherent trade-off between robustness and efficiency.

    \item \emph{Theoretical foundation for adaptive robustness:} We provide formal guarantees for the proposed adaptive robust planner, including almost-sure convergence of time-varying robust Bellman operators, finite-time safety bounds under rectangular uncertainty, and asymptotic optimality as credible sets with threat hypotheses consistent with local observations contract to the true threat type. These results establish that the proposed method remains conservative under uncertainty while recovering optimal performance as information accumulates.
        
    \item \emph{Simulation-based evaluation:} We validate the proposed framework through multi-stage simulations that examine three key aspects: baseline convergence and stability, scalability with increasing graph size and threat diversity, and generalization to non-Gaussian threat distributions. In Experiment~1, the adaptive planner achieved similar ISR observation rewards to the nominal baseline while reducing exposure events by 84\%, with the static robust planner serving as a conservative lower bound for our planner. The results consistently demonstrate improved ISR efficiency and reduced exposure compared with nominal and static robust baselines.
\end{itemize}

\section{Problem Statement}
\label{sec:problem_statement}
\subsection{Mission Scenario}
\begin{figure}[htbp]
    \centering
    \includegraphics[width=0.6\linewidth]{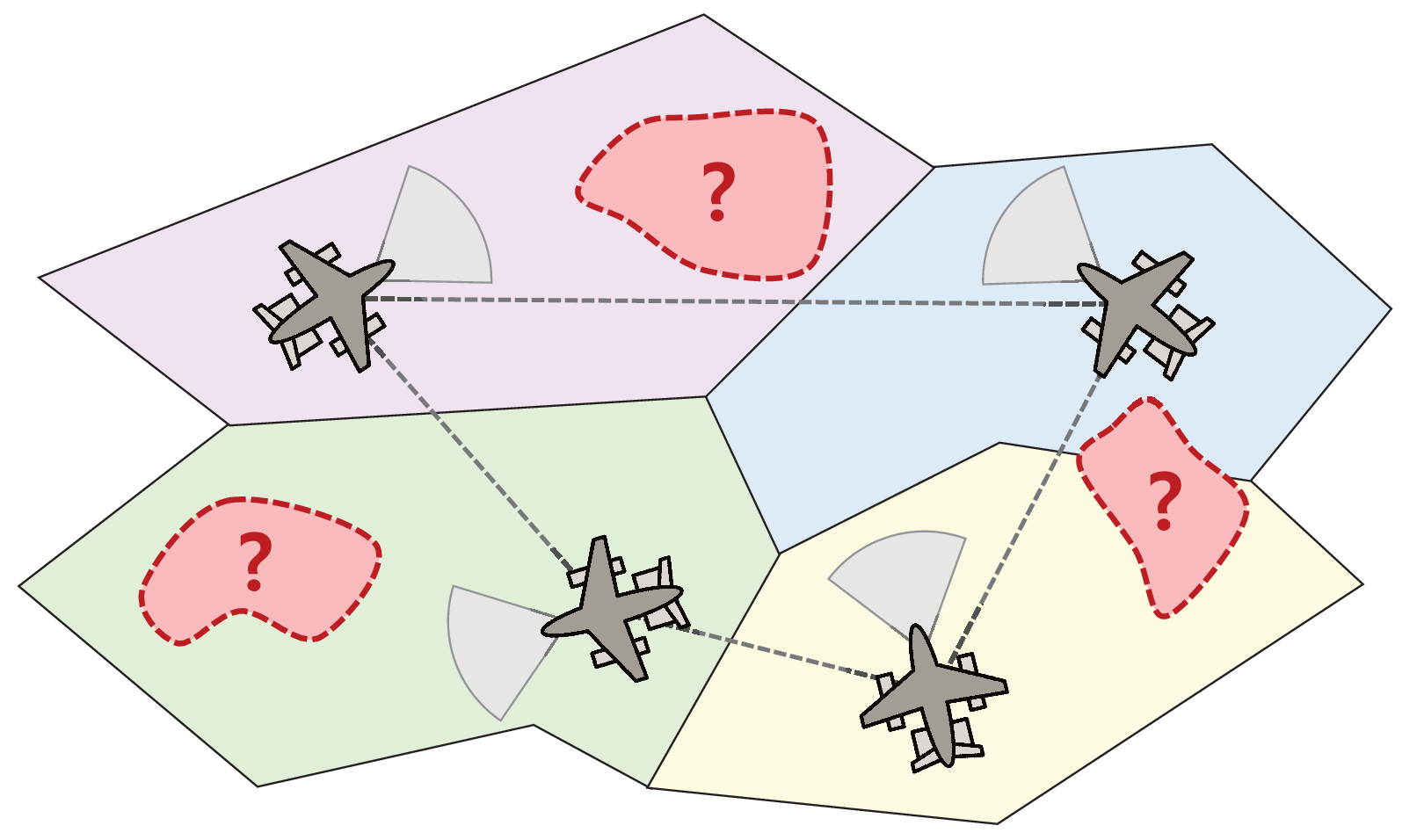}
    \caption{Mission-level concept for decentralized ISR missions with CCAs. Each aircraft conducts surveillance within an assigned region under local threat uncertainty. Colored polygons indicate example regional areas, red dashed zones mark uncertain threat locations, and aircraft icons with sensing arcs illustrate sensing coverage.}
    \label{fig:mission}
\end{figure}
Autonomous ISR missions in contested environments require aircraft to monitor large geographic regions that may contain uncertain adversarial threats. These threats can exhibit a wide range of adversarial behaviors, often acting strategically to expose or deceive ISR aircraft operating in the area. While the exact threat behavior is unknown, each ISR aircraft maintains a set of known \emph{prototype} models representing plausible adversary strategies. This is feasible because adversary tactics often follow a few recurring patterns, such as aggressive radar use or passive sensing. We assume that, based on prior intelligence, these patterns are encoded as prototype models and installed in each aircraft's reasoning system.

\begin{definition}[Prototype]
    A \emph{prototype} is a predefined transition model that represents one plausible adversary strategy inferred from prior knowledge. Each prototype captures a distinct pattern in how the adversary senses, reacts, or engages with ISR aircraft (e.g., continuous radar emission, intermittent activation, passive monitoring). The true adversary behavior is assumed to coincide with one element in this finite set, each corresponding to a full transition kernel. At each node in the environment, the agent maintains a credible set containing the prototype models that remain consistent with the observed threat responses at that location.
\end{definition}

In this setting, we introduce a team of CCAs operated in a coordinated manner to monitor the environment while managing uncertainty and avoiding adversarial threats. Each aircraft operates under model uncertainty and decides its trajectory, sensing strategy, and threat-avoidance policy in real time, without access to the ground truth adversary model. The key challenge is operating CCAs safely in an environment with potential threats while maximizing the information gain achievable through distributed sensing. The mission objective is to maximize the reliable collection of ISR information across the operational area while maintaining safety under uncertain, structured threat dynamics.

\Cref{fig:mission} illustrates the mission-level concept: a surveillance task is carried out by a fleet of CCAs, each assigned to a portion of the operational space. Threats, represented by uncertain adversarial behavior, may exist anywhere within the region and follow one of several known behavioral prototypes. Each aircraft autonomously selects actions and sensing directions while sharing threat information. 

\paragraph{Scope of This Study.}
While the mission concept in \Cref{fig:mission} involves a team of CCAs, this work does not directly model multi-agent coordination or inter-regional task allocation. Instead, we focus on the decision-making problem of a single autonomous flight operating within its assigned region, assuming that the overall area has been pre-divided among aircraft before mission execution. Each flight independently performs its ISR task under local threat uncertainty, applying the proposed adaptive robust framework to its region. This formulation serves as a foundational step toward future research (see \Cref{sec:future_work}) on multi-agent coordination, dynamic region reassignment, and human pilot–CCA teaming under uncertainty.

\subsection{Proposed Adaptive RMDP Framework}
To address this problem, we propose an online adaptive RMDP framework in which uncertainty over adversary behavior is represented through node-wise credible sets $U_v^t$. Each credible set is initialized as $U_v^0=\Theta$, which contains all prototype threat models based on prior knowledge. These credible sets induce the corresponding transition ambiguity sets $U_{s,a}^t$ through a fixed mapping $U_{s,a}^t = \phi_{s,a}(U_v^t)$, so that at the beginning of the mission the ambiguity sets cover the entire range of transition models consistent with the prototype set. Each aircraft therefore begins with a conservative strategy that assumes the worst-case threat among known prototypes. Over time, local observations enable each aircraft to eliminate incompatible prototype models, gradually refining its belief and adapting its behavior. As the credible sets shrink, the induced ambiguity sets contract accordingly, enabling a transition from a cautious policy to more confident and efficient ISR operations.

\begin{figure}[htbp]
\centering
\begin{tikzpicture}[
    node distance=4.0cm,
    box/.style={rectangle, rounded corners, minimum width=2.8cm, minimum height=1cm, draw=black!80, thick, align=center}
]
% Nodes
\node[box] (obs) {Observation\\ $o_t$};
\node[box, right of=obs] (cred) {Credible Set\\ $U_v^t$};
\node[box, right of=cred] (amb) {Ambiguity Set\\ $U_{s,a}^t$};
\node[box, right of=amb] (bell) {Robust Bellman\\ Operator};

% Forward edges with labels
\draw[->, thick] (obs) -- node[above]{$o_t$} (cred);
\draw[->, thick] (cred) -- node[above]{$U_v^t$} (amb);
\draw[->, thick] (amb) -- node[above]{$U_{s,a}^t$} (bell);

\end{tikzpicture}
\caption{Information flow in the adaptive RMDP framework. Observations refine credible sets, which induce transition ambiguity sets used by the robust Bellman operator.}
\label{fig:pipeline}
\end{figure}
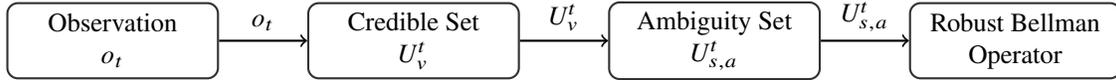

\begin{table}[htbp]
\centering
\caption{Credible sets and ambiguity sets in the adaptive RMDP.}
\begin{tabular}{p{1.8cm} p{6.6cm} p{6.6cm}}
\toprule
 & \textbf{Credible Set $U_v^t$} & \textbf{Ambiguity Set $U_{s,a}^t$} \\
\midrule
\textit{Definition} &
Plausible threat types at node $v$ &
Transition models induced by $U_v^t$ via $\phi_{s,a}$ \\

\textit{Uncertainty} &
Model uncertainty from limited observations &
Transition uncertainty captured in the RMDP \\

\textit{Update} &
Shrinks from posterior updates and pruning &
Contracts through $U_{s,a}^t=\phi_{s,a}(U_v^t)$ \\

\textit{Usage} &
Determines remaining prototype hypotheses &
Used in the robust Bellman operator \\

\bottomrule
\end{tabular}
\end{table}

In the early phase of the mission, all aircraft operate conservatively, using robust policies induced by the initial credible sets, which contain all prototype threat models. As the mission progresses, the system adapts dynamically to eliminate unlikely prototype models, gradually refining both its beliefs and the associated ambiguity sets. This results in progressively more targeted and efficient ISR mission trajectories. This gradual shift from cautious to efficient behavior reflects a continuous exploration–exploitation trade-off achieved through policy refinement: Initially, aircraft act conservatively under large model uncertainty. As they gain confidence through ongoing observations, their policies become increasingly refined, enabling more efficient ISR mission without compromising robustness.

\section{Methodology}\label{sec:methodology}
This section formalizes the adaptive robust planner used in our ISR mission scenario. We first specify the graph implementation to the RMDP notation from \Cref{sec:preliminaries}, then specify observation and exposure models, the reward decomposition, and the robust and nominal value-iteration operators. We next introduce the adaptive update rule in which Bayesian posteriors shrink the node-wise credible sets and, through the mapping $\phi_{s,a}$, the induced transition ambiguity sets. Finally, we present the theory establishing the convergence and safety guarantees of the proposed adaptive robust planner.

\subsection{Graph-based Robust RMDP for ISR}
ISR missions involve sequential decision-making under uncertainty, where the agent balances information gathering with exposure risk. To model this setting formally, we represent the operational environment as a graph-structured RMDP. As illustrated in \Cref{fig:idea}, each node corresponds to a spatial location with an unknown latent threat type, and transitions occur through movement along the graph edges. The aircraft takes sensing actions when located at a node and movement actions along the graph edges to navigate between adjacent surveillance locations.

\begin{figure}[htbp]
    \centering
    \includegraphics[width=0.4\linewidth]{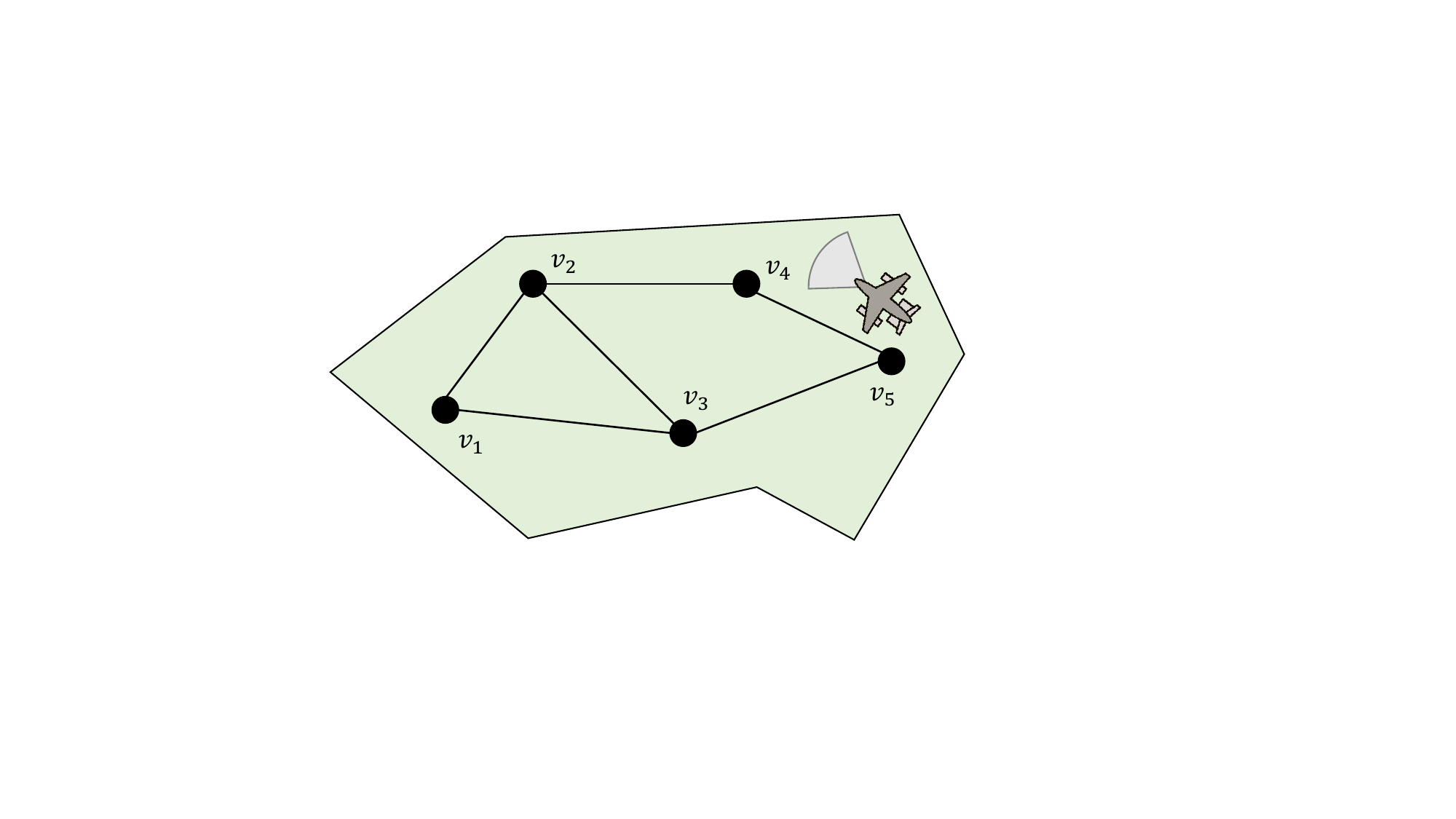}
    \caption{A zoomed-in view of a single ISR operation region from \Cref{fig:mission}. The polygon shows one aircraft's assigned area, where each node represents a surveillance point with its own threat type, and edges define feasible movements. The aircraft performs sensing-based actions at nodes and movement actions along edges.}
    \label{fig:idea}
\end{figure}

\subsubsection{Graph RMDP with Two-Phase States}
In our model, each node $v$ in the graph represents a state $s\in\mc S$. The node-wise ambiguity sets $U_{s,a}^t$ correspond to local subsets of the global transition ambiguity set $\mc P_t$. We consider an undirected graph $G=(\mc S,\mc E)$, where the node set is $\mc S=\{0,\ldots,S-1\}$ and each node $v$ has neighbors $\mc N(v)=\{u\in\mc S:(v,u)\in\mc E\}$. Each node $v$ is assigned an unknown threat type $\theta_v\in\Theta=\{1,\ldots,n_\theta\}$, and the system evolves in alternating \emph{sense} and \emph{move} phases. We lift the original state space $\mc S$ to a two-phase version
\begin{equation}
      \mc S' \!=\! \{(v,\text{S}'),(v,\text{M}'):v\in\mc S\}.
\end{equation}
At sense states $(v,\text{S}')$ the action set is $\mc A_S=\{A,B,C,.\ldots\}$, and at move states $(v,\text{M}')$ the action set is $\mc A_{\text{M}'}(v)=\mc N(v)$ if $\mc N(v)\neq\emptyset$ and $\mc A_{\text{M}'}(v)=\{v\}$ otherwise. This two-phase design reflects real ISR missions, in which remaining in one place for extended periods increases the risk of exposure, defined as the event that the aircraft becomes observable to adversarial sensors. Alternating between sensing and movement prevents this buildup of risk and reduces prolonged exposure within a single region.

\subsubsection{Observation and Exposure Models}
When taking a sense action $a\in\mc A_{\text{S}'}$ at node $v$, the agent conducts local reconnaissance and receives a scalar observation $o_v$ that quantifies the information gained from that area.
\begin{equation}
    o_{v,t} \sim p(o\,|\,a_t,\theta_v),
\end{equation}
where \(p(o\,|\,a,\theta_v)\) denotes the observation distribution conditioned on the sensing action and the local threat type. In this context, exposure refers to the ISR aircraft being detected or otherwise compromised by the local threat. The agent independently samples an exposure score
\begin{equation}
    z_{v,t} \sim q(z\,|\,a_t,\theta_v),
\end{equation}
where \(q(z\,|\,a,\theta_v)\) represents the exposure score distribution. This score induces a binary exposure event
\begin{equation}
    \eta_{v,t} = \mathds{1}[z_{v,t} > \tau_{\eta}],
\end{equation}
for a fixed threshold \(\tau_{\eta} > 0\). The corresponding exposure probability is given by
\begin{equation}
    p_{\mathrm{exp}}(a,\theta)
    = \int_{\tau_{\eta}}^{\infty} q(z\,|\,a,\theta)\,dz,
\end{equation}
which represents the probability that the exposure score exceeds the threshold $\tau_{\eta}$. Both distributions \(p\) and \(q\) depend on the prior knowledge of the environment available before the mission begins, and the observation and exposure processes are assumed to be independent given \(a\) and \(\theta_v\).

\subsubsection{Reward Model}
We design the reward to capture four operational factors. Sensing utility measures how informative an observation is at a location, reflecting the surveillance value gained from the action. Exploration novelty encourages visiting less-explored nodes to expand situational awareness rather than repeatedly sampling familiar areas. Exposure risk penalizes actions that increase the chance of detection or compromise, accounting for both immediate and persistent effects of past exposure. Finally, operational cost reflects resource burdens such as fuel usage or reduced endurance.
We express the sensing reward as
\begin{equation}
    r_{\mathrm{sense}}(v_t,a_t,\theta_v)
    = r_{\mathrm{obs}}(v_t,a_t,\theta_v)
    + r_{\mathrm{nov}}(v_t)
    + r_{\mathrm{haz}}(\eta_{v,t})
    - c_{\mathrm{sense}}(a_t).
\end{equation}
The individual components are defined as follows:
\begin{equation}
\begin{aligned}
    r_{\mathrm{obs}}(v_t,a_t,\theta_{v_t})
    &= \mathbb{E}[\,u(o_{v_t,t}) \mid a_t,\theta_{v_t}\,], \\[3pt]
    r_{\mathrm{nov}}(v_t)
    &= \lambda_{\mathrm{nov}}\, n_t(v_t), \\[3pt]
    r_{\mathrm{haz}}(\eta_{v,t})
    &= \lambda_{\mathrm{imm}}(-\eta_{v,t})
    + \lambda_{\mathrm{pers}}(-e_t)
    + \lambda_{\mathrm{cum}}(-\log(1+d_t)), \\[3pt]
    c_{\mathrm{sense}}(a_t) &\ge 0.
\end{aligned}
\end{equation}

The observation reward $r_{\mathrm{obs}}(v_t,a_t,\theta_v)$ quantifies the expected sensing gain under latent threat type $\theta_v$. The variable $e_t\in[0,1]$ represents a penalty carried over from a previous exposure event. It decays over time at a rate $\alpha\in(0,1)$, which modulates the persistence of exposure risk through the term $-\lambda_{\mathrm{pers}} e_t$ in the reward, but resets to~1 whenever a new exposure occurs. The cumulative count $d_t$ records the number of exposures, updated as $d_{t+1}=d_t+\eta_{v,t}$. The novelty map $n_t(v)\ge0$ increases when a node has not been visited recently. We implement it as an exponential moving average of the time since the last visit,
\begin{equation}\label{novelty-update}
    n_{t+1}(v) = \beta\,n_t(v) + (1-\beta)\,(t - t_{\mathrm{last}}(v)),
\end{equation}
where $t_{\mathrm{last}}(v)$ denotes the most recent visit time and $\beta\in(0,1)$ controls the decay rate. An exponential moving average avoids abrupt jumps that occur when a node is visited once after a long absence. This smooth update provides a simple, stable measure of how long since the node was visited.

In the move states, the reward is defined as
\begin{equation}
    r_{\mathrm{move}}(v_t\!\to\!v_{t+1})
    = -c_{\mathrm{move}}
      + \lambda_{\mathrm{nov}}\,n_t(v_{t+1})
      + \lambda_{\mathrm{pers}}(-e_t)
      + \lambda_{\mathrm{cum}}(-\log(1+d_t)).
\end{equation}
Each term plays a distinct functional role. The novelty bonus $r_{\mathrm{nov}}(v_t)=\lambda_{\mathrm{nov}}n_t(v_{t+1})$ encourages exploration of less-visited regions. The hazard term aggregates exposure penalties across three timescales: an immediate penalty $-\lambda_{\mathrm{imm}}\eta_{v,t}$ for direct exposure, a persistent penalty $-\lambda_{\mathrm{pers}}e_t$ for sustained exposure, and a cumulative penalty $-\lambda_{\mathrm{cum}}\log(1+d_t)$ for repeated exposures. Finally, the sensing and movement costs $c_{\mathrm{sense}}(a)$ and $c_{\mathrm{move}}$ introduce operational penalties that discourage inefficient sensing or movement, respectively. The weighting parameters $\lambda$ control the trade-off between exploration and risk in the adaptive planner.

\subsubsection{Threat Models and Ambiguity Sets} 
$\mc P_t$ denotes the global transition ambiguity set at time step $t$. Recall that in our formulation, uncertainty is represented node-wise through posterior credible sets $U_v^t \subseteq \Theta$. Each element $\theta \in \Theta$ corresponds to a distinct threat prototype that governs sensing outcomes and exposure statistics. The credible set $U_v^t$ therefore captures the set of threat types that remain plausible for node $v$ under the current posterior. For any state–action pair $(s,a)$ associated with node $v$, these credible sets are mapped to local transition ambiguity sets through the continuous map $U_{s,a}^t = \phi_{s,a}(U_v^t),$ and the overall transition ambiguity takes the rectangular form $\mathcal P_t = \prod_{(s,a)} U_{s,a}^t.$ Following the standard RMDP literature \cite{iyengar2005robust}, these local sets are assumed independent across nodes, consistent with an $(s,a)$-rectangular uncertainty structure.

The credible sets define a stage-wise robust sensing reward. For any sense action $a \in \mathcal A_S$ at node $v$, we set
\begin{equation}
    \tilde r_{\mathrm{rob}}(v,a,U_v^t)
    = \min_{\theta\in U_v^t}
    \Big[\,
        \mathbb{E}_{p(o\,|\,a,\theta)}[o]
        + \lambda_{\mathrm{imm}}\big(-p_{\mathrm{exp}}(a,\theta)\big)
    \,\Big]
    - c_{\mathrm{sense}}(a),
\end{equation}
where the minimization provides a conservative estimate of the expected sensing reward consistent with the worst-case threat realization within the credible set $U_v^t$. When $U_v^t = \{\hat\theta_v\}$, the credible set collapses to a singleton, meaning that only one threat hypothesis remains consistent with past observations at node $v$, and the robust surrogate reduces to the nominal reward
\begin{equation}
    \tilde r_{\mathrm{nom}}(v,a)
    = \mathbb{E}_{p(o\,|\,a,\hat\theta_v)}[o]
      + \lambda_{\mathrm{imm}}\big(-p_{\mathrm{exp}}(a,\hat\theta_v)\big)
      - c_{\mathrm{sense}}(a).
\end{equation}
This nominal surrogate corresponds to planning under a single estimated threat type, without accounting for uncertainty in $\theta_v$. This surrogate serves as the local reward model used in the robust Bellman updates.

\subsection{Adaptive RMDP for ISR}
This section describes the adaptive planning mechanism that integrates robust value iteration with Bayesian inference and posterior-driven ambiguity contraction. The resulting loop adaptively refines node-wise credible sets $U_v^t$ and the induced transition ambiguity sets $U_{s,a}^t$, while replanning under updated novelty maps $n_t(v)$.

\subsubsection{Bellman Operators and Value Iteration}
Let $V:\mc S'\!\to\!\mathbb{R}$ denote the value function over the two-phase state space. During each Bellman update, auxiliary quantities that depend on visit history or past exposure, such as the novelty map $n_t(v)$, the persistent exposure level $e_t$, and the cumulative exposures $d_t$, are considered fixed at the current time step.
\begin{align}
    (\mc T_{\mathrm{r}}V)(v,\mathrm{S}')
    &= \max_{a\in\mc A_{\text{S}'}}
     \Big\{
       \tilde r_{\mathrm{r}'}(v,a,U_v^t)
       + \lambda_{\mathrm{nov}}n_t(v)
       + \gamma V(v,\mathrm{M}')
     \Big\},\\
    (\mc T_{\mathrm{r}'}V)(v,\mathrm{M}')
    &= \max_{u\in\mc A_{\text{M}'}(v)}
     \Big\{
       -c_{\mathrm{move}}
       + \lambda_{\mathrm{nov}}n_t(u)
       + \gamma V(u,\mathrm{S}')
     \Big\}.
\end{align}
The nominal operator $\mc T_{\mathrm{n}'}$ is defined analogously to $\mc T_{\mathrm{r}'}$, with the robust surrogate $\tilde r_{\mathrm{r}'}$ replaced by its nominal counterpart $\tilde r_{\mathrm{n}'}$. Intuitively, $\mc T_{\mathrm{r}'}$ performs a worst-case backup over all plausible threat realizations, while $\mc T_{\mathrm{n}'}$ corresponds to standard Bellman updates under a single estimated type. Both operators are $\gamma$-contractions under the supremum norm, ensuring that repeated application of the Bellman update,
\begin{equation}
    V_{k+1} = \mc T_{\xi} V_k, \qquad \xi \in \{\mathrm{r}', \mathrm{n}'\},
\end{equation}
converges to a unique fixed point $V^\star_{\xi}$. The resulting value function defines a greedy policy $\pi_{\xi}^\star$ that is stationary and optimal with respect to the chosen operator. The contraction property further guarantees stable convergence even when ambiguity sets evolve adaptively over time.

The robust Bellman operator implicitly guards against observation and exposure uncertainty by optimizing for the worst-case threat realization within each local ambiguity set. This guarantees a conservative value estimate that bounds mission risk even under distributional shifts in sensing or exposure statistics. This property is particularly desirable in ISR mission contexts, where estimated threat distribution may differ from the true underlying distribution due to deception or or sparse and imperfect observations.

\subsubsection{Bayesian Belief Update and Shrinkage of Credible Sets and Ambiguity Sets}
At each sensing step at node $v$, the agent maintains a belief distribution $b_{t,v}$ over possible threat types, which is updated according to the observation likelihood as
\begin{equation}
    b_{t+1,v}(\theta)
    = \frac{b_{t,v}(\theta)\,p(o_t\,|\,a_t,\theta)}
         {\sum_{\theta'\in\Theta} b_{t,v}(\theta')\,p(o_t\,|\,a_t,\theta')},
\end{equation}
where $p(o_t\,|\,a_t,\theta)$ denotes the observation likelihood function associated with action $a_t$ and threat type $\theta$. The credible set is then refined according to the posterior confidence rule
\begin{equation}
  U_{v}^{t+1} =
  \begin{cases}
    \arg\max_{\theta} b_{t+1,v}(\theta)\, & \text{if } \max_{\theta} b_{t+1,v}(\theta)\ge\rho_{\mathrm{lock}},\\
    \{\theta\in U_v^t:\, b_{t+1,v}(\theta)\ge\varepsilon_{\mathrm{prune}}\}, & \text{otherwise}.
  \end{cases}
\end{equation}
The rule commits to the most probable threat type once the posterior confidence exceeds the locking threshold, while pruning low-probability hypotheses from the credible set otherwise. Specifically, posterior log-gaps between the $i$-th and $j$-th most probable types at node $v$ are evaluated as
\begin{equation}
    g_{i,j}(v)
    = \log\!\left(\frac{b_{t+1,v}(\theta_{(i)})}{b_{t+1,v}(\theta_{(j)})}\right),
\end{equation}
where $\theta_{(i)}$ and $\theta_{(j)}$ denote posterior hypotheses ranked by their probabilities in descending order. When $g_{i,j}(v)$ exceeds pre-defined thresholds, the lower-probability hypotheses are removed from $U_v^t$. Models with persistently low posterior mass reflect threat behaviors that are consistently ruled out by the observed sensing outcomes. Because each credible set $U_v^t$ induces the transition ambiguity sets $U_{s,a}^t = \phi_{s,a}(U_v^t)$, any reduction of $U_v^t$ necessarily produces a contraction of the corresponding ambiguity sets. Crucially, no additional exploratory actions are needed: routine surveillance naturally produces observations that distinguish plausible hypotheses from implausible ones. As a result, ambiguity reduction occurs through ongoing surveillance, enabling the aircraft to become less conservative over time without compromising safety. The $\textsc{Shrink}$ operator in \Cref{alg:adaptive_robust} implements exactly this posterior-based pruning rule.

\subsubsection{Adaptive Robust Planning Loop}
The adaptive robust planner replans every $K$ steps based on the current visit statistics and ambiguity sets, as outlined in \Cref{alg:adaptive_robust}. This loop implements a closed-form interaction between inference and planning: Bayesian updates refine node-wise beliefs, posterior-driven shrinkage tightens local ambiguity sets, and periodic robust replanning ensures consistent adaptation to evolving situational awareness. The robust value iteration guarantees that, even during early uncertain phases, the policy remains safe against the worst-case realizations within each local ambiguity set.

\begin{algorithm}[h!]
\caption{Adaptive Robust Planning for ISR}
\label{alg:adaptive_robust}
\begin{algorithmic}[1]
\State Initialize uniform beliefs $b_{0,v}(\theta)=1/|\Theta|$ and credible sets $U_v^0=\Theta$ for all $v\in\mc S$
\Comment{\textcolor{blue}{$|\Theta|$ is the cardinality of $\Theta$}}
\State Initialize exposure variables $e_0=0$, $d_0=0$, and novelty map $n_0(v)=0$ for all $v$
\For{$t=0,1,\ldots,T-1$}
  \If{$t\equiv 0 \pmod K$}
     \State Solve robust value iteration with $U_v^t$ and current novelty map $n_t(v)$ to obtain policy $\pi_t$
  \EndIf
  \State Observe current state $(v_t,\mathrm{S}')$ and take sense action $a_t=\pi_t(v_t,\mathrm{S}')$
  \State Sample observation $o_t \sim p(o\,|\,a_t,\theta_{v_t})$ and exposure score $z_t \sim q(z\,|\,a_t,\theta_{v_t})$
  \State Set exposure indicator $\eta_{v,t} = \mathds{1}[z_t > \tau_{\mathrm{\eta}}]$
  \State Update belief:
  \(
    b_{t+1,v_t}(\theta) =
    \frac{b_{t,v_t}(\theta)\,p(o_t\,|\,a_t,\theta)}
         {\sum_{\theta'\in\Theta} b_{t,v_t}(\theta')\,p(o_t\,|\,a_t,\theta')}
  \)
  \State Update exposure variables:
  \(
    e_{t+1} = \max\{\alpha e_t,\,\eta_{v,t}\}, \quad
    d_{t+1} = d_t + \eta_{v,t}
  \)
  \State Update novelty map $n_{t+1}(v)$ based on visit recency according to~\eqref{novelty-update}
  \State Update credible set:
  \(
    U_{v_t}^{t+1} \gets \textsc{Shrink}(U_{v_t}^t, b_{t+1,v_t}), \qquad
    U_v^{t+1} \gets U_v^t \text{ for } v \neq v_t
  \)
  \State Update induced ambiguity sets:
  \(
    U_{s,a}^{t+1} \gets \phi_{s,a}(U_v^{t+1})
    \; \text{for all state-action pairs } (s,a) \text{ associated with } v
  \)
  \State Move according to $u_t = \pi_t(v_t,\mathrm{M}')$ and set next state $(u_t,\mathrm{S}')$
\EndFor
\end{algorithmic}
\end{algorithm}

As uncertainty diminishes and local ambiguity sets collapse to singletons, the adaptive robust planner smoothly transitions toward nominal behavior, recovering standard value iteration in the limit. In particular, as ambiguity sets shrink and converge to singletons, the planner asymptotically recovers the nominal value function $V^\star_{\mathrm{nom}}$, ensuring policy consistency with standard MDP solutions. This gradual transition preserves robustness during uncertainty while avoiding overly conservative behavior in well-understood regions.

This structure aligns naturally with ISR mission requirements, where agents need to balance information gathering and exposure risk under uncertain environmental conditions. By coupling belief refinement with robust decision-making, the planner maintains operational safety while efficiently allocating limited sensing opportunities, resulting in adaptive behaviors consistent with practical ISR mission dynamics. Together, these components enable the planner to gradually shift from conservative exploration to confident exploitation as uncertainty diminishes, maintaining robustness throughout the mission.

\subsection{Theoretical Properties}\label{sec:theory}
We establish guarantees for the adaptive robust planner. Throughout, we work with finite state and action sets. The lifted state space $\mathcal S'$ created in the two-phase formulation is also finite.

\subsubsection{Assumptions and Regularity Conditions}
\begin{assumption}[Discounting and bounded rewards]\label{ass:disc}
The discount factor satisfies $\gamma\in(0,1)$. Per-step rewards and costs are uniformly bounded and measurable. Any cumulative exposure penalty is bounded so that $\sup_{s,a}|r(s,a)|<\infty$.
\end{assumption}

\begin{assumption}[$(s,a)$-rectangular uncertainty and contraction]\label{ass:rect}
For any $(s,a)$-rectangular family of ambiguity sets $\{U_{s,a}\subseteq\Delta(\mathcal S')\}$, where $\Delta(\mathcal S')$ denotes the probability simplex over the state space~$\mathcal S'$ at the next time step, the robust Bellman operator
\begin{equation}
(\mathcal T_{\mathrm{rob}}V)(s)\ :=\ \max_{a\in\mathcal A(s)}\ \min_{P(\cdot\mid s,a)\in U_{s,a}}
\Big\{ r(s,a)+\gamma\,\mathbb E_{P}[V(S')]\Big\}
\end{equation}
is a $\gamma$-contraction on $(\mathbb R^{|\mathcal S'|},\|\cdot\|_\infty)$ with modulus $\gamma$, where
\begin{equation}
\mathbb E_{P}[V(S')] = \sum_{s'\in\mathcal S'} P(s' \mid s,a)\, V(s').
\end{equation}
\end{assumption}

\begin{assumption}[Posterior and credible sets]\label{ass:post}
The Bayesian posterior $p(\theta_v\mid \mathcal D_t)$ is well defined on a compact parameter space $\Theta$, where $\mathcal D_t$ denotes all data observed up to time $t$. For each node $v$, the credible set $U_v^t\subseteq \Theta$ is Borel measurable.\footnote{A set is Borel measurable if it lies in the Borel $\sigma$-algebra $\mathcal{B}(\Theta)$, the smallest $\sigma$-algebra containing all open subsets of $\Theta$. This ensures that probabilities and expectations with respect to measures on $\Theta$ are well defined.}
\end{assumption}

\begin{assumption}[Posterior tightness and eventual nesting]\label{ass:tight}
For each $v$, there exists a target threat type $\theta_v^\star\in\Theta$ such that $\mathbb P(\theta_v^\star\in U_v^t)\to 1$ and $\sup_{\theta,\,\theta' \in U_v^t} \|\theta - \theta'\|_\infty \to 0$ in probability as $t\to\infty$, where $\|\cdot\|_{\infty}$ denotes the sup-norm on the threat parameter space $\Theta$. There exists a random finite time after which $U_v^{t+1}\subseteq U_v^{t}$ holds for all $t$.
\end{assumption}

\begin{assumption}[Stepwise continuity and dominated convergence]\label{ass:cont}
For any $(v,a)$, the maps $\theta\mapsto \mathbb E_{p(o\mid a,\theta)}[o]$ and $\theta\mapsto p_{\mathrm{exp}}(a,\theta)$ are continuous on $\Theta$. If the observation space is infinite, there exists an integrable dominating function $g(o)$ such that $\sup_{\theta\in\Theta} |\varphi(o)| \le g(o)$ and $\int_{\mathcal{O}} g(o)\,\mu(\mathrm{d}o) < \infty$, so that dominated convergence applies. Hence, for compact $U\subseteq\Theta$, the map $U\mapsto \min_{\theta\in U} f_{v,a}(\theta)$ is continuous under set convergence in Painlev\'e--Kuratowski (PK) sense.\footnote{Formally, PK convergence is defined through the convergence of inner and outer limits of sets \cite{AubinFrankowska1990}. In this framework, $U_v^t \to \{\theta_v^\star\}$ means that every sequence $\theta_t \in U_v^t$ has limit points contained in $\{\theta_v^\star\}$ and that the true parameter $\theta_v^\star$ can be approximated by points from $U_v^t$ for all large~$t$.} 
\end{assumption}

\begin{assumption}[Greedy selection and tie-breaking]\label{ass:greedy}
Action sets $\mathcal A(s)$ are finite for all $s$. A measurable tie-breaking rule (i.e., a tie-breaker that defines a Borel-measurable action mapping) is fixed. If the optimal action is unique for each state under the true parameter $\theta^\star$, then the greedy selector will pick a single, well-defined action in the limit.
\end{assumption}

% Assumption 1
These assumptions jointly ensure that the robust Bellman operator is well-defined, contractive, and stable under learning.
Discounting turns the infinite-horizon objective into a convergent power series and bounds the influence of remote outcomes, while uniform boundedness ensures that the Bellman update remains within a compact set. 
Bounded one-stage rewards imply that any fixed point satisfies 
$\|V\|_\infty \le R_{\max}/(1-\gamma)$, where $R_{\max}:=\sup_{s,a}|r(s,a)|$. This uniform ensures $V_t$ remains in the closed ball of $(\mathbb R^{|\mathcal S'|},\|\cdot\|_\infty)$ and allows Banach’s fixed-point theorem\footnote{Let $(X,d)$ be a complete metric space and let $T:X \to X$ be a contraction. There exists $\gamma \in (0,1)$ such that $d(Tx,Ty) \le \gamma\, d(x,y)$ for all $x,y \in X$. Then $T$ admits a unique fixed point $x^\star \in X$, and the iterates $T^t x_0$ converge to $x^\star$ for any initial point $x_0 \in X$ \cite{10.5555/528623}.} to apply without additional growth conditions.
If these assumptions are violated, for example, when $\gamma \ge 1$ or the rewards are unbounded, value iteration may diverge or fail to admit a finite fixed point. Even if a policy is mathematically well defined, small modeling errors can accumulate and undermine safety guarantees.
In most engineered systems, physical limits or budget caps impose natural bounds. We ensure that any additional penalty terms remain bounded so that the uniform bound on the value function remains valid.

% Assumption 2
Rectangularity decouples ambiguity across state--action pairs, which removes adversarial coupling effects across time. This decoupling is exactly what makes the inner minimization separable and ensures the Bellman recursion holds in the presence of uncertainty over the transition probabilities.
Contraction with modulus $\gamma$ ensures uniqueness of the robust value and geometric error decay for value iteration and policy evaluation. Without contraction, limit points may not exist, and set-valued Bellman maps can exhibit cycling, highlighting the role of boundedness and discounting.
Rectangularity is a simplifying but conservative assumption: it ignores correlations between transitions of different $(s,a)$ pairs, making the problem more tractable but potentially less tight.

% Assumption 3
Compact credible sets and measurability ensure that Bayesian learning can be embedded in the planning process. Compactness ensures that the posterior remains confined to a bounded region of the parameter space, which is essential for establishing uniform continuity. Measurability ensures that the data-driven sets can be embedded into the filtration, a family of time-indexed $\sigma$-algebras encoding the information revealed up to time $t$, so that the operators are random but well-defined.
Compactness of the parameter space $\Theta$ and the credible set $U_v^t$ being Borel measurable guarantee existence of minimizers when taking $\min_{\theta\in U_v^t}$ and enable measurable selection for greedy policies and inner minimizers, meaning that the resulting selectors are Borel-measurable functions of the state. These properties are used implicitly to justify interchanging $\max$, $\min$, and expectations. 

% Assumption 4
Posterior tightness captures the idea that uncertainty shrinks as learning progresses. When each credible set only shrinks over time, the corresponding Bellman operators stop oscillating and approach a stable limit. Together, they imply that the algorithm eventually uses the Bellman operator corresponding to the true parameter.
Tightness means that each credible set converges to the true parameter in PK sense. This is what underpins operator convergence in Corollary~\ref{cor:asym_opt}. Eventual nesting allows monotonicity arguments that simplify the proof of almost sure convergence of the fixed points. 
If credible sets expand intermittently due to model misspecification, the derived safety bounds no longer provided useful protection. If sets shrink to the wrong point with positive probability, the limit policy can be biased.

% Assumption 5
Continuity ensures that small changes in posterior beliefs produce only small adjustments in the chosen action or value, preventing discontinuous control switches as uncertainty decreases. Dominated convergence is the technical bridge that lets limits pass through expectations, which is essential because the Bellman operator computes an expectation with respect to transition probabilities that depend on the parameter. With domination,\footnote{By dominated convergence, pointwise convergence of $\{\varphi_t\}$ together with an integrable dominating envelope $g$ allows the limit to pass through the expectation with respect to the parameterized probability measures under the standard conditions of dominated convergence \cite{Billingsley1999}.} we have that $\lim_{t\to\infty} \mathbb E_p[\varphi_t(o)] = \mathbb E_p[\lim_{t\to\infty}\varphi_t(o)]$, which is used to show continuity of the inner minima with respect to set limits. This is what yields uniform convergence of operators on bounded sets. In our setting, the parameter space~$\Theta$ is discrete, so both conditions hold automatically. Every map $\theta\mapsto f_{v,a}(\theta)$ is continuous, and expectations reduce to finite sums, making dominated convergence trivially valid. We retain this assumption for generality, so that the same theoretical statements extend without modification to continuous parameter spaces or models with continuous observations.

% Assumption 6
Finite action sets and a fixed tie-breaking rule prevent policy instability. The algorithm repeatedly selects greedy actions from a possibly set-valued argmax. A fixed rule avoids random policy fluctuations that would confound convergence and policy limit arguments.
Finiteness makes the argmax operator upper hemicontinuous\footnote{A set-valued map $F:X\rightrightarrows Y$ assigns to each $x\in X$ a subset $F(x)\subseteq Y$. It is upper hemicontinuous at $x$ if for every open set $O$ containing $F(x)$, there exists a neighborhood $N$ of $x$ such that $F(x')\subseteq O$ for all $x'\in N$. Intuitively, it means that the graph of $F$ does not jump upward in the limit, so the possible argmax sets vary continuously under small perturbations \cite{AubinFrankowska1990}.} with nonempty, compact values. A measurable selector then exists, i.e., a Borel-measurable state-to-action function chosen from each argmax set. Uniqueness in the limit upgrades set convergence of $Q$-values to pointwise convergence of deterministic policies.

\subsubsection{Operator Convergence under Shrinking Ambiguity Sets}
As the ambiguity sets evolve with data, the robust Bellman operator itself changes over time. A key question is whether value iteration driven by these data-dependent operators remains convergent and stable. The assumptions above ensure boundedness and a common $\gamma$-contraction modulus; thus, the operators approach a limiting operator uniformly on bounded subsets. The next theorem formalizes this convergence and shows that the iterates follow the limiting operator almost surely.

\begin{theorem}[Almost sure convergence of the adaptive robust value iteration]\label{thm:as_convergence}
Let $(\Omega,\mathcal F,\mathbb P)$ be a probability space and $\{\mathcal F_t\}_{t\ge0}$ an increasing filtration.\footnote{A filtration $\{\mathcal F_t\}$ is an increasing sequence of $\sigma$-algebras representing the information available up to time~$t$. A random variable or operator is $\mathcal F_t$-measurable if it depends only on data observed up to time~$t$.} Let $\{\mathcal T_{\mathrm{rob}}^t\}_{t\ge 0}$ be a sequence of random robust Bellman operators, each $\mathcal F_t$-measurable, induced by random ambiguity sets $\{U_{s,a}^t\}_{(s,a)}$ that are $\mathcal F_t$-measurable. Suppose Assumptions \ref{ass:disc}--\ref{ass:greedy} hold and that, almost surely, for every bounded set $B\subset\mathcal B$ with $\mathcal B:=\{V:\mathcal S'\to\mathbb R \text{ bounded}\}$,
\begin{equation}
\sup_{V\in B}\|\mathcal T_{\mathrm{rob}}^t(\omega)V-\mathcal T_\infty(\omega)V\|_\infty \longrightarrow 0,
\;\;\text{for $\mathbb P$-almost every }\omega\in\Omega,
\end{equation}
where $\mathcal T_\infty(\omega)$ denotes the operator obtained as $t \to \infty$. Since each $\mathcal T_{\mathrm{rob}}^t(\omega)$ is a $\gamma$-contraction with common modulus, $\mathcal T_\infty(\omega)$ is also a $\gamma$-contraction on $(\mathcal B,\|\cdot\|_\infty)$. Then there exists $\Omega_0\subseteq\Omega$ with $\mathbb P(\Omega_0)=1$ such that, for every $\omega\in\Omega_0$, the value iteration 
\begin{equation}
V_{t+1}(\omega)=\mathcal T_{\mathrm{rob}}^t(\omega)V_t(\omega)
\end{equation}
converges to the unique fixed point $V^\star(\omega)$ of $\mathcal T_\infty(\omega)$, i.e.,
$V_t(\omega)\to V^\star(\omega)$ with $\mathcal T_\infty(\omega)V^\star(\omega)=V^\star(\omega)$.
\end{theorem}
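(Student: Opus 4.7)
The plan is to work pointwise on a single full-measure event $\Omega_0$ on which all hypotheses of the theorem hold simultaneously. I would construct $\Omega_0$ as a countable intersection of the almost-sure events in the statement, namely uniform convergence of $\mathcal T_{\mathrm{rob}}^t(\omega)$ to $\mathcal T_\infty(\omega)$ on a countable exhaustion of bounded sets, together with the $\gamma$-contraction modulus of $\mathcal T_\infty(\omega)$, so that $\mathbb P(\Omega_0)=1$. Fix $\omega\in\Omega_0$. Because $(\mathcal B,\|\cdot\|_\infty)$ is a Banach space and $\mathcal T_\infty(\omega)$ is a $\gamma$-contraction on it, Banach's fixed-point theorem yields a unique $V^\star(\omega)\in\mathcal B$ with $\mathcal T_\infty(\omega)V^\star(\omega)=V^\star(\omega)$.

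Next I would confine the iterates to a common bounded ball so that the uniform-convergence hypothesis is actually applicable to them. Using Assumption~\ref{ass:disc} and the contraction structure of Assumption~\ref{ass:rect}, one gets the crude recursion $\|\mathcal T_{\mathrm{rob}}^t V\|_\infty \le R_{\max}+\gamma\|V\|_\infty$ with $R_{\max}:=\sup_{s,a}|r(s,a)|$. Iterating this inequality yields $\|V_t\|_\infty \le M:=\max\{\|V_0\|_\infty,\,R_{\max}/(1-\gamma)\}$ for all $t$, and the same bound holds for $V^\star(\omega)$. Let $B\subset\mathcal B$ be the closed ball of radius $M$; the uniform-convergence assumption applied to $B$ then ensures that $\varepsilon_t(\omega):=\|\mathcal T_{\mathrm{rob}}^t(\omega)V_t-\mathcal T_\infty(\omega)V_t\|_\infty \to 0$.

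The key convergence step is a standard perturbed-contraction argument. Setting $a_t:=\|V_t-V^\star\|_\infty$ and adding/subtracting $\mathcal T_\infty V_t$, the triangle inequality and the $\gamma$-contraction of $\mathcal T_\infty$ give
\begin{equation*}
a_{t+1}\le \|\mathcal T_{\mathrm{rob}}^t V_t-\mathcal T_\infty V_t\|_\infty + \|\mathcal T_\infty V_t-\mathcal T_\infty V^\star\|_\infty \le \varepsilon_t+\gamma a_t.
\end{equation*}
Unrolling yields $a_t\le \gamma^t a_0+\sum_{k=0}^{t-1}\gamma^{t-1-k}\varepsilon_k$. The geometric term vanishes, and a Toeplitz-type argument, splitting the sum at an index $N$ past which $\varepsilon_k$ is small, shows that the convolution term also has zero limit; hence $a_t\to 0$ on $\Omega_0$.

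The main obstacle, in my view, is not the deterministic analysis above but the measurability and null-set bookkeeping around $\mathcal T_\infty$. The theorem asserts uniform convergence only on bounded sets, so one must first prove that the iterates never leave a ball, which requires the uniform bound on one-stage rewards from Assumption~\ref{ass:disc}; without this, the hypothesis would be vacuous along the actual trajectory. Second, one must verify that $\mathcal T_\infty$ inherits measurability from the $\mathcal F_t$-measurable operators $\mathcal T_{\mathrm{rob}}^t$ so that $V^\star$ is a genuine random element. Assumption~\ref{ass:greedy} on measurable tie-breaking is precisely what keeps the greedy outer maximization Borel-measurable, letting the inductively defined $V_t$ remain random variables and allowing the countable intersection of null events to collapse cleanly to a single $\Omega_0$ of full measure on which the pointwise argument can be carried out.
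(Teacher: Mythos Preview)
Your proposal is correct and follows essentially the same route as the paper: confine the iterates to a fixed ball via $\|\mathcal T_{\mathrm{rob}}^t V\|_\infty\le R_{\max}+\gamma\|V\|_\infty$, apply Banach's theorem to $\mathcal T_\infty$, and then run the perturbed-contraction recursion $a_{t+1}\le \gamma a_t+\varepsilon_t$ with the same unrolling and Toeplitz-type limit argument. Your treatment is in fact slightly more careful than the paper's on two points---you take $M=\max\{\|V_0\|_\infty,R_{\max}/(1-\gamma)\}$ rather than tacitly assuming $\|V_0\|_\infty\le R_{\max}/(1-\gamma)$, and you flag the measurability bookkeeping for $\mathcal T_\infty$ and $V^\star$ that the paper leaves implicit.
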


\begin{proof}
See Appendix~\ref{sec:proof}.
\end{proof}

\begin{remark}[Interpretation and application of Theorem~\ref{thm:as_convergence}]
Theorem~\ref{thm:as_convergence} refines pointwise, data-driven operator convergence to convergence of the computed values along the actual iteration run by the algorithm. The result guarantees that the algorithm converges almost surely for the observed data sequence. Uniform convergence on bounded sets ensures that the operator behaves consistently, regardless of the current iterate. A common modulus of contraction ensures that the limiting operator still satisfies the same $\gamma$-contraction property. Losing contraction leads to instability and the potential absence of a fixed point, so maintaining a uniform modulus is essential.
\end{remark}

Since value iteration converges to the fixed point of the limiting operator, the algorithm behaves as if the transition model were fixed once the credible sets stabilize. When they collapse to the true parameters, the limiting operator becomes the nominal Bellman operator. The following corollary formalizes this.

\begin{corollary}[Asymptotic optimality]\label{cor:asym_opt}
Under Assumptions \ref{ass:disc}--\ref{ass:greedy}, suppose that $U_v^t\to \{\theta_v^\star\}$ almost surely for all $v$. Define $U_{s,a}^t=\phi_{s,a}(U_{v(s,a)}^t)$ and $U_{s,a}^\infty:=\{\phi_{s,a}(\theta_{v(s,a)}^\star)\}$. Then, $\mathcal T_{\mathrm{rob}}^t\to \mathcal T_{\mathrm{nom}}^{\theta^\star}$ almost surely, uniformly on bounded subsets of $\mathcal B$, where
\begin{equation}
(\mathcal T_{\mathrm{nom}}^{\theta^\star}V)(s)\ =\ \max_{a\in\mathcal A(s)}\Big\{ r(s,a)+\gamma\,\mathbb E_{P^\star(\cdot\mid s,a)}[V(S')]\Big\}.
\end{equation}
By Theorem \ref{thm:as_convergence}, $V_t\to V^\star_{\mathrm{nom}}$ almost surely. If, in addition, the optimal action is unique at every state for the limit operator, then the greedy policies $\pi_t$ converge almost surely to the nominal optimal policy $\pi^\star_{\mathrm{nom}}$. Without uniqueness, any sequence of greedy policies admits convergent subsequences whose limits are optimal.
\end{corollary}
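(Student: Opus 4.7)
The plan is to first establish operator convergence $\mathcal T_{\mathrm{rob}}^t \to \mathcal T_{\mathrm{nom}}^{\theta^\star}$ uniformly on bounded subsets of $\mathcal B$, then invoke Theorem~\ref{thm:as_convergence} to conclude $V_t \to V^\star_{\mathrm{nom}}$ almost surely, and finally deduce policy convergence via an upper hemicontinuity argument on the argmax correspondence.

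For the first step, fix a bounded ball $B = \{V : \|V\|_\infty \le M\} \subset \mathcal B$ and a state--action pair $(s,a)$ associated with node $v = v(s,a)$. Writing the robust state--action backup as
\[
Q^t_{s,a}(V) \;=\; \min_{\theta \in U_{s,a}^t} \Big\{ r(s,a,\theta) + \gamma\, \mathbb E_{P(\cdot \mid s,a,\theta)}[V(S')] \Big\},
\]
the threat parameter enters through the map $\phi_{s,a}$ together with the observation and exposure models. Assumption~\ref{ass:cont} makes both $\theta \mapsto r(s,a,\theta)$ and $\theta \mapsto P(\cdot \mid s,a,\theta)$ in total variation continuous on the compact set $\Theta$, so for each fixed $V \in B$ the inner objective is continuous in $\theta$. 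PK convergence $U_{s,a}^t \to \{\phi_{s,a}(\theta_v^\star)\}$ of compact sets, combined with continuity of the objective, then forces the inner minimum to converge to the value at $\phi_{s,a}(\theta_v^\star)$, which coincides with the nominal backup $r^\star(s,a) + \gamma\, \mathbb E_{P^\star(\cdot \mid s,a)}[V(S')]$. Uniformity in $V \in B$ follows because the modulus of continuity of $\theta \mapsto \gamma\, \mathbb E_{P(\cdot \mid s,a,\theta)}[V(S')]$ is bounded by $\gamma M$ times the total-variation modulus of $\theta \mapsto P(\cdot \mid s,a,\theta)$, an expression that is independent of $V$. Finiteness of $\mathcal A(s)$ (Assumption~\ref{ass:greedy}) and of $\mathcal S'$ then preserves uniform convergence after the outer $\max_a$ and pointwise in $s$.

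Next, since each $\mathcal T_{\mathrm{rob}}^t$ carries the common contraction modulus $\gamma$ (Assumption~\ref{ass:rect}), the limit $\mathcal T_{\mathrm{nom}}^{\theta^\star}$ inherits the same contraction, and the hypotheses of Theorem~\ref{thm:as_convergence} are met with $\mathcal T_\infty = \mathcal T_{\mathrm{nom}}^{\theta^\star}$. Direct application of that theorem yields $V_t \to V^\star_{\mathrm{nom}}$ almost surely. For policy convergence, define greedy selectors $\pi_t(s) \in \arg\max_{a \in \mathcal A(s)} Q^t_{s,a}(V_t)$ using the fixed tie-breaking rule. Since $Q^t_{s,a}(V_t) \to Q^\star_{s,a}(V^\star_{\mathrm{nom}})$ and $\mathcal A(s)$ is finite, the argmax correspondence is upper hemicontinuous with nonempty, compact values. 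When the optimal action is unique at every $s$, on a full-measure event there is a random time beyond which $\pi_t(s) = \pi^\star_{\mathrm{nom}}(s)$, yielding almost-sure pointwise convergence of the greedy policy. Without uniqueness, finiteness of $\mathcal A(s)$ permits extraction of convergent subsequences, and upper hemicontinuity guarantees that every limit point lies in the argmax set under $\mathcal T_{\mathrm{nom}}^{\theta^\star}$, hence is optimal.

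The main obstacle will be the joint variation of $V$ and the transition kernel inside the robust expectation, because $|\mathbb E_{P_1}[V] - \mathbb E_{P_2}[V]|$ scales with $\|V\|_\infty$ in general. Restricting to bounded subsets of $\mathcal B$ and exploiting the uniform bound $\|V\|_\infty \le R_{\max}/(1-\gamma)$ implied by Assumption~\ref{ass:disc} resolves this, but the argument must carefully verify that the modulus of continuity in $\theta$ can be chosen independently of $V$ across the entire ball, so that the convergence feeds into Theorem~\ref{thm:as_convergence} in the required uniform sense. A secondary subtlety concerns measurability of the greedy selector: the fixed tie-breaking rule in Assumption~\ref{ass:greedy} supplies a Borel-measurable action map, so each $\pi_t$ is a well-defined random policy, and its almost-sure limit inherits the same measurability.
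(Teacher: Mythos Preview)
Your proposal is correct and follows essentially the same route as the paper: establish $\mathcal T_{\mathrm{rob}}^t\to\mathcal T_{\mathrm{nom}}^{\theta^\star}$ via PK set convergence plus continuity of the stage and transition maps, invoke Theorem~\ref{thm:as_convergence} for value convergence, and then obtain policy convergence from upper hemicontinuity of the finite $\arg\max$. One small notational slip: you write $\min_{\theta\in U_{s,a}^t}$, but by definition $U_{s,a}^t=\phi_{s,a}(U_v^t)\subseteq\Delta(\mathcal S')$ is a set of transition kernels, so the minimization should read $\min_{\theta\in U_v^t}$ (or equivalently $\min_{P\in U_{s,a}^t}$); otherwise your uniformity-in-$V$ argument via the total-variation modulus is actually more explicit than the paper's, which simply asserts the uniform limit after treating the stage term $\min_{\theta\in U_v^t} f_{v,a}(\theta)$ and the transition term $\min_{P\in U_{s,a}^t}\mathbb E_P[V(S')]$ separately.
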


\begin{remark}[Operational meaning of asymptotic optimality]
As the credible sets contract to the true parameter, the uncertainty in the transition kernel disappears, and the robust formulation converges to the planning problem defined by the true model. The algorithm therefore transitions from conservative exploration to exploitation and ends up computing exactly the nominal optimal policy.
Uniformity on bounded sets is needed to interchange limits with the outer maximization and to avoid dependence on the particular trajectory of $V_t$. This is what converts operator convergence into value convergence along the value sequence generated by the algorithm. Uniqueness gives pointwise convergence of policies. When multiple actions are optimal, the result still guarantees that any limit point is optimal.
\end{remark}

\subsubsection{Safety and Computational Properties}
While asymptotic optimality characterizes the long-run behavior of the adaptive planner, safety concerns arise during learning: the model is uncertain, and policies are selected based on ambiguity sets rather than the true transition kernel. A natural question is whether the robust value used for planning provides conservative guarantees on realized performance under the true dynamics. The next proposition shows that Assumption~\ref{ass:rect} yields such probabilistic safety guarantees.

\begin{proposition}[Probabilistic safety guarantee under $(s,a)$-rectangular uncertainty]\label{prop:safety} Let $(\Omega,\mathcal F,\mathbb P)$ be a probability space and $\{\mathcal F_t\}$ a filtration, i.e., an increasing family of $\sigma$-algebras representing the information available up to time $t$. For each node $v$, let $U_v^t\subseteq\Theta$ be an $\mathcal F_t$-measurable Borel set. Assume the following structural conditions. There is a partition of state–action pairs induced by a mapping $v:\mathcal S\times\mathcal A \to \mathcal V$, 
where $v(s,a)$ denotes the node to which the pair $(s,a)$ belongs, and Borel measurable maps
\begin{equation}
P(\cdot \mid s,a;\theta) \ :=\ \phi_{s,a}\big(\theta_{v(s,a)}\big), \qquad
\phi_{s,a}:\Theta_{v(s,a)} \to \Delta(\mathcal S').
\end{equation}
Define the $(s,a)$-marginal ambiguity sets by
\begin{equation}
U_{s,a}^t\ :=\ \phi_{s,a}\big(U_{v(s,a)}^t\big)\ \subseteq\ \Delta(\mathcal S'),
\end{equation}
and the rectangular ambiguity set
\begin{equation}
U^t\ :=\ \prod_{(s,a)} U_{s,a}^t\ \subseteq\ \prod_{(s,a)}\Delta(\mathcal S').
\end{equation}
where $\prod$ denotes the Cartesian product over sets. Each $U_v^t$ is nonempty and compact by Assumption~\ref{ass:post}. Each $\phi_{s,a}$ is continuous by Assumption~\ref{ass:cont}. Hence $U_{s,a}^t$ is nonempty and compact, and $U^t$ is nonempty and compact. Let the credible event be
\begin{equation}
\mathcal E_t:=\{\theta_v^\star\in U_v^t \text{ for all } v\},
\end{equation}
and let $P^\star$ be the true transition model generated by the true local parameters, that is, $P^\star(\cdot\mid s,a)=\phi_{s,a}\left(\theta_{v(s,a)}^\star\right)$. Let $\pi_t$ be any stationary policy greedy with respect to the robust value under rectangular ambiguity set $U^t=\prod_{(s,a)}U_{s,a}^t$.
Then
\begin{equation}
\mathbb P\Big(\ \forall s\in\mathcal S':\ V^{\pi_t}_{P^\star}(s)\ \ge\ \min_{P\in U^t} V^{\pi_t}_P(s)\ \Big)\ \ge\ 1-\delta_t,
\qquad \delta_t:=\mathbb P(\mathcal E_t^c).
\end{equation}
Here, $\delta_t$ is the probability that the true parameter lies outside the credible sets, so with probability $1-\delta_t$ the ambiguity sets contain the true model and the robust value lower bound holds for all states in $\mathcal S'$.
\end{proposition}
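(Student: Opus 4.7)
The plan is to reduce the probabilistic safety claim to a deterministic set inclusion on the credible event $\mathcal E_t$, and then to transfer that inclusion into the stated value inequality directly from the definition of the pointwise minimum over $U^t$. Conceptually, the only nontrivial content of the proposition is that $(s,a)$-rectangularity makes the true kernel $P^\star$ lie inside the product set $U^t$ as soon as every local true parameter $\theta_v^\star$ lies in its own credible set $U_v^t$; once that happens, the minimum in $P$ automatically lower-bounds the value at $P^\star$.

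First I would fix $\omega\in\mathcal E_t$ and show $P^\star(\omega)\in U^t(\omega)$. By the structural assumption, each entry of $P^\star$ has the form $P^\star(\cdot\mid s,a)=\phi_{s,a}(\theta_{v(s,a)}^\star)$. On $\mathcal E_t$ we have $\theta_v^\star\in U_v^t$ for every node $v$, so applying $\phi_{s,a}$ yields $P^\star(\cdot\mid s,a)\in\phi_{s,a}(U_{v(s,a)}^t)=U_{s,a}^t$. Because $U^t$ is defined as the Cartesian product $\prod_{(s,a)}U_{s,a}^t$ (this is exactly where rectangularity enters), the collection $\{P^\star(\cdot\mid s,a)\}_{(s,a)}$ belongs to $U^t$ as a whole.

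Next I would deduce the pointwise value bound. Each $U_v^t$ is compact by Assumption~\ref{ass:post}, and each $\phi_{s,a}$ is continuous by Assumption~\ref{ass:cont}, so every $U_{s,a}^t$ and hence $U^t$ is compact. Together with Assumption~\ref{ass:disc}, standard discounted-MDP theory makes $P\mapsto V^{\pi_t}_P(s)$ continuous on $U^t$, so $\min_{P\in U^t}V^{\pi_t}_P(s)$ is attained. Since $P^\star\in U^t$ is a feasible point, $V^{\pi_t}_{P^\star}(s)\ge \min_{P\in U^t}V^{\pi_t}_P(s)$ for every $s\in\mathcal S'$ as a deterministic consequence, with no additional appeal to the specific form of $\pi_t$ beyond its being a stationary policy.

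Finally, I would wrap this up in measure-theoretic language. The event $\{\forall s\in\mathcal S':\ V^{\pi_t}_{P^\star}(s)\ge\min_{P\in U^t}V^{\pi_t}_P(s)\}$ contains $\mathcal E_t$, so monotonicity of $\mathbb P$ gives the lower bound $1-\delta_t$. The main technical obstacle I foresee is not conceptual but purely measurability bookkeeping: one must confirm that $\omega\mapsto U^t(\omega)$ is a random compact set and that $\omega\mapsto\min_{P\in U^t(\omega)}V^{\pi_t(\omega)}_P(s)$ is $\mathcal F_t$-measurable, so that the displayed probability is well defined. This follows from a standard measurable-selection argument using Borel measurability of $U_v^t$, continuity of $\phi_{s,a}$, continuity of $P\mapsto V^{\pi_t}_P(s)$ on the simplex, and $\mathcal F_t$-measurability of $\pi_t$ as the greedy selector under the fixed tie-breaking rule of Assumption~\ref{ass:greedy}. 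Once these measurability items are in place, the claimed probability bound is immediate from the event inclusion above.
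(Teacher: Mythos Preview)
Your proposal is correct and follows essentially the same route as the paper: on $\mathcal E_t$ each $\theta_v^\star\in U_v^t$ implies $P^\star(\cdot\mid s,a)\in U_{s,a}^t$ via $\phi_{s,a}$, hence $P^\star\in U^t$ by rectangularity, and then the value inequality and the probability bound follow by event inclusion. Your additional attention to compactness (so the minimum is attained) and to measurability of $\omega\mapsto\min_{P\in U^t(\omega)}V^{\pi_t(\omega)}_P(s)$ goes slightly beyond the paper's proof, which simply takes these points for granted.
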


\begin{proof}
See Appendix~\ref{sec:proof}.
\end{proof}

\begin{remark}[Safety guaranteed by the robust value]
Conditioned on the event that the true parameters lie in the credible sets, the realized value under the true kernel dominates the worst-case value used for planning. In other words, the planner's predicted robust value is a conservative lower bound on its actual performance.
The shortfall term $\delta_t$ is the probability that the credible set does not include the true parameter at time $t$. Improving statistical coverage is the primary lever to tighten the guarantee. Under $(s,a)$-rectangularity, the inner minimization is exact on each local transition model, so the robust planner already evaluates the worst-case model without approximation.
The result holds uniformly with respect to the state and for any greedy policy with respect to the robust value, so it applies to both evaluation and improvement steps.
\end{remark}

Beyond safety, it is important to understand the computational cost of implementing the adaptive robust planner. Each replan operates with a fixed family of credible sets and performs robust value iteration until convergence. The following proposition provides a per-replan complexity bound and clarifies how the uncertainty size affects the computational cost.

\begin{proposition}[Per-replan complexity]\label{prop:complexity}
During each replan, the family of credible sets $\{U_v^t\}$ are fixed, and robust value iteration is performed until the value updates are within a tolerance $\varepsilon$.
Since the Bellman operator is a $\gamma$-contraction, the value iteration requires $N_\varepsilon = O(\log(1/\varepsilon)/(1-\gamma))$ iterations to ensure that the value updates are within tolerance $\varepsilon$. Each robust reward $\tilde r_{\mathrm{rob}}(v,a,U_v^t)$ can be evaluated in $O(\overline{|U^t|})$ time using precomputed expectations, where $\overline{|U^t|}$ is the average ambiguity set size across nodes. Then, the per-replan computational cost is
\begin{equation}
\mathcal O\!\left(N_{\varepsilon}\,\big(|\mathcal S'|\,\overline{|\mathcal A|} \;+\; |\mathcal S|\,|\mathcal A_S|\,\overline{|U^t|}\big)\right),
\end{equation}
where $\overline{|\mathcal A|}$ is the average number of actions per state. 
\end{proposition}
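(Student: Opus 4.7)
The plan is to decompose the per-replan cost into two multiplicative factors: the number of value-iteration sweeps $N_\varepsilon$ required to reach tolerance, and the cost of a single sweep of the robust Bellman operator over the two-phase state space $\mathcal S'$. Because the credible sets $\{U_v^t\}$ are frozen during a replan, the operator $\mathcal T_{\mathrm r}$ is time-homogeneous for the duration of the inner loop, so standard contraction analysis applies and the two factors can be bounded independently and then multiplied.

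First I would invoke Assumption~\ref{ass:rect} together with Assumption~\ref{ass:disc}: under $(s,a)$-rectangularity and bounded rewards, $\mathcal T_{\mathrm r}$ is a $\gamma$-contraction on $(\mathbb R^{|\mathcal S'|},\|\cdot\|_\infty)$ with a unique fixed point $V^\star$ whose sup-norm is at most $R_{\max}/(1-\gamma)$. By Banach's fixed-point theorem, $\|V_k-V^\star\|_\infty \le \gamma^k\|V_0-V^\star\|_\infty$, so requiring this error to fall below $\varepsilon$ yields $k \ge \log(\|V_0-V^\star\|_\infty/\varepsilon)/\log(1/\gamma)$. Using the elementary bound $\log(1/\gamma)\ge 1-\gamma$ for $\gamma\in(0,1)$ gives $N_\varepsilon = O(\log(1/\varepsilon)/(1-\gamma))$, matching the stated iteration count.

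Next I would bound the cost of a single sweep by auditing the two Bellman equations defining $\mathcal T_{\mathrm r}$. For each move state $(v,\mathrm M')$, the update ranges over $|\mathcal A_{\mathrm M'}(v)| = |\mathcal N(v)|$ neighbors and each candidate requires $O(1)$ work since the transition under a movement action is deterministic; summing over all $v$ gives $O(|\mathcal S'|\,\overline{|\mathcal A|})$ arithmetic operations per sweep, where $\overline{|\mathcal A|}$ absorbs the average neighbor count. For each sense state $(v,\mathrm S')$, the update maximizes over $|\mathcal A_S|$ actions, and evaluating $\tilde r_{\mathrm{rob}}(v,a,U_v^t)$ requires a minimum over $U_v^t$ of precomputed, cached quantities $\mathbb E_{p(o\mid a,\theta)}[o]$, $p_{\mathrm{exp}}(a,\theta)$, and $c_{\mathrm{sense}}(a)$, which costs $O(|U_v^t|)$ per $(v,a)$ pair. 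Summing over the $|\mathcal S|$ sense states and averaging gives $O(|\mathcal S|\,|\mathcal A_S|\,\overline{|U^t|})$. Multiplying the per-sweep cost by $N_\varepsilon$ yields the claimed bound.

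The main subtlety, rather than a deep obstacle, is justifying the precomputation assumption: one must argue that the threat-conditioned expectations are cached once at the start of the replan so that the inner minimum over $U_v^t$ is truly $O(|U_v^t|)$ rather than $O(|U_v^t|\cdot|\mathcal O|)$. This relies on the observation that these expectations depend only on $(a,\theta)$ and not on the iterate $V_k$, so they factor out of the contraction loop; their one-time construction cost is $O(|\mathcal A_S|\,|\Theta|)$ and is dominated by the iteration term whenever $N_\varepsilon\ge 1$. The deterministic nature of movement transitions likewise removes any hidden $|\mathcal S'|$ factor from the move-state backup, which is what allows the move and sense contributions to appear additively rather than multiplicatively in the final expression.
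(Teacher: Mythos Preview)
Your proposal is correct and follows exactly the reasoning the paper itself sketches inside the proposition statement: bound $N_\varepsilon$ via the $\gamma$-contraction, then account separately for the move-state backups (deterministic transitions, $O(|\mathcal S'|\,\overline{|\mathcal A|})$) and the sense-state backups (an inner minimum over $U_v^t$ using cached expectations, $O(|\mathcal S|\,|\mathcal A_S|\,\overline{|U^t|})$), and multiply. The paper's appendix does not actually contain a separate proof of this proposition beyond the in-text justification, so your write-up is, if anything, more detailed than what appears in the manuscript; the precomputation remark you flag is precisely the ``using precomputed expectations'' clause the proposition invokes.
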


\begin{proof}
See Appendix~\ref{sec:proof}.
\end{proof}

\begin{remark}[Interpretation of complexity]
The two terms reflect distinct computations within each replan: the first term accounts for Bellman updates over state space $\mathcal S'$, and the second term for robust evaluations within ambiguity sets. Together, the two terms represent the total per-replan workload. As learning progresses and uncertainty about the parameters decreases, the algorithm needs to evaluate fewer models within each ambiguity set. Consequently, the expected computational cost per replan gradually approaches the standard value iteration under the true model.
\end{remark}

\section{Results}\label{sec:experiments}
We conduct three numerical experiments under different environmental and structural conditions to validate the proposed adaptive robust planner. Experiment~1 tests whether the planner operates as intended and converges in a controlled environment with Gaussian threat models. Experiment~2 tests robustness and generalization to non-Gaussian threat models, including Gaussian mixtures and log-normal distributions. Experiment~3 evaluates the scalability and consistency of the planner as the network topology and size vary. 

\subsection{Baseline Implementations}
To evaluate the performance of the proposed adaptive robust planner, we compare it against two baselines: \textit{static robust} and \textit{nominal} planning. These baselines are two complementary extremes in terms of uncertainty handling and adaptability. Both baselines share the same sensing–movement structure and update routines as Algorithm~\ref{alg:adaptive_robust}, but differ in how they treat and propagate uncertainty during planning.

In the static robust baseline, the ambiguity sets of possible threat types remain fixed throughout the entire mission. The logic of the static robust planner is summarized in \Cref{alg:static_robust}. It optimizes actions through a minmax value-iteration process that selects the best decision under the worst-case threat configuration within each static set. As a result, it provides strong safety guarantees but is often overly conservative. Since the uncertainty representation is never refined, the planner tends to overestimate risk and produce cautious but suboptimal behavior. 

\begin{algorithm}[htbp]
\caption{Static Robust Planning}
\label{alg:static_robust}
\begin{algorithmic}[1]
\State Initialize fixed parameter sets $\widebar U_v \subseteq \Theta$ for all $v$ and the ambiguity set is induced as 
$U_{s,a} = \phi_{s,a}(\widebar U_v)$.
\For{$t = 0, 1, \ldots, T-1$}
    \For{each sense action $a \in \mc A_{\text{S}'}$}
    \State Compute static robust surrogate:
        \(
        \tilde r_{\mathrm{r}'}(v,a)
        = \min_{P \in U_{s,a}}
           \Big[
             \mathbb{E}_{P(o\,|\,a)}[o]
             + \lambda_{\mathrm{imm}}\big(-p_{\mathrm{exp}}(a,P)\big)
           \Big]
           - c_{\mathrm{sense}}(a)
        \)
    \EndFor
    \State Choose action:
    \(
    a_t = \arg\max_{a \in \mc A_{\text{S}'}}
      \big\{
        \tilde r_{\mathrm{r}'}(v_t,a)
        + \lambda_{\mathrm{nov}}n_t(v)
        + \gamma V(v_t,\mathrm{M}')
      \big\}
    \)
\EndFor
\end{algorithmic}
\end{algorithm}

The nominal baseline uses the maximum a \emph{posteriori} (MAP) principle to address uncertainty. The overall procedure of the nominal planner is outlined in \Cref{alg:nominal}. After each observation, the planner infers the most probable threat type at each node based on the current posterior belief. The planner then selects the next actions, assuming those estimates are correct. This approach reflects an optimistic decision-making strategy that commits to the maximum-probability threat hypothesis under the current posterior. It enables efficient and confident decisions when the belief model is accurate. However, it can become overconfident and fragile in highly uncertain or noisy environments.

\begin{algorithm}[htbp]
\caption{Nominal Planning}
\label{alg:nominal}
\begin{algorithmic}[1]
\State Initialize belief $b_{0,v}(\theta)=1/|\Theta|$ for all $v \in \mc S$
\For{$t = 0, 1, \ldots, T-1$}
    \State Observe signal $o_t$ at current node $v_t$
    \State Update belief via Bayes rule:
    \(
      b_{t+1,v_t}(\theta)
      = \frac{b_{t,v_t}(\theta)\,p(o_t\,|\,a_t,\theta)}
             {\sum_{\theta' \in \Theta} b_{t,v_t}(\theta')\,p(o_t\,|\,a_t,\theta')}
    \)
    \State Compute MAP estimate:
    \(
    \hat{\theta}_{v_t}
    = \arg\max_{\theta} b_{t+1,v_t}(\theta)
    \)
    \For{each sense action $a \in \mc A_{\text{S}'}$}
        \State Compute nominal surrogate:
        \(
        \tilde r_{\mathrm{n}'}(v_t,a)
        = \mathbb{E}_{p(o\,|\,a,\hat{\theta}_{v_t})}[o]
          + \lambda_{\mathrm{imm}}\big(-p_{\mathrm{exp}}(a,\hat{\theta}_{v_t})\big)
          - c_{\mathrm{sense}}(a)
        \)
    \EndFor
    \State Choose action:
    \(
    a_t = \arg\max_{a \in \mc A_{\text{S}'}}
      \big\{
        \tilde r_{\mathrm{n}'}(v_t,a)
        + \lambda_{\mathrm{nov}}n_t(v)
        + \gamma V(v_t,\mathrm{M}')
      \big\}
    \)
\EndFor
\end{algorithmic}
\end{algorithm}

\subsection{Experiment 1: Basic Validation in a Controlled Environment}
This experiment verifies that the proposed adaptive robust planner operates as intended in a simplified and well-controlled setting. We consider a small grid network in which each threat type induces Gaussian observation and exposure models, allowing us to confirm that the planner converges to stable policies, correctly updates its ambiguity sets, and reproduces the expected behavior under idealized conditions. This setting conceptually reflects controlled ISR mission test ranges where conditions are intentionally idealized \cite{billingsley2002low}, providing a clean environment for baseline validation before moving to larger and more uncertain scenarios.

The environment is configured as a \(3\times4\) grid graph consisting of 12 nodes. Each node is assigned one of three latent threat types, denoted as \(\theta \in \{1, 2, 3\}\). The agent can choose among four sensing actions \(a \in \{A, B, C, D\}\), each producing an observation \(o \sim \mathcal{N}(\mu_{a,\theta}, \sigma_{a,\theta})\) and an associated exposure score that determines exposure risk. Exposure follows a Gaussian model with mean \(\mu^{\text{haz}}_{a,\theta}\) and standard deviation \(\sigma^{\text{haz}}_{a,\theta}\); an exposure event occurs when the score exceeds a threshold of 0.5, chosen as a neutral cutoff for the experiment. The immediate sensing reward combines the observation value, sensing cost, and exposure penalty, while movements between neighboring nodes incur a fixed cost. A discount factor \(\gamma = 0.98\) and additional penalty components model persistent and cumulative exposure effects. The key simulation parameters are summarized in \Cref{tab:exp1_params}.

\begin{table}[htbp]
\centering
\caption{Key simulation parameters for Experiments~1}
\label{tab:exp1_params}
\begin{tabular}{llc}
\toprule
\textbf{Parameter} & \textbf{Description} & \textbf{Value / Setting} \\
\midrule
Threat types & $\theta \in \{1, 2, 3\}$ & 3 categories \\
Sensing actions & $a \in \text{\{A, B, C, D\}}$ & 4 actions \\
Observation model & $o \sim \mathcal{N}(\mu_{a,\theta}, \sigma_{a,\theta})$ & Gaussian \\
Exposure model & $\eta_{v,t} \sim \mathcal{N}\left(\mu^{\mathrm{haz}}_{a,\theta}, \sigma^{\mathrm{haz}}_{a,\theta}\right)$ & Gaussian \\
Exposure threshold & $\tau_{\eta}$ & 0.5 \\
Discount factor & $\gamma$ & 0.98 \\
Movement cost & $c_{\mathrm{move}}$ & 1.0 \\
Sensing cost & $c_{\mathrm{sense}}(a)$ & $\{1.0,\,1.0,\,1.0,\,0.1\}$ \\
Immediate exposure weight & $\lambda_{\mathrm{imm}}$ & 50.0 \\
Persistent decay rate & $\alpha$ & 0.95 \\
Persistent exposure weight & $\lambda_{\mathrm{pers}}$ & 0.1 \\
Cumulative exposure weight & $\lambda_{\mathrm{cum}}$ & 0.0005 \\
Novelty weight & $\lambda_{nov}$ & 1\\
Novelty decay rate & $\beta$ & 0.8 \\
Replan frequency & $k$ (steps per replan) & 1 \\
\bottomrule
\end{tabular}
\end{table}

We compare the adaptive robust planner against the static robust and nominal baselines under identical settings. Each planner operates for up to 2000 time steps, replanning at every step ($k=1$). Performance is evaluated based on three key metrics: (i) the average observation reward and cumulative reward over time, (ii) the convergence behavior of the average credible set size $|U_v^t|$ (for the adaptive robust planner only), and (iii) exposure statistics that capture both persistent and cumulative penalty effects. These metrics collectively illustrate how the planner balances reward maximization and risk mitigation as it progressively refines its uncertainty representation. 

All experiments use fixed random seeds for graph generation and threat initialization to ensure reproducibility while maintaining stochastic observation and exposure outcomes. Before presenting quantitative results, we visualize the underlying reward and hazard structures to clarify how the planner perceives the trade-off between sensing benefit and exposure risk in \Cref{fig:reward_distributions,fig:hazard_distributions}. The sensing actions A through D correspond to the four available sensing modes defined in the threat–observation model. As shown, each threat type favors a distinct action that simultaneously enables more advantageous decisions. Action~D, in contrast, serves as a safer default option when little information about the true threat type is available. Movement actions allow the aircraft to transition to adjacent nodes in the graph, but they are not depicted here since the figures focus solely on sensing-related reward and hazard patterns.

\begin{figure}[htbp]
    \centering
    \begin{subfigure}[b]{0.33\textwidth}
        \centering
        \includegraphics[width=\textwidth]{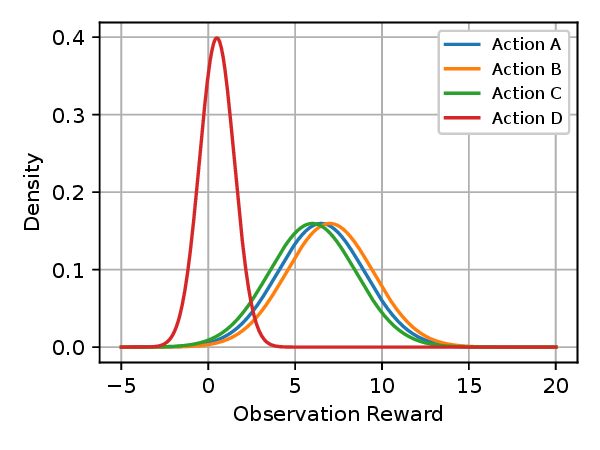}
        \caption{Threat 1}
    \end{subfigure}
    \begin{subfigure}[b]{0.33\textwidth}
        \centering
        \includegraphics[width=\textwidth]{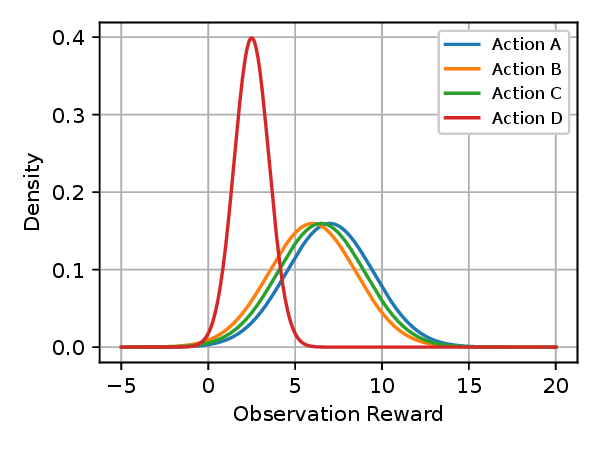}
        \caption{Threat 2}
    \end{subfigure}
    \begin{subfigure}[b]{0.33\textwidth}
        \centering
        \includegraphics[width=\textwidth]{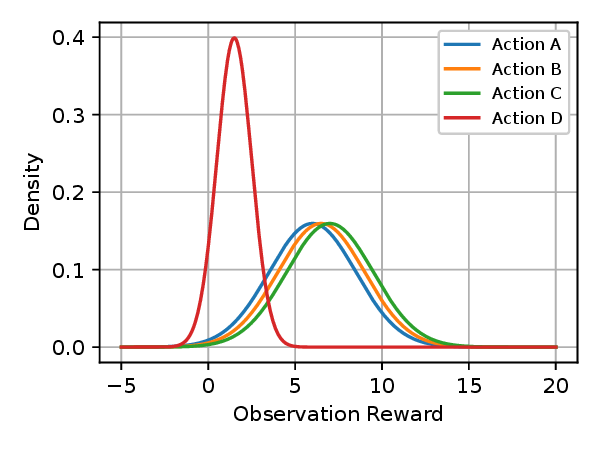}
        \caption{Threat 3}
    \end{subfigure}
    \caption{Comparison of reward distributions across different threat prototypes for Experiment~1. Each subplot shows the probability density of expected observation rewards for all available actions under a specific threat.}
    \label{fig:reward_distributions}
\end{figure}

\begin{figure}[htbp]
    \centering
    \begin{subfigure}[b]{0.33\textwidth}
        \centering
        \includegraphics[width=\textwidth]{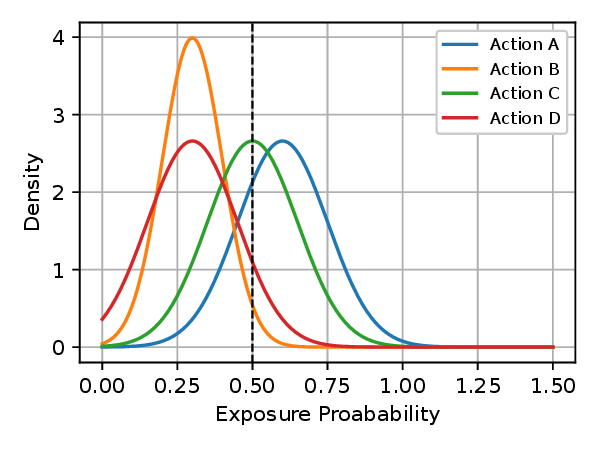}
        \caption{Threat 1}
    \end{subfigure}
    \begin{subfigure}[b]{0.33\textwidth}
        \centering
        \includegraphics[width=\textwidth]{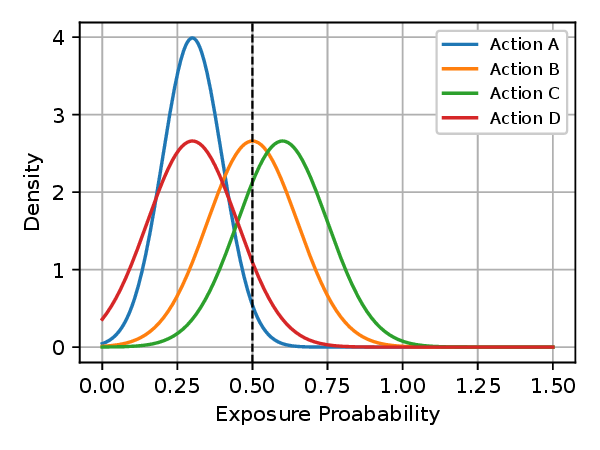}
        \caption{Threat 2}
    \end{subfigure}
    \begin{subfigure}[b]{0.33\textwidth}
        \centering
        \includegraphics[width=\textwidth]{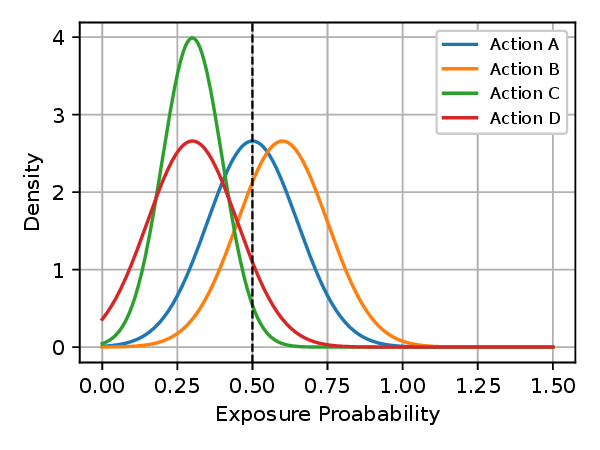}
        \caption{Threat 3}
    \end{subfigure}
    \caption{Exposure probability distributions corresponding to the threat prototypes in Experiment~1. The dashed vertical line indicates the decision threshold. Samples with exposure probability greater than~0.5 are considered exposed under the corresponding threat condition.}
    \label{fig:hazard_distributions}
\end{figure}

\begin{table}[htbp]
\centering
\caption{Summary of Experiment~1 Results. Higher values are better for observation and total reward, while lower values are better for cumulative exposures.}
\label{tab:exp1_summary}
\begin{tabular}{lccc}
\toprule
\textbf{Planner} & \textbf{Observation Reward} $\uparrow$ & \textbf{Cumulative Exposures} $\downarrow$ & \textbf{Total Reward} $\uparrow$ \\
\midrule
Adaptive Robust & 5882.44 $\pm$ 168.12 & \textbf{39.13 $\pm$ 11.19} & \textbf{9861.38 $\pm$ 1345.84} \\
Nominal & \textbf{6780.49 $\pm$ 108.32} & 240.92 $\pm$ 65.92 & $-7782.01$ $\pm$ 4887.74 \\
Static Robust & 1487.82 $\pm$ 233.79 & 89.64 $\pm$ 9.16 & 190.63 $\pm$ 858.26 \\
\bottomrule
\end{tabular}
\end{table}

\Cref{tab:exp1_summary} presents the results of Experiment~1, where each value is listed as mean~$\pm$~standard deviation across 100 runs. The three planners display clearly different behaviors. The adaptive robust planner performs best overall, achieving high total rewards while keeping the number of exposures low. This means it learns to gather useful information without taking excessive risks. As its belief about threats becomes more confident, the policy naturally shifts from cautious exploration to more efficient sensing and movement. The nominal planner, which assumes a single estimated threat model, gains higher observation rewards early on but suffers from heavy exposure penalties and unstable total returns. Its aggressive sensing leads to frequent exposures, showing that overconfidence under uncertainty can reduce mission survivability. The static robust planner takes the opposite approach: it keeps the ambiguity sets fixed, acts conservatively to avoid risk, but gathers little information and achieves low overall rewards. Together, these results demonstrate that the adaptive robust planner provides the best balance between safety and efficiency. It remains robust when uncertainty is high and approaches nominal-level performance as threat ambiguity at each node decreases.

Additionally, we also evaluated the computational efficiency of each planner to assess online feasibility. All experiments were conducted on a standard CPU node of the University of Michigan Great Lakes computing cluster, equipped with dual Intel Xeon Gold 6154 processors (36 cores, 3.0 GHz) and 187 GB RAM. Each run used 4 CPU cores and 8 GB of memory without GPU acceleration. For the 2000-step simulations in Experiment~1, the nominal planner required an average of 43.46~$\pm$~0.07~s (approximately 21.7~ms per step), the adaptive robust planner 63.82~$\pm$~0.47~s (31.9~ms per step), and the static robust planner 94.14~$\pm$~0.34~s (47.1~ms per step). All planners operated fast enough for online execution, with each decision step completed within tens of milliseconds. Notably, the adaptive robust planner ran faster than the static robust planner because it gradually narrowed the ambiguity set during execution. As the planner refined its uncertainty over time, the computations became simpler while still supporting real-time operation.

\begin{figure}[htbp]
    \centering
    \begin{subfigure}[b]{0.48\textwidth}
        \centering
        \includegraphics[width=\textwidth]{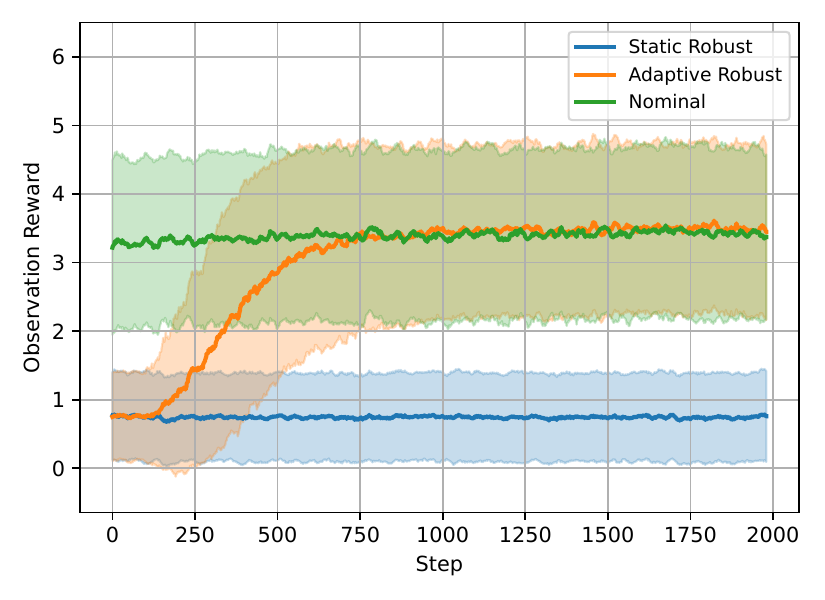}
        \caption{Observation rewards over steps.}
        \label{fig:reward_change_exp1_a}
    \end{subfigure}
    \begin{subfigure}[b]{0.48\textwidth}
        \centering
        \includegraphics[width=\textwidth]{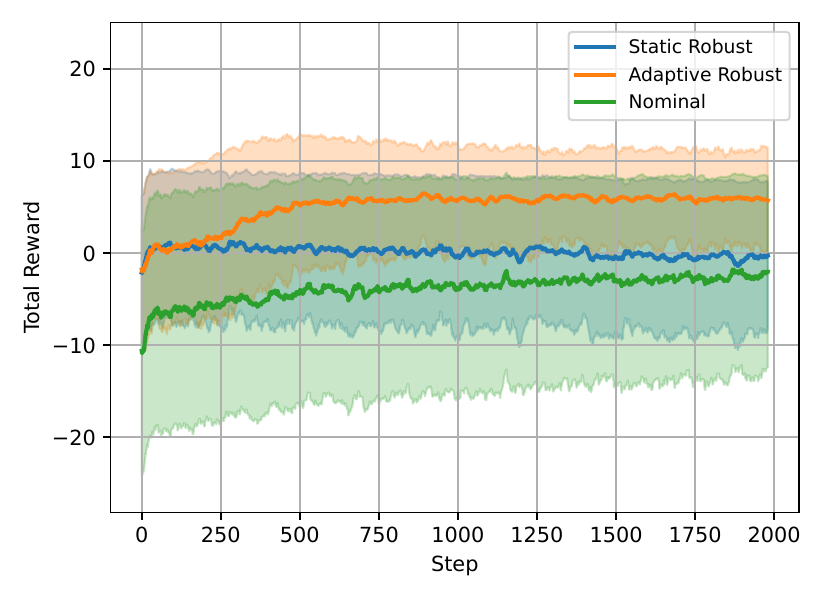}
        \caption{Total rewards over steps.}
        \label{fig:reward_change_exp1_b}
    \end{subfigure}
    \caption{Results of Experiment~1 comparing the reward evolution across three planning strategies. Both plots show 20-step moving-averaged results across random seeds. (a) Step-wise observation rewards. (b) Step-wise total rewards, including exposure penalties. Solid lines indicate the averaged performance across random seeds, and the shaded regions represent \(\pm\)1 standard deviation.}
    \label{fig:reward_change_exp1}
\end{figure}

\Cref{fig:reward_change_exp1} presents how each planner’s rewards evolve over time. The observation rewards are shown in \Cref{fig:reward_change_exp1_a}. Initially, the nominal planner earns higher rewards because of its optimistic decision-making approach. In contrast, both the adaptive robust and static robust planners begin with lower values due to their conservative strategies under ambiguity sets. Over time, the static robust planner maintains nearly constant rewards because its fixed ambiguity set leads to repetitive behavior under similar conditions. Conversely, the adaptive robust planner gradually shrinks its ambiguity set, increasing observation rewards that approach those achieved by the nominal planner. The observation rewards increase smoothly as each node in the graph maintains its own ambiguity set. Some nodes quickly identify one threat prototype as most likely, whereas others remain ambiguous and continue to consider several possible threat types.

\Cref{fig:reward_change_exp1_b} illustrates the evolution of total rewards, which incorporate exposure penalties. The adaptive robust planner consistently achieves the best performance among the three approaches. Although the nominal planner secures higher observation rewards, its exposure risk reduces its total gains. Despite gradual improvement through ongoing belief updates and MAP estimation, the nominal planner exhibits a wider standard deviation band, reflecting unstable performance. In contrast, the adaptive robust planner initially performs similarly to the static robust planner but soon improves as its ambiguity set contracts, achieving high, stable total rewards. Since the static robust planner maintains a larger ambiguity set than the adaptive robust planner at all times, it serves as a lower bound on performance under uncertainty.

\begin{figure}[htbp]
    \centering
    \begin{subfigure}[b]{0.48\textwidth}
        \centering
        \includegraphics[width=\textwidth]{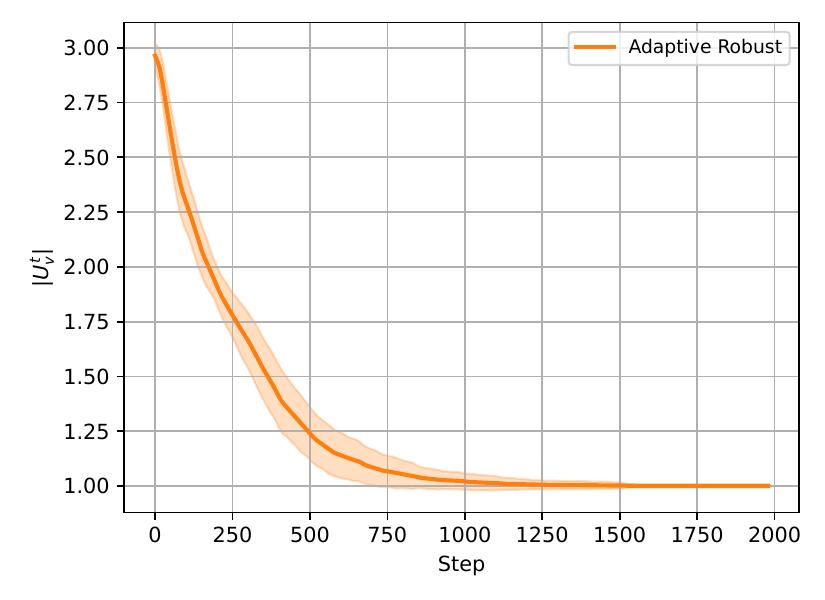}
        \caption{Average credible set size over steps.}
        \label{fig:results_others_exp1_a}
    \end{subfigure}
    \begin{subfigure}[b]{0.48\textwidth}
        \centering
        \includegraphics[width=\textwidth]{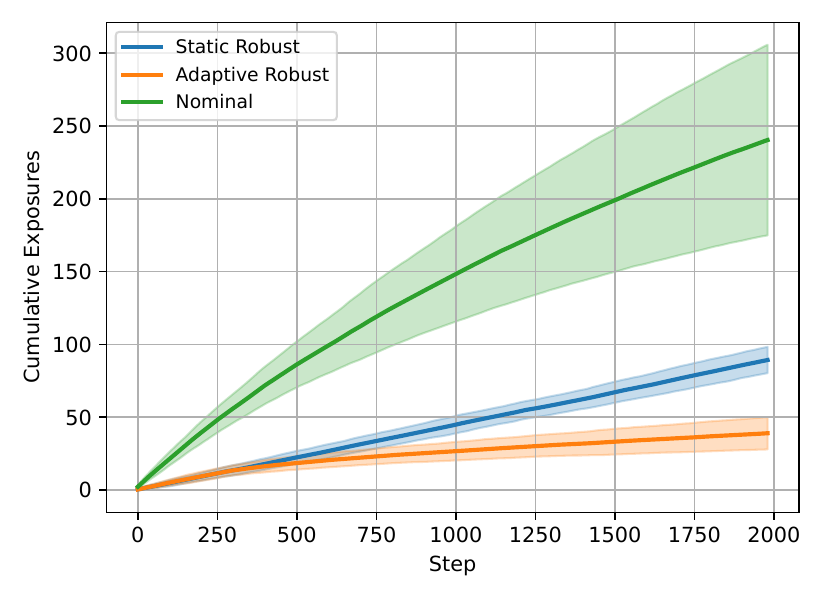}
        \caption{Cumulative exposures over steps.}
        \label{fig:results_others_exp1_b}
    \end{subfigure}
    \caption{Results of Experiment~1. (a)~Step-wise changes in the credible set size $|U_v^t|$ for the adaptive robust planner, illustrating how uncertainty is reduced over time. (b)~Cumulative number of exposures over steps, compared across the three planning strategies. Both plots present 20-step moving-averaged results across random seeds. The solid lines denote the mean, and the shaded regions represent one standard deviation.}
    \label{fig:results_others_exp1}
\end{figure}

\Cref{fig:results_others_exp1} illustrates how each planner behaves within the simulation environment beyond the reward metrics. \Cref{fig:results_others_exp1_a} shows the evolution of the average credible set size of the adaptive robust planner across all graph nodes over time. From step~0 to approximately~500, the credible set size decreases sharply, indicating a rapid reduction in uncertainty. Beyond this point, the curve flattens as the planner revisits a few nodes that remain difficult to classify. Eventually, the credible set size converges to 1 for all nodes, indicating that the threats associated with each node are correctly identified.

\Cref{fig:results_others_exp1_b} presents the cumulative exposures over time. The nominal planner exhibits a notably higher exposure rate than the others due to the optimistic policy. The adaptive robust and static robust planners perform similarly up to around step~500. After that, the adaptive robust planner shows lower exposure rates. This trend occurs because our experimental setup rewards planners which accurately identify each node’s threat, giving them an advantage as uncertainty decreases. Notably, the stepwise total reward can exceed the observation reward, as it includes a novelty term that provides a positive exploration bonus for visiting less-explored nodes. Together, these results demonstrate that the adaptive robust planner maintains robustness in uncertain environments while naturally improving efficiency, thereby achieving the intended balance between the two. 

\begin{figure}[htbp]
    \centering
    \includegraphics[width=0.7\linewidth]{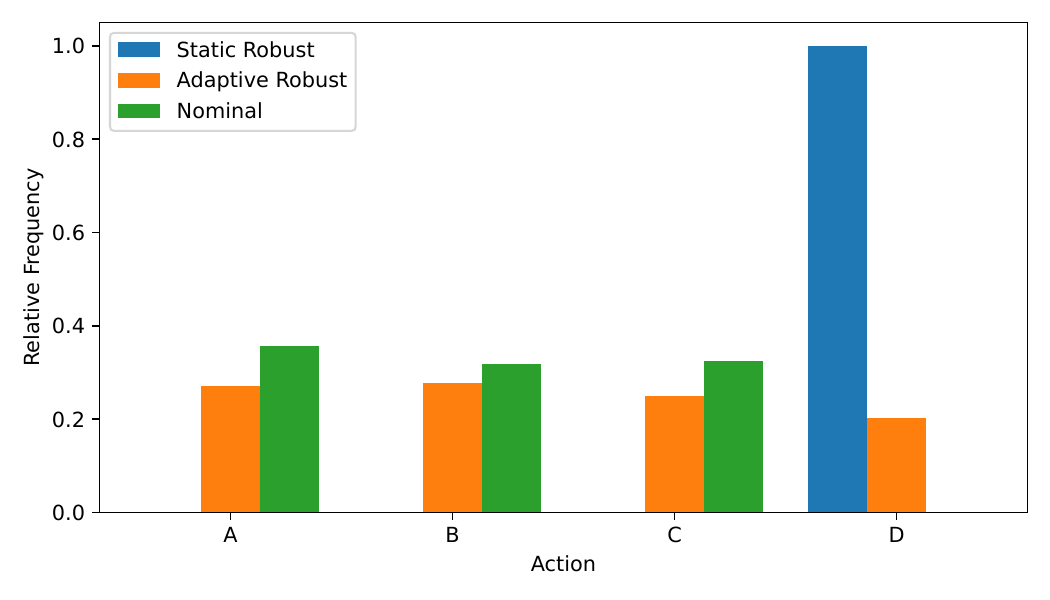}
    \caption{Distribution of sensing action selections across different policies in Experiment~1. Action labels (A–D) correspond to the mapped sense-action categories. Frequencies are normalized to show relative selection ratios.}
    \label{fig:exp1_action}
\end{figure}

\Cref{fig:exp1_action} illustrates how each planner selected sensing actions in Experiment~1, clearly revealing their distinct decision-making strategies. The static robust planner consistently chooses only action D, the safest option when the threat type is uncertain. This conservative behavior results in relatively low observation and total rewards. In contrast, the nominal planner, which relies on the MAP estimate, avoids action D and selects actions A, B, and C depending on its belief about each node’s threat type. The adaptive robust planner combined the strengths of both approaches, showing a balanced distribution across all four actions (A-D). It initially behaved conservatively, like the static robust planner, but gradually shifted toward higher-reward actions as the threat type became more certain.

\subsection{Experiment 2: Generalization to Non-Gaussian Threat Models}
We assess the planner’s robustness and adaptability under general threat models, including Gaussian mixtures and log-normal distributions. These distributions capture multimodal and heavy-tailed sensing patterns commonly observed in electronic warfare environments \cite{ward2013seaclutter}, allowing us to test whether the planner generalizes beyond the idealized Gaussian assumptions of Experiment~1. The modified experimental parameters are summarized in \Cref{tab:exp2_modified_final}. With the same 3×4 grid graph and the same action set as in Experiment~1, we modify the observation and exposure models to capture more diverse threat prototypes. The reward and exposure models used in Experiment~2 for all three threats are shown in \Cref{fig:reward_distributions_2,fig:hazard_distributions_2}. Specifically, Threats~1 and~2 use Gaussian mixture observation models, where a single Gaussian is treated as a special (degenerate) case, while Threat~3 adopts a log-normal observation model. Each run consists of 3000 planning steps and is simulated under 100 independent random seeds. We evaluate whether the adaptive robust planner remains effective under non-Gaussian uncertainty, exhibiting similar patterns in ambiguity refinement and exposure management to those observed in Experiment~1.

\begin{figure}[htbp]
    \centering
    \begin{subfigure}[b]{0.33\textwidth}
        \centering
        \includegraphics[width=\textwidth]{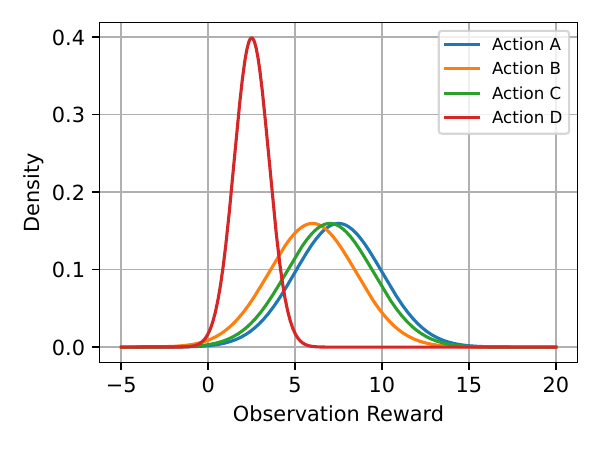}
        \caption{Threat 1}
    \end{subfigure}
    \begin{subfigure}[b]{0.33\textwidth}
        \centering
        \includegraphics[width=\textwidth]{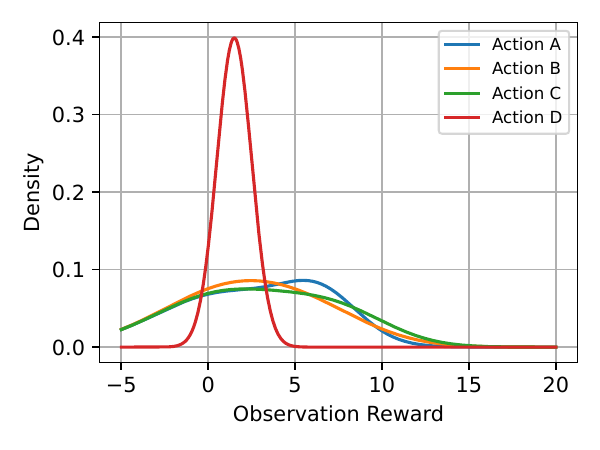}
        \caption{Threat 2}
    \end{subfigure}
    \begin{subfigure}[b]{0.33\textwidth}
        \centering
        \includegraphics[width=\textwidth]{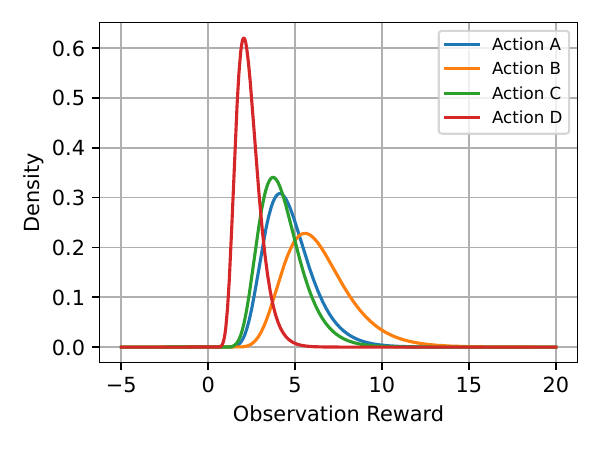}
        \caption{Threat 3}
    \end{subfigure}
    \caption{Comparison of reward distributions across different threat prototypes for Experiment~2. Each subplot shows the probability density of expected observation rewards for all available actions under a specific threat.}
    \label{fig:reward_distributions_2}
\end{figure}

\begin{figure}[htbp]
    \centering
    \begin{subfigure}[b]{0.33\textwidth}
        \centering
        \includegraphics[width=\textwidth]{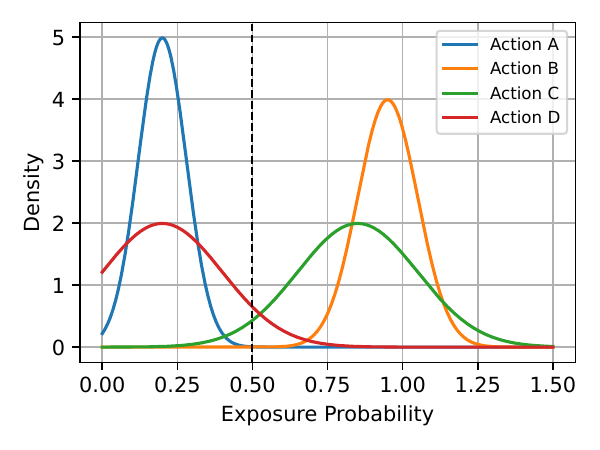}
        \caption{Threat 1}
    \end{subfigure}
    \begin{subfigure}[b]{0.33\textwidth}
        \centering
        \includegraphics[width=\textwidth]{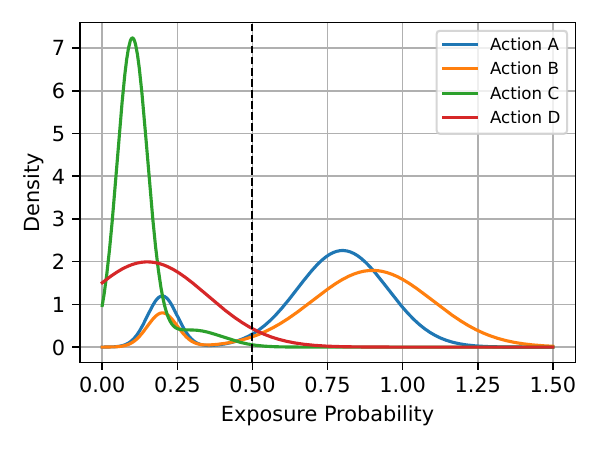}
        \caption{Threat 2}
    \end{subfigure}
    \begin{subfigure}[b]{0.33\textwidth}
        \centering
        \includegraphics[width=\textwidth]{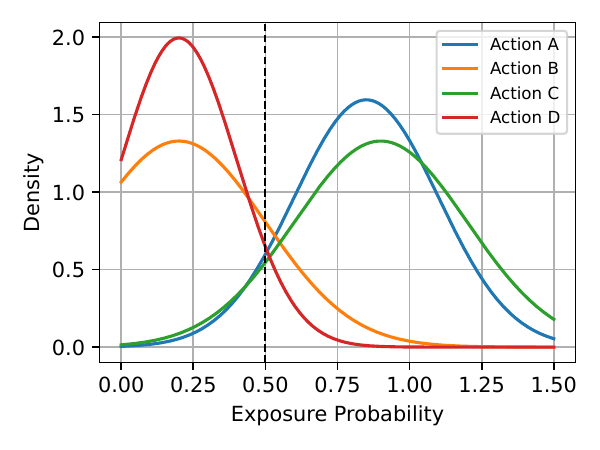}
        \caption{Threat 3}
    \end{subfigure}
    \caption{Exposure probability distributions corresponding to the same threat prototypes in Experiment~2. The dashed vertical line indicates the decision threshold. Samples with exposure probability greater than~0.5 are considered exposed under the corresponding threat condition.}
    \label{fig:hazard_distributions_2}
\end{figure}

\begin{table}[htbp]
\centering
\caption{Modified parameters in Experiments~2}
\label{tab:exp2_modified_final}
\begin{tabular}{lll}
\toprule
\textbf{Parameter} & \textbf{Description} & \textbf{New Value / Setting} \\
\midrule
Observation model & $o \sim$ Gaussian or log-normal mixture & Mixed Gaussian / log-normal \\
Exposure model & $\mathrm{xp} \sim$ Gaussian mixture & Mixed Gaussian \\
Sensing cost & $c_{\mathrm{sense}}(a)$ & $\{0.1, 0.1, 0.1, 0.1, 0.1\}$ \\
Immediate exposure penalty & $\lambda_{\mathrm{imm}}$ & $-10.0$ \\
Persistent decay rate & $\alpha$ & $0.99$ \\
Persistent exposure weight & $\lambda_{\mathrm{pers}}$ & $0.5$ \\
Cumulative exposure weight & $\lambda_{\mathrm{cum}}$ & $0.01$ \\
\bottomrule
\end{tabular}
\end{table}

\begin{table}[htbp]
\centering
\caption{Summary of Experiment~2 Results. Higher values are better for observation and total reward, while lower values are better for cumulative exposures.}
\label{tab:exp2_summary}
\begin{tabular}{lccc}
\toprule
\textbf{Planner} & \textbf{Observation Reward} $\uparrow$ & \textbf{Cumulative Exposures} $\downarrow$ & \textbf{Total Reward} $\uparrow$ \\
\midrule
Adaptive Robust & 7334.50 $\pm$ 936.78 & \textbf{77.29 $\pm$ 26.41} & \textbf{4416.98 $\pm$ 3101.26} \\
Nominal & \textbf{8414.99 $\pm$ 1069.28} & 109.11 $\pm$ 39.93 & $-153.79$ $\pm$ 5352.14 \\
Static Robust & 3210.33 $\pm$ 297.09 & 87.62 $\pm$ 11.74 & $-1998.17$ $\pm$ 1505.94 \\
\bottomrule
\end{tabular}
\end{table}

\Cref{tab:exp2_summary} summarizes the quantitative results obtained under modified threat models. Despite the changes in both the observation and exposure distributions, the overall performance trend remains consistent with that observed in Experiment~1. The adaptive robust planner continues to achieve substantially higher total rewards compared to the nominal and static robust baselines, while continuing to keep cumulative exposures low. This consistency indicates that the proposed adaptive robust approach remains effective beyond Gaussian threat assumptions, generalizing across broader classes of uncertainty and confirming the robustness of the planning method.

\begin{figure}[htbp]
    \centering
    \begin{subfigure}[b]{0.48\textwidth}
        \centering
        \includegraphics[width=\textwidth]{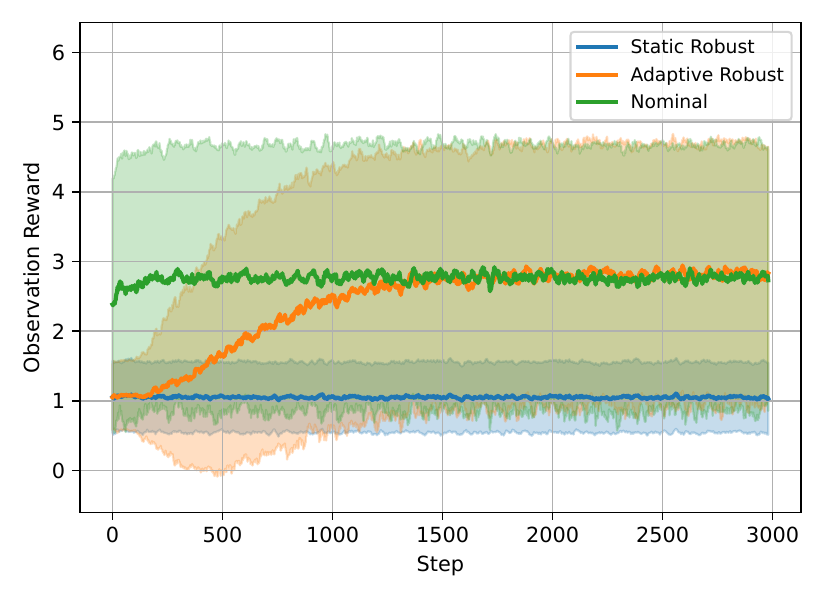}
        \caption{Observation rewards over steps.}
        \label{fig:reward_change_exp2_a}
    \end{subfigure}
    \begin{subfigure}[b]{0.48\textwidth}
        \centering
        \includegraphics[width=\textwidth]{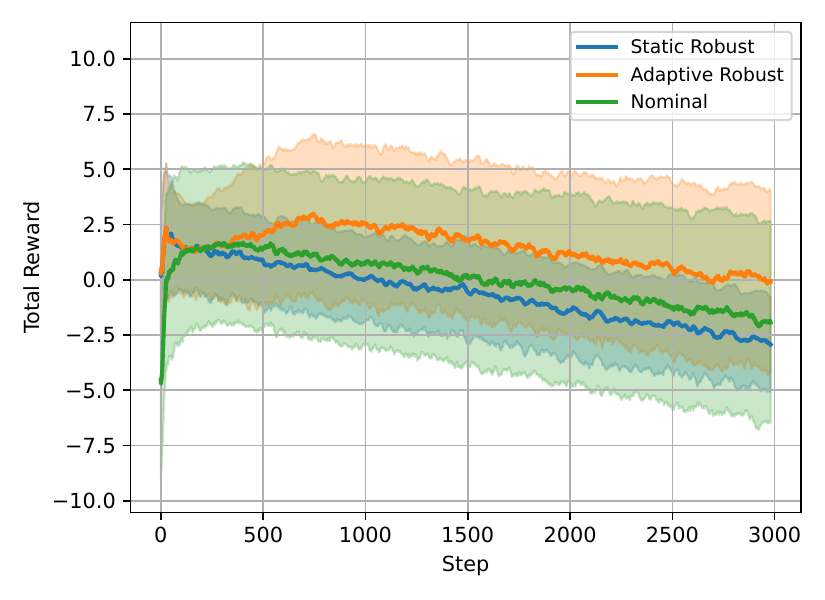}
        \caption{Total rewards over steps.}
        \label{fig:reward_change_exp2_b}
    \end{subfigure}
    \caption{Results of Experiment~2 comparing the reward evolution across three planning strategies. Both plots show 20-step moving-averaged results across random seeds. (a) Step-wise observation rewards. (b) Step-wise total rewards, including exposure penalties. Solid lines indicate the averaged performance across random seeds, and the shaded regions represent \(\pm\)1 standard deviation.}
    \label{fig:reward_change_exp2}
\end{figure}

\Cref{fig:reward_change_exp2} illustrates the temporal evolution of both observation and total rewards across the three planning strategies. Similar to the Gaussian case in Experiment~1, the adaptive robust planner consistently maintains higher observation rewards while stabilizing total returns through balanced exploration and exposure control. In contrast, the nominal planner achieves higher observation rewards but yields lower overall performance under uncertain threat conditions. \Cref{fig:reward_change_exp2_b} shows a gradual decrease in total rewards across all methods compared to Experiment~1. This trend results from the increased cumulative exposure weight $\lambda_{\mathrm{cum}}$, which amplifies long-term penalty accumulation. Regardless of the parameter changes, the adaptive robust planner still outperforms the others and gradually recovers its total rewards as the ambiguity set refines over time.

\begin{figure}[htbp]
    \centering
    \begin{subfigure}[b]{0.48\textwidth}
        \centering
        \includegraphics[width=\textwidth]{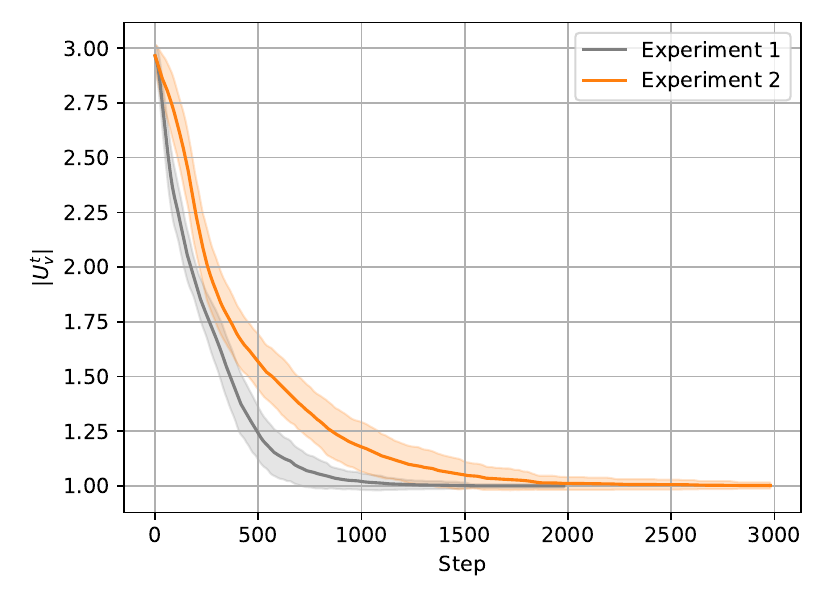}
        \caption{Average credible set size over steps.}
        \label{fig:results_others_exp2_a}
    \end{subfigure}
    \begin{subfigure}[b]{0.48\textwidth}
        \centering
        \includegraphics[width=\textwidth]{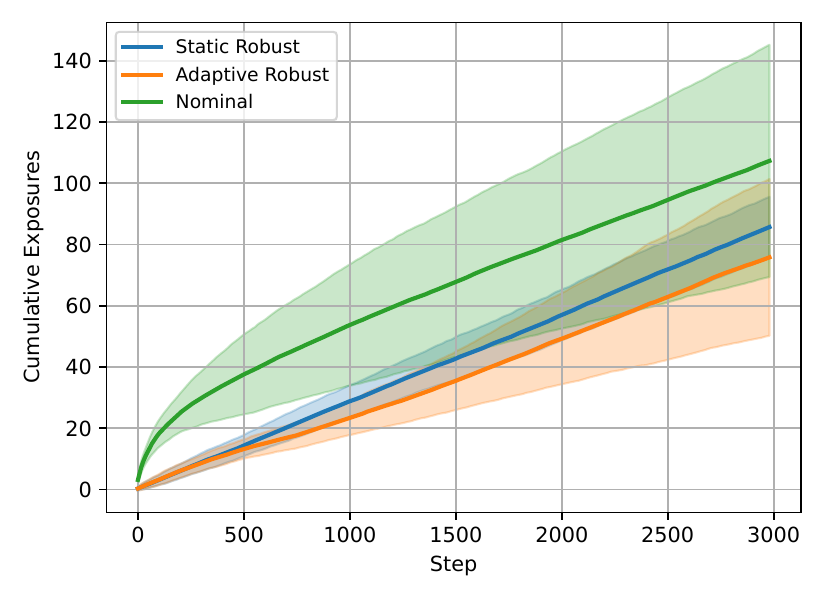}
        \caption{Cumulative exposures over steps.}
        \label{fig:results_others_exp2_b}
    \end{subfigure}
    \caption{Results of Experiment~2. (a) Step-wise changes in the credible set size $|U_v^t|$ for the adaptive~robust planner between Experiment~1 and Experiment~2. The results show how the adaptive-robust planner progressively reduces uncertainty under different experimental conditions. (b) Cumulative number of exposures over steps, compared across the three planning strategies. Both plots present 20-step moving-averaged results across random seeds. The solid lines denote the mean, and the shaded regions represent one standard deviation.}
    \label{fig:results_others_exp2}
\end{figure}

The underlying adaptive behavior is further analyzed in \Cref{fig:results_others_exp2}. \Cref{fig:results_others_exp2_a} shows that the credible set size $|U_v^t|$ steadily decreases as the adaptive planner gathers more informative observations, confirming effective uncertainty reduction under non-Gaussian distributions. Despite having the same graph size as in Experiment~1, the reduction progresses more slowly due to the broader coverage of the Gaussian mixture models, which makes distinguishing among threats more challenging. Correspondingly, \Cref{fig:results_others_exp2_b} demonstrates that this reduction in uncertainty translates to lower cumulative exposure compared to the nominal and static robust baselines. The overall variance in both rewards and exposures is higher than in Experiment~1, reflecting the wider spread of the observation and threat models used in Experiment~2.

\begin{figure}[htbp]
    \centering
    \includegraphics[width=0.7\linewidth]{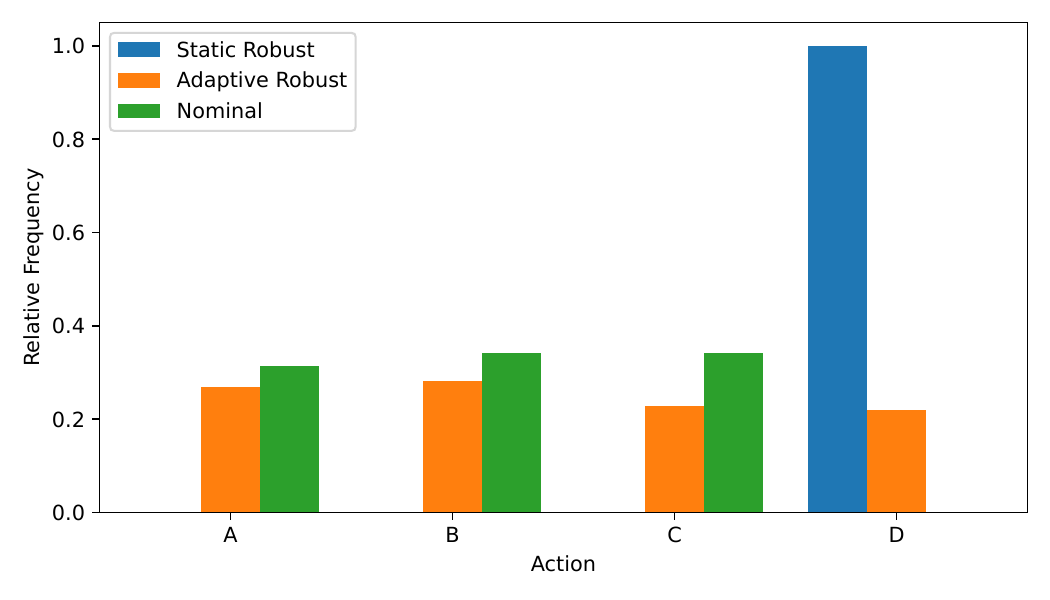}
    \caption{Distribution of sensing action selections across different policies in Experiment~2. Action labels (A–D) correspond to the mapped sense-action categories. Frequencies are normalized to show relative selection ratios.}
    \label{fig:exp2_action}
\end{figure}

Finally, \Cref{fig:exp2_action} compares the action selection distributions across the three planners, showing patterns similar to those in Experiment~1. The static robust policy consistently favors the safer Action~D under uncertainty, while the adaptive robust planner initially exhibits similar conservative behavior but gradually shifts toward a more balanced selection as uncertainty decreases. These results demonstrate that the adaptive robust planner effectively adjusts its sensing strategy while maintaining robustness across different threat conditions, even when the underlying threat models are non-Gaussian.

\subsection{Experiment 3: Scalability across Network Topologies}
To examine scalability, we test how the planner's performance and computation time change with increasing network size and structural complexity. We test a variety of graph topologies that reflect practical ISR mission environments, ranging from regular grid-like layouts to irregular, hub-dominated, and modular regions. Assessing performance across these heterogeneous topologies allows us to measure robustness under diverse operational environments. The six representative topologies considered are illustrated in \Cref{fig:graph_topologies}.

% Graph topology figures
\begin{figure}[htbp]
    \centering
    \begin{subfigure}[b]{0.33\textwidth}
        \centering
        \includegraphics[width=\textwidth, trim=30pt 30pt 35pt 30pt, clip]{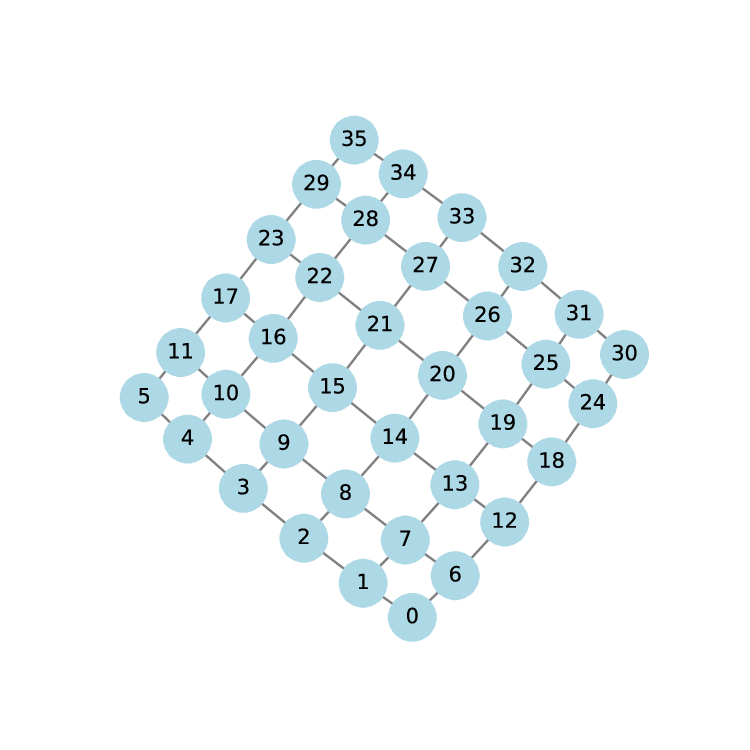}
        \caption{Grid}
    \end{subfigure}
    \begin{subfigure}[b]{0.33\textwidth}
        \centering
        \includegraphics[width=\textwidth, trim=30pt 30pt 35pt 30pt, clip]{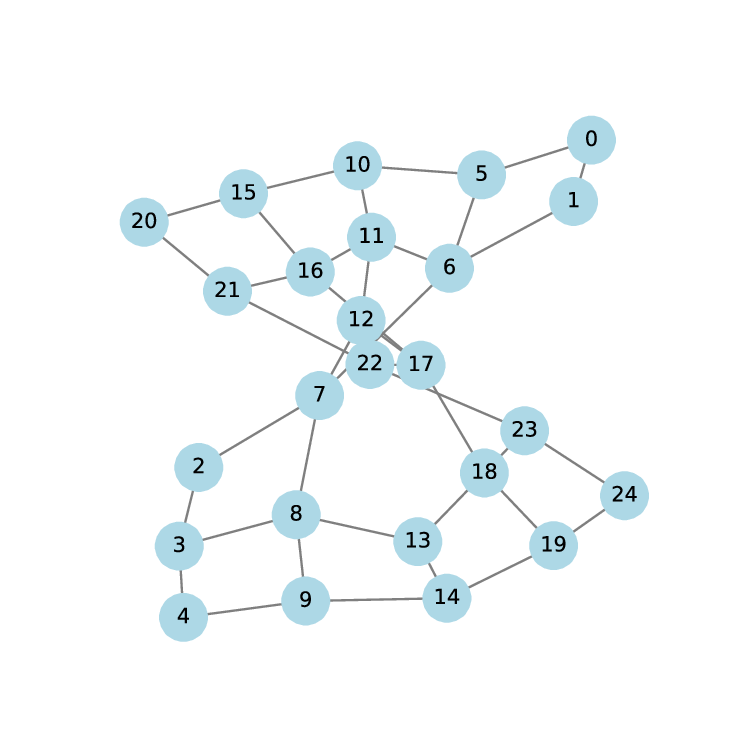}
        \caption{Grid with edge deletions}
    \end{subfigure}
    \begin{subfigure}[b]{0.33\textwidth}
        \centering
        \includegraphics[width=\textwidth, trim=30pt 30pt 35pt 30pt, clip]{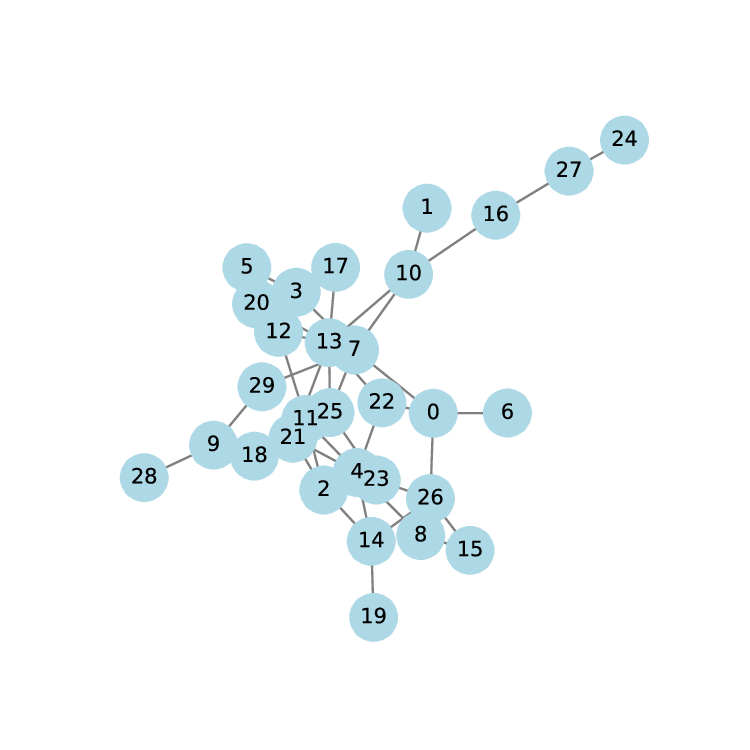}
        \caption{Erdos-Renyi}
    \end{subfigure}
    \vspace{2cm}
        \begin{subfigure}[b]{0.33\textwidth}
        \centering
        \includegraphics[width=\textwidth, trim=30pt 30pt 35pt 30pt, clip]{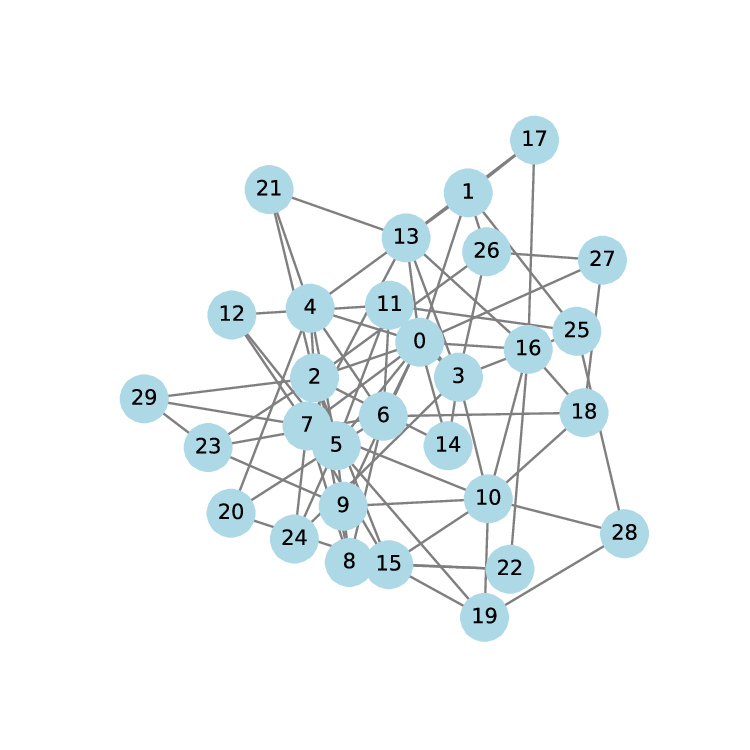}
        \caption{Barabasi–Albert}
    \end{subfigure}
    \begin{subfigure}[b]{0.33\textwidth}
        \centering
        \includegraphics[width=\textwidth, trim=30pt 30pt 35pt 30pt, clip]{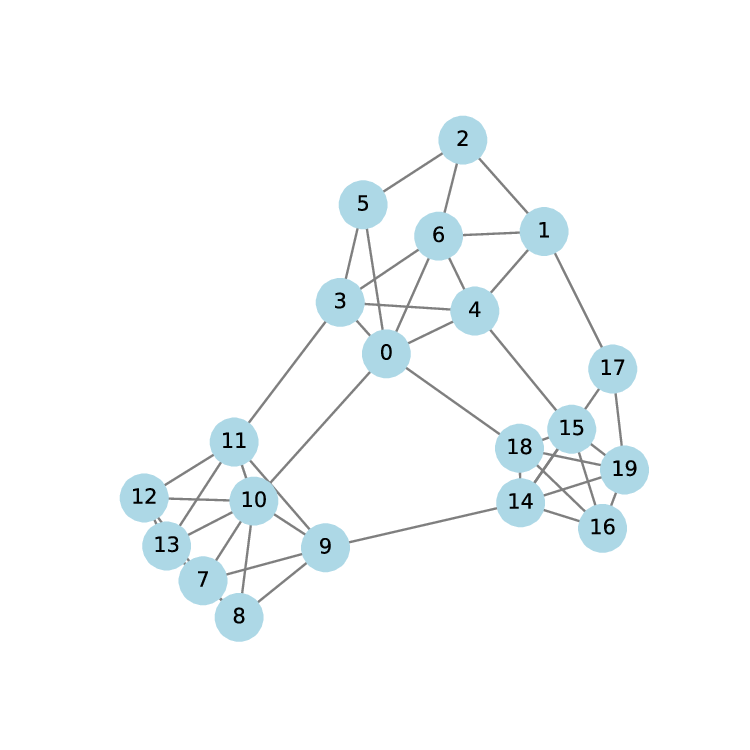}
        \caption{Stochastic block model}
    \end{subfigure}
    \begin{subfigure}[b]{0.33\textwidth}
        \centering
        \includegraphics[width=\textwidth, trim=30pt 30pt 35pt 30pt, clip]{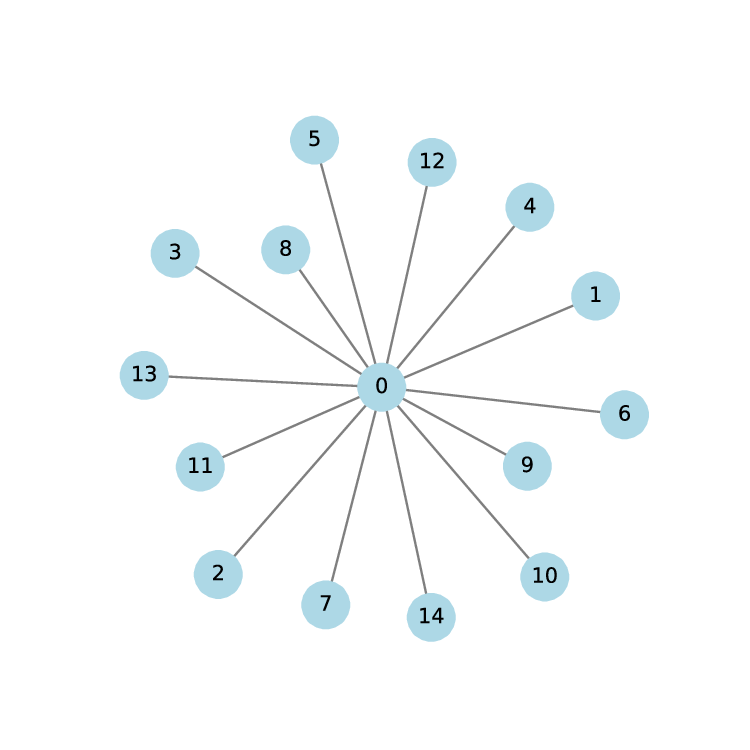}
        \caption{Star}
    \end{subfigure}
    \caption{Illustrations of the network topologies used in Experiment~3. Each structure differs in size and connectivity, representing diverse conditions for evaluating scalability.}
    \label{fig:graph_topologies}
\end{figure}

\textit{Regular grid networks} (\(3\times4\), \(5\times5\), and \(6\times6\)) have a simple and regular structure, which allows direct comparison with the controlled setting of Experiment~1. \textit{Grids with random edge deletions} (\(5\times5\) and \(6\times6\)) introduce local connectivity disruptions, reflecting, e.g., terrain occlusions or restricted flight corridors \cite{10.1007/978-3-642-22993-0_49,SEYMOUR1978227}. \textit{Star networks}, composed of a central hub and 8 or 14 leaf nodes, are included to examine how the planner behaves in highly centralized hub-and-spoke structures. \textit{Erdos–Renyi random graphs} with 15 and 30 nodes (\(p = 0.10, 0.15\)) offer uniform random connectivity, serving as a baseline for comparison in networks without specific structural patterns. \textit{Barabasi–Albert scale-free graphs} of comparable size (\(m = 2, 3\)) represent networks where a few hubs have many connections. Finally, \textit{stochastic block models (community)} with 20 and 35 nodes, containing 3 and 4 clusters (\(p_{\text{intra}} = 0.7\), \(p_{\text{inter}} = 0.05\) or \(0.03\)), are used to test how well the planner adapts to modular networks with clear community structures.

Each experiment is repeated across 10 random seeds for each graph configuration and planner type (adaptive robust, static robust, and nominal) to ensure statistical consistency. All planners were tested in the same simulation environment as in Experiment~1. The exposure and reward settings were identical, but the simulation horizon was extended to 8000 time steps. Random seeds are fixed for graph generation and threat setup to ensure reproducibility while allowing for randomness in observations and exposures.

\Cref{tab:exp3_summary} summarizes the simulation results of Experiment~3, where all values are reported as mean~$\pm$~standard deviation across 10 random seeds. As shown in the table, the adaptive robust planner consistently achieved higher total rewards than both the nominal and static robust planners across all network topologies. For the observation rewards, values were generally similar within the same planner, regardless of the underlying graph type or size. This consistency arises because all simulations were run for 8000 steps, providing comparable opportunities for each planner to accumulate observable rewards. The nominal planner showed substantially higher cumulative exposure, indicating more aggressive and dangerous behavior than the other planners. As a result, its higher observation rewards are offset in the total reward value. This pattern is consistent with the findings from \Cref{fig:reward_change_exp1}, where the adaptive robust planner demonstrated superior long-term performance by maintaining a balanced trade-off between safety and efficiency. The result shows that the total reward also tended to increase with graph size. This trend occurs because, in larger networks, the novelty term remains high for longer as more nodes become available to explore. Furthermore, the adaptive robust planner exhibited smaller variance in both total rewards and cumulative exposures, confirming its stable and reliable performance across networks with different structures and scales.

\begin{table}[htbp]
\centering
\caption{Summary of Experiment~3 Results. Comparisons are valid only within each graph type and not across different topologies.}
\label{tab:exp3_summary}
\begin{tabular}{llllll}
\toprule
\textbf{Graph Type} & \textbf{Nodes} & \textbf{Planner} & \textbf{Obs. Reward} $\uparrow$ & \textbf{Cum. Exposure} $\downarrow$ & \textbf{Total Reward} $\uparrow$ \\
\midrule

% ---------------------------------------------------------
% BARABASI
% ---------------------------------------------------------
\multirow{6}{*}{Barabasi-Albert} 
& \multirow{3}{*}{15} 
 & Adaptive Robust & 26604.09 ± 228.80 & \textbf{109.60 ± 7.17} & \textbf{641121.85 ± 1991.10} \\
 & & Nominal & \textbf{27633.34 ± 215.70} & 474.80 ± 103.36 & 9686.63 ± 14999.06 \\
 & & Static Robust & 6376.57 ± 676.51 & 365.00 ± 13.33 & 10969.77 ± 2855.34 \\
\cmidrule(lr){2-6}

& \multirow{3}{*}{30} 
 & Adaptive Robust & 25267.94 ± 237.34 & \textbf{121.60 ± 4.25} & \textbf{155323.85 ± 1247.38} \\
 & & Nominal & \textbf{27355.98 ± 151.74} & 744.70 ± 123.64 & 70484.12 ± 13970.15 \\
 & & Static Robust & 6234.28 ± 744.20 & 365.70 ± 14.01 & 105826.26 ± 2308.49 \\
\midrule

% ---------------------------------------------------------
% COMMUNITY
% ---------------------------------------------------------
\multirow{6}{*}{\begin{tabular}{@{}c@{}}
Stochastic\\
Block Model
\end{tabular}}
& \multirow{3}{*}{30}
 & Adaptive Robust & 25350.52 ± 361.47 & \textbf{121.10 ± 5.90} & \textbf{146729.40 ± 1296.92} \\
 & & Nominal & \textbf{27347.23 ± 187.65} & 744.50 ± 158.72 & 59647.03 ± 19560.29 \\
 & & Static Robust & 6182.58 ± 773.15 & 365.70 ± 14.01 & 96770.31 ± 4566.13 \\
\cmidrule(lr){2-6}

& \multirow{3}{*}{40}
 & Adaptive Robust & 24294.14 ± 320.70 & \textbf{136.60 ± 6.96} & \textbf{218087.10 ± 1372.49} \\
 & & Nominal & \textbf{27181.90 ± 150.27} & 930.70 ± 119.53 & 113903.51 ± 13890.50 \\
 & & Static Robust & 6187.70 ± 568.42 & 365.90 ± 12.64 & 172227.88 ± 2468.59 \\
\midrule

% ---------------------------------------------------------
% ERDOS
% ---------------------------------------------------------
\multirow{6}{*}{Erdos-Renyi}
& \multirow{3}{*}{15}
 & Adaptive Robust & 26944.90 ± 567.93 & \textbf{116.80 ± 35.00} & \textbf{44870.10 ± 21794.13} \\
 & & Nominal & \textbf{27699.09 ± 273.14} & 435.60 ± 169.30 & -3554.78 ± 15704.17 \\
 & & Static Robust & 6245.47 ± 691.76 & 365.00 ± 13.33 & -7520.53 ± 22919.71 \\
\cmidrule(lr){2-6}

& \multirow{3}{*}{30}
 & Adaptive Robust & 25589.12 ± 869.56 & \textbf{119.00 ± 9.63} & \textbf{148749.01 ± 51479.06} \\
 & & Nominal & \textbf{27385.43 ± 255.58} & 720.50 ± 220.94 & 67701.03 ± 36151.77 \\
 & & Static Robust & 6134.88 ± 819.49 & 365.70 ± 14.01 & 98663.02 ± 52665.88 \\
\midrule

% ---------------------------------------------------------
% GRID
% ---------------------------------------------------------
\multirow{9}{*}{Grid}
& \multirow{3}{*}{12}
 & Adaptive Robust & 26999.01 ± 196.12 & \textbf{102.90 ± 7.71} & \textbf{44345.51 ± 2482.13} \\
 & & Nominal & \textbf{27689.68 ± 163.95} & 441.90 ± 160.51 & -10734.79 ± 22350.71 \\
 & & Static Robust & 6127.44 ± 815.71 & 364.80 ± 13.06 & -13145.22 ± 1443.97 \\
\cmidrule(lr){2-6}

& \multirow{3}{*}{25}
 & Adaptive Robust & 25687.31 ± 322.45 & \textbf{128.60 ± 22.19} & \textbf{121098.12 ± 5810.31} \\
 & & Nominal & \textbf{27441.00 ± 159.42} & 702.50 ± 172.20 & 34449.63 ± 20104.99 \\
 & & Static Robust & 6184.44 ± 790.95 & 364.50 ± 14.00 & 64835.45 ± 2247.67 \\
\cmidrule(lr){2-6}

& \multirow{3}{*}{36}
 & Adaptive Robust & 24709.20 ± 333.65 & \textbf{151.50 ± 34.08} & \textbf{185971.11 ± 4331.75} \\
 & & Nominal & \textbf{27251.40 ± 180.25} & 861.80 ± 133.23 & 85872.47 ± 15090.80 \\
 & & Static Robust & 6093.93 ± 724.04 & 366.10 ± 12.60 & 134456.16 ± 2058.68 \\
\midrule

% ---------------------------------------------------------
% GRID_DEL
% ---------------------------------------------------------
\multirow{6}{*}{\begin{tabular}{@{}c@{}}
Grid with\\
Edge 
Deletions
\end{tabular}}
& \multirow{3}{*}{25}
 & Adaptive Robust & 25741.21 ± 145.88 & \textbf{117.70 ± 9.24} & \textbf{132262.88 ± 7148.92} \\
 & & Nominal & \textbf{27466.33 ± 225.79} & 694.70 ± 209.91 & 50786.09 ± 25616.49 \\
 & & Static Robust & 6044.33 ± 730.08 & 368.10 ± 13.55 & 80153.59 ± 8032.60 \\
\cmidrule(lr){2-6}

& \multirow{3}{*}{36}
 & Adaptive Robust & 25134.64 ± 1020.19 & \textbf{126.20 ± 13.41} & \textbf{194330.90 ± 68213.77} \\
 & & Nominal & \textbf{27359.72 ± 283.59} & 683.50 ± 217.12 & 118948.59 ± 52728.24 \\
 & & Static Robust & 5794.11 ± 1462.13 & 367.60 ± 14.75 & 144842.62 ± 72473.51 \\
\midrule

% ---------------------------------------------------------
% STAR
% ---------------------------------------------------------
\multirow{6}{*}{Star}
& \multirow{3}{*}{9}
 & Adaptive Robust & 27198.16 ± 207.06 & \textbf{103.20 ± 8.48} & \textbf{32855.09 ± 1494.85} \\
 & & Nominal & \textbf{27711.70 ± 239.62} & 416.30 ± 144.04 & -16254.38 ± 20902.36 \\
 & & Static Robust & 5593.48 ± 1759.26 & 364.60 ± 13.28 & -22579.18 ± 1664.92 \\
\cmidrule(lr){2-6}

& \multirow{3}{*}{15}
 & Adaptive Robust & 26628.59 ± 263.27 & \textbf{109.80 ± 9.41} & \textbf{67126.14 ± 1709.23} \\
 & & Nominal & \textbf{27689.59 ± 232.86} & 401.50 ± 135.57 & 22525.42 ± 17940.09 \\
 & & Static Robust & 6251.57 ± 1343.95 & 365.00 ± 13.33 & 14061.61 ± 1853.36 \\
\bottomrule

\end{tabular}
\end{table}

Across all tested graph topologies and node sizes, the adaptive robust planner demonstrated stable and consistent convergence, which occurs when all credible sets become singletons (e.g., $|U_v^t|=1$ for the reachable nodes). In a few runs, certain nodes failed to converge due to structural limitations in the network rather than a learning issue with the planner. These cases occurred when the graphs contained disconnected components, making some nodes unreachable from the main connected component. Because the graphs were randomly generated, such disconnected components appeared occasionally. After excluding structurally invalid cases, our adaptive robust planner successfully converged for all reachable nodes. The mean and standard deviation of the convergence steps increased gradually with graph size, indicating that the planner remains reliable across networks of different structures and scales.

\begin{figure}[htbp]
    \centering
    \includegraphics[width=0.7\linewidth]{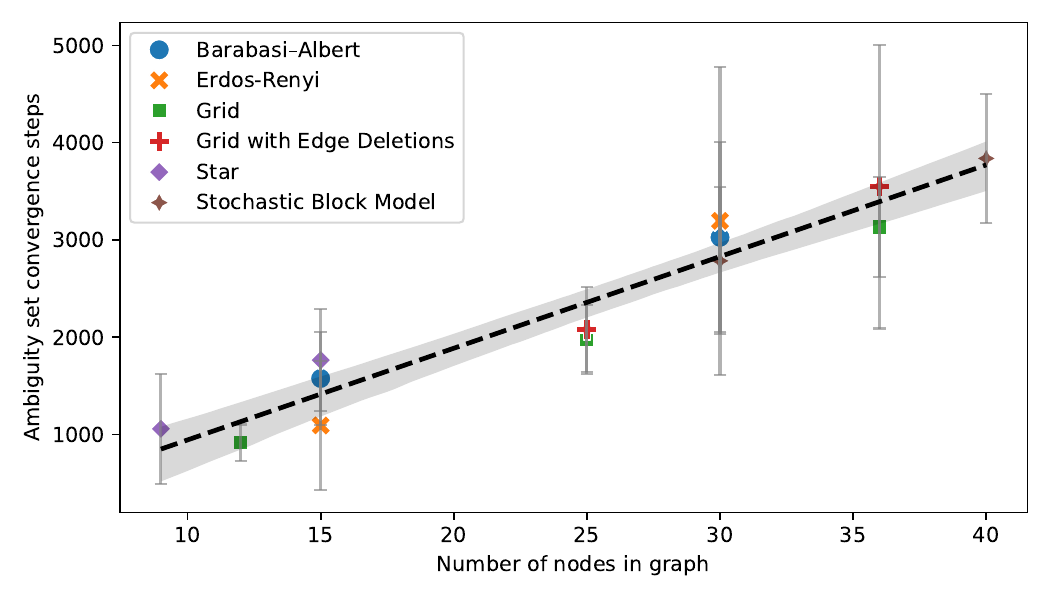}
    \caption{Each point represents the mean number of steps required for the ambiguity set $U_t$ to converge ({mean}~$\pm$~{std}) across networks of different sizes and types. Error bars indicate one standard deviation over random seeds, and the dashed black line denotes the overall linear trend across all network types.}
    \label{fig:exp2_convergence}
\end{figure}

Figure~\ref{fig:exp2_convergence} illustrates how graph size and topology influence the performance of the adaptive robust planner. Distinct marker shapes represent different graph topologies, while the horizontal axis denotes the number of nodes in each graph. Each point corresponds to the number of steps required for the size of all credible sets to reach one ($|U_v^t|=1$) among the reachable nodes. The results indicate that, regardless of the network topology, the number of nodes is a key parameter that determines the adaptive planner's efficiency. A clear linear relationship is observed between graph size and convergence steps, demonstrating that the proposed adaptive planner operates consistently across diverse graph structures. Each episode was limited to 8000 steps, yet even the slowest-converging case required only around 5200 steps to fully resolve all ambiguity sets.

\section{Conclusion}\label{sec:conclusion}
\subsection{Summary}
We present an adaptive RMDP framework for decision-making in an uncertain threat environment, where a single CCA performs repetitive surveillance missions within a partitioned ISR operation area. In addition, the framework is supported by theoretical guarantees on operator convergence, asymptotic optimality, and probabilistic safety, providing a rigorous foundation for its reliability under uncertainty. The proposed approach addresses the limitations of conventional RMDPs that rely on static ambiguity sets by incorporating a Bayesian inference-based mechanism to reduce ambiguity set sizes. This enables the agent to progressively update its threat model and refine its policy using observations collected during mission execution. Consequently, as the agent repeatedly monitors its assigned region, it naturally transitions from conservative to aggressive behaviors as the uncertainties are reduced 

The effectiveness of the proposed algorithm was demonstrated through three sets of experiments. The first experiment confirmed that the proposed adaptive robust planner exhibited the intended transition from conservative to efficient behavior as it accumulated environmental knowledge in a simple environment, while  the second demonstrated generalization capability under non-Gaussian threat distributions. The third experiment evaluated adaptability and scalability across various graph sizes and topologies. The results showed that our adaptive robust planner achieved higher cumulative rewards and lower exposure risk than static robust and nominal planners, highlighting its advantage in leveraging accumulated observations to continuously improve policy during repetitive surveillance missions. Overall, the proposed method provides a decision-making structure that autonomously balances robustness and adaptivity in contested environments, offering a promising direction for reliable long-term surveillance under uncertainty.

\subsection{Limitations and Future Work}
\label{sec:future_work}
We plan to extend the current work in two primary directions, each motivated by limitations of the present work. First, the current adaptive RMDP framework is restricted to a single agent operating within a fixed region and does not account for interactions among multiple CCAs. To address this limitation, we will expand to a distributed, multi-agent environments. While the present work focuses on a single agent repeatedly conducting surveillance within its assigned region, actual ISR operations involve multiple agents operating in adjacent areas and interacting with one another. Future work will therefore consider information exchange, mutual trust modeling, and distributed decision-making structures to enable cooperative behavior among agents. In particular, under dynamically changing threat conditions or information asymmetry, mechanisms for region reassignment and policy synchronization will be investigated to enhance overall surveillance efficiency and system resilience. We aim to scale robustness of the adaptive RMDP approach to a distributed system level, improving its applicability in realistic CCA operating scenarios.

Second, the present study assumes fully autonomous operation and does not incorporate human intervention, which is a core aspect of practical CCA employment. Therefore, we will explore the integration of human involvement and intervention in the decision-making process. This direction reflects the fundamental concept of CCAs as autonomous teammates that operate in coordination with human pilots. Future studies will model various forms of human involvement in the decision-making process, ranging from soft interventions (e.g., the human provides guidance or modifies the autonomous policy), to hard overrides (e.g., the human operator directly takes control and alters the agent’s actions). The effects of these interventions on policy adaptability, safety, and mission performance will be analyzed to assess the stability and learning characteristics of adaptive RMDP structures under human intervention. Furthermore, research will investigate decision-making architectures that can effectively integrate human high-level judgments from a human operator with autonomous computational reasoning, establishing the foundation for safe and reliable human–machine collaboration in future ISR missions.

\appendix
\section{Appendix: Proofs of \Cref{sec:theory}}\label{sec:proof}

\begin{proof}[Proof of Theorem \ref{thm:as_convergence}]
By Assumption \ref{ass:disc}, the one-step reward is bounded, so each $\mathcal T_{\mathrm{rob}}^t(\omega)$ maps $\mathcal B$ onto itself. By Assumption \ref{ass:rect}, the operators are $\gamma$-contractions with a common $\gamma\in(0,1)$. Let $V, W \in \mathcal B$, that is, bounded real-valued functions on $\mathcal S'$. Since $\|\mathcal T_{\mathrm{rob}}^t V-\mathcal T_{\mathrm{rob}}^t W\|_\infty\le \gamma\|V-W\|_\infty$ for all $t$ and $\mathcal T_{\mathrm{rob}}^t\to \mathcal T_\infty$ uniformly on bounded sets, taking $t\to\infty$ gives $\|\mathcal T_\infty V-\mathcal T_\infty W\|_\infty\le \gamma\|V-W\|_\infty$.

Let $\Omega_0$ be the event of probability one on which the above uniform convergence condition holds. Fix $\omega\in\Omega_0$.
For this fixed $\omega$, all quantities are deterministic. Let $V^\star(\omega)$ be the unique fixed point of $\mathcal T_\infty(\omega)$ on $\mathcal B$ by Banach’s fixed-point theorem. Uniform boundedness of rewards implies $\|V^\star(\omega)\|_\infty\le R_{\max}/(1-\gamma)$ and, inductively, $\sup_t\|V_t(\omega)\|_\infty\le R_{\max}/(1-\gamma)$. Let $M:=R_{\max}/(1-\gamma)$ and $B_M:=\{V\in\mathcal B:\|V\|_\infty\le M\}$. Then, for all $t$,
\begin{equation}\label{eq:triangle_bound}
\begin{aligned}
\|V_{t+1}(\omega)-V^\star(\omega)\|_\infty
&= \|\mathcal T_{\mathrm{rob}}^t(\omega)V_t(\omega) - \mathcal T_\infty(\omega)V^\star(\omega)\|_\infty \\[2pt]
&= \|\mathcal T_{\mathrm{rob}}^t(\omega)V_t(\omega) - \mathcal T_\infty(\omega)V_t(\omega)
   + \mathcal T_\infty(\omega)V_t(\omega) - \mathcal T_\infty(\omega)V^\star(\omega)\|_\infty \\[2pt]
&\le \|\mathcal T_{\mathrm{rob}}^t(\omega)V_t(\omega) - \mathcal T_\infty(\omega)V_t(\omega)\|_\infty
   + \|\mathcal T_\infty(\omega)V_t(\omega) - \mathcal T_\infty(\omega)V^\star(\omega)\|_\infty,
\end{aligned}
\end{equation}
where the last inequality is by the triangle inequality. Using the contraction property of $\mathcal T_\infty(\omega)$, we have
\begin{equation}\label{eq:contraction_bound}
\|\mathcal T_\infty(\omega)V_t(\omega)-\mathcal T_\infty(\omega)V^\star(\omega)\|_\infty
\le \gamma\,\|V_t(\omega)-V^\star(\omega)\|_\infty.
\end{equation}
For the first term, define the uniform deviation as
\begin{equation}
\varepsilon_t(\omega)
:= \sup_{V\in B_M}\big\|\mathcal T_{\mathrm{rob}}^t(\omega)V-\mathcal T_\infty(\omega)V\big\|_\infty.
\end{equation}
Since $V_t(\omega)\in B_M$, we have
\begin{equation}
\|\mathcal T_{\mathrm{rob}}^t(\omega)V_t(\omega)-\mathcal T_\infty(\omega)V_t(\omega)\|_\infty
\le \varepsilon_t(\omega).
\end{equation}
Combining \eqref{eq:triangle_bound} and \eqref{eq:contraction_bound} yields
\begin{equation}
\|V_{t+1}(\omega)-V^\star(\omega)\|_\infty
\le \gamma\,\|V_t(\omega)-V^\star(\omega)\|_\infty + \varepsilon_t(\omega).
\end{equation}
Denoting $x_t(\omega):=\|V_t(\omega)-V^\star(\omega)\|_\infty$, we obtain
\begin{equation}
x_{t+1}(\omega) \le \gamma\,x_t(\omega)+\varepsilon_t(\omega).
\end{equation}
By uniform convergence on $B_M$, $\varepsilon_t(\omega)\to0$.  
The recursion $x_{t+1}\le \gamma x_t+\varepsilon_t$ then yields 
\begin{equation}
x_t(\omega)\ \le\ \gamma^{t}x_0(\omega)\ +\ \sum_{k=0}^{t-1}\gamma^{t-1-k}\varepsilon_k(\omega)
\ \longrightarrow\ 0,
\end{equation}
since $\gamma^{t}\to0$ and the weighted sum vanishes as $\varepsilon_t(\omega)\to0$. Hence $V_t(\omega)\to V^\star(\omega)$ for all $\omega\in\Omega_0$.
\end{proof}

\begin{proof}[Proof of Corollary \ref{cor:asym_opt}]
By Assumption \ref{ass:cont}, each $f_{v,a}$ is continuous on compact set $\Theta$, hence uniformly continuous. Since $U_v^t \to \{\theta_v^\star\}$ in the PK sense by Assumption~\ref{ass:tight},
and $f_{v,a}$ is continuous, the minimum over $U_v^t$ varies continuously as the set
shrinks to $\{\theta_v^\star\}$. Thus, for every $(v,a)$,
\begin{equation}
\min_{\theta\in U_v^t} f_{v,a}(\theta)\ \longrightarrow\ f_{v,a}(\theta_v^\star)\qquad\text{almost surely.}
\end{equation}
For the transition term, fix $s,a$ with $v=v(s,a)$. The maps
\begin{equation}
\theta \mapsto \mathbb E_{P(\cdot\mid s,a;\theta)}[V(S')] \quad\text{and}\quad
P \mapsto \mathbb E_P[V(S')]
\end{equation}
are continuous for bounded $V$ and finite $\mathcal S'$. Since $U_{s,a}^t=\phi_{s,a}(U_v^t)$ and $\phi_{s,a}$ is continuous, the induced ambiguity sets $U_{s,a}^t$ converge in the PK sense to $U_{s,a}^\infty := \{\phi_{s,a}(\theta_v^\star)\}$. Therefore,
\begin{equation}
\min_{P\in U_{s,a}^t}\mathbb E_{P}[V(S')]\ \longrightarrow\ \mathbb E_{P^\star(\cdot\mid s,a)}[V(S')]\qquad\text{a.s.}
\end{equation}
Combining the stage and transition limits, we obtain for any bounded $B\subset\mathcal B$, we have that
\begin{equation}
\sup_{V\in B}\,\| \mathcal T_{\mathrm{rob}}^{t}V-\mathcal T_{\mathrm{nom}}^{\theta^\star}V\|_\infty\ \xrightarrow[t\to\infty]{}\ 0\qquad\text{a.s.}
\end{equation}
Assumptions \ref{ass:disc} and \ref{ass:rect} imply each $\mathcal T_{\mathrm{rob}}^t$ is a $\gamma$-contraction with common modulus $\gamma\in(0,1)$. Theorem~\ref{thm:as_convergence} applies and yields $V_t\to V^\star_{\mathrm{nom}}$ almost surely. Since $V_t \to V_{\mathrm{nom}}^\star$ and the transition models converge, the corresponding one-step lookahead values $Q_t$ also converge pointwise to $Q_{\mathrm{nom}}^\star$. If the maximizer is unique at every state for $Q^\star_{\mathrm{nom}}$, the greedy selector is single valued for all large $t$, hence $\pi_t\to \pi^\star_{\mathrm{nom}}$.
\end{proof}

\begin{proof}[Proof of Proposition \ref{prop:safety}]
On the event $\mathcal E_t$ we have $\theta_{v}^\star\in U_v^t$ for all $v$.
By definition of the ambiguity set $U_{s,a}^t$ and because each transition model depends on its local parameter, for every $(s,a)$ with $v=v(s,a)$,
\begin{equation}
P^\star(\cdot\mid s,a)\ =\ \phi_{s,a}(\theta_{v}^\star)\ \in\ \phi_{s,a}(U_v^t)\ =\ U_{s,a}^t.
\end{equation}
Hence, on $\mathcal E_t$, we have $P^\star \in U^t := \prod_{(s,a)} U_{s,a}^t$. Therefore, still on $\mathcal E_t$ and for every $s$,
\begin{equation}
V^{\pi_t}_{P^\star}(s)\ \ge\ \min_{P\in U^t} V^{\pi_t}_P(s).
\end{equation}
Taking probabilities and using that the inequality holds for all $s$ simultaneously on $\mathcal E_t$ gives
\begin{equation}
\mathbb P\Big(\ \forall s\in\mathcal S':\ V^{\pi_t}_{P^\star}(s)\ \ge\ \min_{P\in U^t} V^{\pi_t}_P(s)\ \Big)\ \ge\ \mathbb P(\mathcal E_t)\ =\ 1-\delta_t.
\end{equation}
\end{proof}

\section*{Acknowledgements}
This project was supported by NSF IUCRC Phase I: Center for Autonomous Air Mobility and Sensing (CAAMS) Award No. 2137195. The authors also thank Mengmeng Li from École Polytechnique Fédérale de Lausanne (EPFL) for valuable discussions and feedback that helped improve the theoretical components of this work.

\bibliography{sample}

\begin{thebibliography}{50}
\newcommand{\enquote}[1]{``#1''}
\providecommand{\natexlab}[1]{#1}
\providecommand{\url}[1]{\texttt{#1}}
\providecommand{\urlprefix}{URL }
\expandafter\ifx\csname urlstyle\endcsname\relax
  \providecommand{\doi}[1]{\discretionary{}{}{}https://doi.org/#1}\else
  \providecommand{\doi}[1]{\discretionary{}{}{}\urlstyle{rm}\url{https://doi.org/#1}}\fi

\bibitem[{Choi and Li(2026)}]{previous_paper}
Choi, J., and Li, M.~Z., \enquote{Adaptive Robust Markov Decision Process for Wide-Area Surveillance with Collaborative Combat Aircraft,} , 2026.
\newblock To be presented at the 2026 AIAA SciTech Forum, Orlando, FL, USA.

\bibitem[{Gunzinger et~al.(2024)Gunzinger, Stutzriem, and Sweetman}]{Gunzinger2024CCA}
Gunzinger, C. M.~A., Stutzriem, M. G. L.~A., and Sweetman, B., \enquote{The Need for Collaborative Combat Aircraft for Disruptive Air Warfare,} Tech. rep., Mitchell Institute for Aerospace Studies, Arlington, VA, February 2024.

\bibitem[{Penney(2022)}]{Penney2022CCA}
Penney, H.~R., \enquote{Five Imperatives for Developing Collaborative Combat Aircraft for Teaming Operations,} Tech. rep., The Mitchell Institute for Aerospace Studies, Air \& Space Forces Association, Arlington, VA, 10 2022.

\bibitem[{DiMascio(2024)}]{CRS2024CCA}
DiMascio, J., \enquote{U.S. Air Force Collaborative Combat Aircraft (CCA),} Tech. Rep. IF12740, Congressional Research Service, 2024.
\newblock \urlprefix\url{https://crsreports.congress.gov/product/pdf/IF/IF12740}, accessed on May 21, 2025.

\bibitem[{Adamy(2006)}]{Adamy2006EW}
Adamy, D.~L., \emph{Introduction to Electronic Warfare Modeling and Simulation}, Scitech Publishing, Raleigh, NC, 2006.

\bibitem[{Ma et~al.(2025)Ma, Guleria, Alam, and Li}]{MA2025103342}
Ma, C., Guleria, Y., Alam, S., and Li, M.~Z., \enquote{Deep Reinforcement Learning-Based Air Traffic Flow Coordination in Flow-Centric Airspace,} \emph{Advanced Engineering Informatics}, Vol.~65, 2025, p. 103342.
\newblock \doi{https://doi.org/10.1016/j.aei.2025.103342}.

\bibitem[{Choi and Ahn(2025)}]{doi:10.2514/6.2025-1933}
Choi, J., and Ahn, J., \enquote{Uncertainty-Aware Autonomous Mars Landing Guidance With Curriculum Reinforcement Learning,} \emph{AIAA SCITECH 2025 Forum}, 2025.
\newblock \doi{10.2514/6.2025-1933}.

\bibitem[{Koch et~al.(2019)Koch, Mancuso, West, and Bestavros}]{10.1145/3301273}
Koch, W., Mancuso, R., West, R., and Bestavros, A., \enquote{Reinforcement Learning for UAV Attitude Control,} \emph{ACM Trans. Cyber-Phys. Syst.}, Vol.~3, No.~2, 2019.
\newblock \doi{10.1145/3301273}.

\bibitem[{Kirschner et~al.(2020)Kirschner, Bogunovic, Jegelka, and Krause}]{kirschner2020distributionally}
Kirschner, J., Bogunovic, I., Jegelka, S., and Krause, A., \enquote{Distributionally robust Bayesian optimization,} \emph{International Conference on Artificial Intelligence and Statistics}, 2020, pp. 2174--2184.
\newblock \doi{10.48550/ARXIV.2002.09038}.

\bibitem[{Iyengar(2005)}]{iyengar2005robust}
Iyengar, G., \enquote{Robust dynamic programming,} \emph{Mathematics of Operations Research}, Vol.~30, No.~2, 2005, pp. 257--280.
\newblock \doi{10.1287/moor.1040.0129}.

\bibitem[{Nilim and El~Ghaoui(2005)}]{ref:Nilim-05}
Nilim, A., and El~Ghaoui, L., \enquote{Robust Control of {M}arkov Decision Processes with Uncertain Transition Matrices,} \emph{Operations Research}, Vol.~53, No.~5, 2005, pp. 780--798.
\newblock \doi{10.1287/opre.1050.0216}.

\bibitem[{Wiesemann et~al.(2013)Wiesemann, Kuhn, and Rustem}]{wiesemann2013robust}
Wiesemann, W., Kuhn, D., and Rustem, B., \enquote{Robust {M}arkov decision processes,} \emph{Mathematics of Operations Research}, Vol.~38, No.~1, 2013, pp. 153--183.
\newblock \doi{10.1287/moor.1120.0566}.

\bibitem[{Choi et~al.(2025)Choi, Stagg, Peterson, and Li}]{bilevel2025}
Choi, J., Stagg, G., Peterson, C.~K., and Li, M.~Z., \enquote{Bi-Level Route Optimization and Path Planning with Hazard Exploration,} \emph{arXiv preprint arXiv:2503.24044}, 2025.
\newblock \urlprefix\url{https://arxiv.org/abs/2503.24044}.

\bibitem[{Choi and Li(2025{\natexlab{a}})}]{uavugv2025bandit}
Choi, J., and Li, M.~Z., \enquote{Bandit-Enabled Dynamic Vehicle Dispatch and Routing in Hazardous Environments,} \emph{AIAA AVIATION FORUM AND ASCEND 2025}, 2025{\natexlab{a}}.
\newblock \doi{10.2514/6.2025-3411}.

\bibitem[{Choi and Li(2025{\natexlab{b}})}]{airground2025}
Choi, J., and Li, M.~Z., \enquote{Autonomous Air-Ground Vehicle Operations Optimization in Hazardous Environments: A Multi-Armed Bandit Approach,} \emph{arXiv preprint arXiv:2508.08217}, 2025{\natexlab{b}}.
\newblock \urlprefix\url{https://arxiv.org/abs/2508.08217}.

\bibitem[{Sun et~al.(2025)Sun, Qi, and Shen}]{sun2025online}
Sun, S., Qi, M., and Shen, Z.-J.~M., \enquote{Online MDP with Prototypes Information: A Robust Adaptive Approach,} \emph{Proceedings of the AAAI Conference on Artificial Intelligence}, Vol.~39, 2025, pp. 20717--20724.
\newblock \doi{10.1609/aaai.v39i19.34283}.

\bibitem[{Chizek(2003)}]{Chizek2003}
Chizek, J.~G., \enquote{Military Transformation: Intelligence, Surveillance and Reconnaissance,} Tech. Rep. ADA469293, Congressional Research Service, Library of Congress, Washington, D.C., January 2003.
\newblock \urlprefix\url{https://apps.dtic.mil/sti/pdfs/ADA469293.pdf}.

\bibitem[{Smagh(2020)}]{Smagh2020}
Smagh, N.~S., \enquote{Intelligence, Surveillance, and Reconnaissance Design for Great Power Competition,} Tech. Rep. AD1114211, Congressional Research Service, Washington, D.C., June 2020.
\newblock \urlprefix\url{https://apps.dtic.mil/sti/pdfs/AD1114211.pdf}.

\bibitem[{{Department of the Air Force}(2025)}]{AFDN25-1}
{Department of the Air Force}, \enquote{Air Force Doctrine Note 25-1: Artificial Intelligence,} Tech. rep., Apr. 2025.
\newblock \urlprefix\url{https://www.doctrine.af.mil/Operational-Level-Doctrine/AFDN-25-1-Artificial-Intelligence/}.

\bibitem[{Cheng et~al.(2022)Cheng, Conway, Heggedahl, Morgan, and Schlessman}]{10.1117/12.2619117}
Cheng, H., Conway, E., Heggedahl, T., Morgan, J., and Schlessman, B., \enquote{{Explore AI and machine learning for future ISR collection planning and management},} \emph{Artificial Intelligence and Machine Learning for Multi-Domain Operations Applications IV}, Vol. 12113, edited by T.~Pham and L.~Solomon, International Society for Optics and Photonics, SPIE, 2022, p. 1211313.
\newblock \doi{10.1117/12.2619117}.

\bibitem[{Jing et~al.(2019)Jing, Wang, Wang, Li, and Zhou}]{persistent2019}
Jing, T., Wang, W., Wang, T., Li, X., and Zhou, X., \enquote{Dynamic Control Scheme of Multiswarm Persistent Surveillance in a Changing Environment,} \emph{Computational Intelligence and Neuroscience}, Vol. 2019, No.~1, 2019, p. 6025657.
\newblock \doi{https://doi.org/10.1155/2019/6025657}.

\bibitem[{Nigam and Kroo(2008)}]{4526242}
Nigam, N., and Kroo, I., \enquote{Persistent Surveillance Using Multiple Unmanned Air Vehicles,} \emph{2008 IEEE Aerospace Conference}, 2008, pp. 1--14.
\newblock \doi{10.1109/AERO.2008.4526242}.

\bibitem[{Lappas et~al.(2022)Lappas, Shin, Tsourdos, Lindgren, Bertrand, Marzat, Piet-Lahanier, Daramouskas, and Kostopoulos}]{drones6040094}
Lappas, V., Shin, H.-S., Tsourdos, A., Lindgren, D., Bertrand, S., Marzat, J., Piet-Lahanier, H., Daramouskas, Y., and Kostopoulos, V., \enquote{Autonomous Unmanned Heterogeneous Vehicles for Persistent Monitoring,} \emph{Drones}, Vol.~6, No.~4, 2022.
\newblock \doi{10.3390/drones6040094}.

\bibitem[{Wu et~al.(2021)Wu, Wu, and Hu}]{9222146}
Wu, Y., Wu, S., and Hu, X., \enquote{Cooperative Path Planning of UAVs \& UGVs for a Persistent Surveillance Task in Urban Environments,} \emph{IEEE Internet of Things Journal}, Vol.~8, No.~6, 2021, pp. 4906--4919.
\newblock \doi{10.1109/JIOT.2020.3030240}.

\bibitem[{Lin et~al.(2022)Lin, Yazıcıoğlu, and Aksaray}]{9697374}
Lin, X., Yazıcıoğlu, Y., and Aksaray, D., \enquote{Robust Planning for Persistent Surveillance With Energy-Constrained UAVs and Mobile Charging Stations,} \emph{IEEE Robotics and Automation Letters}, Vol.~7, No.~2, 2022, pp. 4157--4164.
\newblock \doi{10.1109/LRA.2022.3146938}.

\bibitem[{Egorov et~al.(2016)Egorov, Kochenderfer, and Uudmae}]{Egorov_Kochenderfer_Uudmae_2016}
Egorov, M., Kochenderfer, M., and Uudmae, J., \enquote{Target Surveillance in Adversarial Environments Using POMDPs,} \emph{Proceedings of the AAAI Conference on Artificial Intelligence}, Vol.~30, No.~1, 2016.

\bibitem[{Temizer et~al.(2010)Temizer, Kochenderfer, Kaelbling, Lozano-Perez, and Kuchar}]{doi:10.2514/6.2010-8040}
Temizer, S., Kochenderfer, M., Kaelbling, L., Lozano-Perez, T., and Kuchar, J., \enquote{Collision Avoidance for Unmanned Aircraft using Markov Decision Processes*,} \emph{AIAA Guidance, Navigation, and Control Conference}, 2010.

\bibitem[{Durrant-Whyte et~al.(2012)Durrant-Whyte, Roy, and Abbeel}]{6301081}
Durrant-Whyte, H., Roy, N., and Abbeel, P., \emph{Unmanned Aircraft Collision Avoidance Using Continuous-State POMDPs}, 2012, pp. 1--8.

\bibitem[{Yu et~al.(2017)Yu, Zhou, and Zhang}]{7991332}
Yu, X., Zhou, X., and Zhang, Y., \enquote{Collision-free trajectory generation for UAVs using Markov decision process,} \emph{2017 International Conference on Unmanned Aircraft Systems (ICUAS)}, 2017, pp. 56--61.
\newblock \doi{10.1109/ICUAS.2017.7991332}.

\bibitem[{Ong and Kochenderfer(2017)}]{doi:10.2514/1.G001822}
Ong, H.~Y., and Kochenderfer, M.~J., \enquote{Markov Decision Process-Based Distributed Conflict Resolution for Drone Air Traffic Management,} \emph{Journal of Guidance, Control, and Dynamics}, Vol.~40, No.~1, 2017, pp. 69--80.
\newblock \doi{10.2514/1.G001822}.

\bibitem[{Tang and Xu(2021)}]{9594329}
Tang, Y., and Xu, Y., \enquote{Multi-Agent Deep Reinforcement Learning for Solving Large-scale Air Traffic Flow Management Problem: A Time-Step Sequential Decision Approach,} \emph{2021 IEEE/AIAA 40th Digital Avionics Systems Conference (DASC)}, 2021, pp. 1--10.
\newblock \doi{10.1109/DASC52595.2021.9594329}.

\bibitem[{Spatharis et~al.(2019)Spatharis, Blekas, Bastas, Kravaris, and Vouros}]{8900719}
Spatharis, C., Blekas, K., Bastas, A., Kravaris, T., and Vouros, G.~A., \enquote{Collaborative multiagent reinforcement learning schemes for air traffic management,} \emph{2019 10th International Conference on Information, Intelligence, Systems and Applications (IISA)}, 2019, pp. 1--8.
\newblock \doi{10.1109/IISA.2019.8900719}.

\bibitem[{Balachandran and Atkins(2017)}]{doi:10.2514/1.G001743}
Balachandran, S., and Atkins, E., \enquote{Markov Decision Process Framework for Flight Safety Assessment and Management,} \emph{Journal of Guidance, Control, and Dynamics}, Vol.~40, No.~4, 2017, pp. 817--830.
\newblock \doi{10.2514/1.G001743}.

\bibitem[{Nilim et~al.(2001)Nilim, Ghaoui, Hansen, and Duong}]{tb_atm}
Nilim, A., Ghaoui, L., Hansen, M., and Duong, V., \enquote{Trajectory-based air traffic management (TB-ATM) under weather uncertainty,} \emph{4th USA/EUROPE ATM R\&D Seminar}, 2001.

\bibitem[{Suilen et~al.(2025)Suilen, Badings, Bovy, Parker, and Jansen}]{Suilen2025}
Suilen, M., Badings, T., Bovy, E.~M., Parker, D., and Jansen, N., \emph{Robust Markov Decision Processes: A Place Where AI and Formal Methods Meet}, Springer Nature Switzerland, Cham, 2025, pp. 126--154.
\newblock \doi{10.1007/978-3-031-75778-5_7}.

\bibitem[{Wang and Zou(2022)}]{pmlr-v162-wang22at}
Wang, Y., and Zou, S., \enquote{Policy Gradient Method For Robust Reinforcement Learning,} \emph{Proceedings of the 39th International Conference on Machine Learning}, Proceedings of Machine Learning Research, Vol. 162, edited by K.~Chaudhuri, S.~Jegelka, L.~Song, C.~Szepesvari, G.~Niu, and S.~Sabato, PMLR, 2022, pp. 23484--23526.
\newblock \urlprefix\url{https://proceedings.mlr.press/v162/wang22at.html}.

\bibitem[{Le~Tallec(2007)}]{le2007robust}
Le~Tallec, Y., \enquote{Robust, Risk-Sensitive, and Data-Driven Control of {M}arkov Decision Processes,} Ph.D. thesis, Massachusetts Institute of Technology, 2007.
\newblock \urlprefix\url{http://hdl.handle.net/1721.1/38598}.

\bibitem[{Li et~al.(2025)Li, Kuhn, and Sutter}]{li2025towards}
Li, M., Kuhn, D., and Sutter, T., \enquote{Towards Optimal Offline Reinforcement Learning,} \emph{arXiv preprint arXiv:2503.12283}, 2025.
\newblock \urlprefix\url{https://arxiv.org/abs/2503.12283}.

\bibitem[{Blanchet et~al.(2023)Blanchet, Lu, Zhang, and Zhong}]{blanchet2023double}
Blanchet, J., Lu, M., Zhang, T., and Zhong, H., \enquote{Double Pessimism is Provably Efficient for Distributionally Robust Offline Reinforcement Learning: Generic Algorithm and Robust Partial Coverage,} \emph{Advances in Neural Information Processing Systems}, Vol.~36, edited by A.~Oh, T.~Naumann, A.~Globerson, K.~Saenko, M.~Hardt, and S.~Levine, Curran Associates, Inc., 2023, pp. 66845--66859.

\bibitem[{Zhao et~al.(2020)Zhao, Queralta, and Westerlund}]{9308468}
Zhao, W., Queralta, J.~P., and Westerlund, T., \enquote{Sim-to-Real Transfer in Deep Reinforcement Learning for Robotics: a Survey,} \emph{2020 IEEE Symposium Series on Computational Intelligence (SSCI)}, 2020, pp. 737--744.
\newblock \doi{10.1109/SSCI47803.2020.9308468}.

\bibitem[{Dong et~al.(2024)Dong, Li, Wang, and Zhang}]{10.5555/3702676.3702729}
Dong, J., Li, J., Wang, B., and Zhang, J., \enquote{Online Policy Optimization for Robust Markov Decision Process,} \emph{Proceedings of the Fortieth Conference on Uncertainty in Artificial Intelligence}, JMLR.org, 2024.

\bibitem[{Akbarzadeh et~al.(2013)Akbarzadeh, Gagne, Parizeau, Argany, and Mostafavi}]{6334453}
Akbarzadeh, V., Gagne, C., Parizeau, M., Argany, M., and Mostafavi, M.~A., \enquote{Probabilistic Sensing Model for Sensor Placement Optimization Based on Line-of-Sight Coverage,} \emph{IEEE Transactions on Instrumentation and Measurement}, Vol.~62, No.~2, 2013, pp. 293--303.
\newblock \doi{10.1109/TIM.2012.2214952}.

\bibitem[{Pierson et~al.(2017)Pierson, Figueiredo, Pimenta, and Schwager}]{10.1177/0278364916688103}
Pierson, A., Figueiredo, L.~C., Pimenta, L.~C., and Schwager, M., \enquote{Adapting to sensing and actuation variations in multi-robot coverage,} \emph{Int. J. Rob. Res.}, Vol.~36, No.~3, 2017, p. 337–354.
\newblock \doi{10.1177/0278364916688103}.

\bibitem[{Aubin and Frankowska(2009)}]{AubinFrankowska1990}
Aubin, J.-P., and Frankowska, H., \emph{Set-Valued Analysis}, 1\textsuperscript{st} ed., Modern Birkh{\"a}user Classics, Birkh{\"a}user Boston, Boston, MA, 2009.
\newblock \doi{10.1007/978-0-8176-4848-0}.

\bibitem[{Puterman(1994)}]{10.5555/528623}
Puterman, M.~L., \emph{Markov Decision Processes: Discrete Stochastic Dynamic Programming}, 1\textsuperscript{st} ed., John Wiley \& Sons, Inc., USA, 1994.

\bibitem[{Billingsley(1999)}]{Billingsley1999}
Billingsley, P., \emph{Convergence of Probability Measures}, 2\textsuperscript{nd} ed., Wiley Series in Probability and Statistics, John Wiley \& Sons, 1999.
\newblock \doi{10.1002/9780470316962}.

\bibitem[{Billingsley(2002)}]{billingsley2002low}
Billingsley, J.~B., \emph{Low-Angle Radar Land Clutter: Measurements and Empirical Models}, Institution of Engineering and Technology, London, UK, 2002.

\bibitem[{Ward et~al.(2013)Ward, Tough, and Watts}]{ward2013seaclutter}
Ward, K., Tough, R., and Watts, S., \emph{Sea Clutter: Scattering, the K Distribution and Radar Performance}, 2\textsuperscript{nd} ed., Radar, Sonar and Navigation, The Institution of Engineering and Technology, 2013.

\bibitem[{Krzywdzi{\'{n}}ski and Rybarczyk(2011)}]{10.1007/978-3-642-22993-0_49}
Krzywdzi{\'{n}}ski, K., and Rybarczyk, K., \enquote{Geometric Graphs with Randomly Deleted Edges - Connectivity and Routing Protocols,} \emph{Mathematical Foundations of Computer Science 2011}, edited by F.~Murlak and P.~Sankowski, Springer Berlin Heidelberg, Berlin, Heidelberg, 2011, pp. 544--555.

\bibitem[{Seymour and Welsh(1978)}]{SEYMOUR1978227}
Seymour, P., and Welsh, D., \enquote{Percolation Probabilities on the Square Lattice,} \emph{Advances in Graph Theory}, Annals of Discrete Mathematics, Vol.~3, edited by B.~Bollobás, Elsevier, 1978, pp. 227--245.
\newblock \doi{https://doi.org/10.1016/S0167-5060(08)70509-0}.

\end{thebibliography}
\end{document}